\documentclass{article}

\addtolength{\textwidth}{55pt}
\addtolength{\textheight}{20pt}
\addtolength{\oddsidemargin}{-28pt}
\addtolength{\evensidemargin}{-28pt}
\addtolength{\headheight}{-25pt}

\usepackage{amsfonts}
\usepackage{amsmath}
\usepackage{amssymb}
\usepackage{wasysym}
\usepackage{yfonts}  
\usepackage{amsthm}
\usepackage{amscd}   
\usepackage[all]{xypic}
\usepackage{mathrsfs}
\usepackage{setspace}

\CompileMatrices

\newtheorem{theorem}{Theorem}[section]
\newtheorem{lemma}[theorem]{Lemma}
\newtheorem{proposition}[theorem]{Proposition}
\newtheorem{corollary}[theorem]{Corollary} 
\newtheorem*{definition}{Definition}
\newtheorem{example}[theorem]{Example}

\newtheorem*{notation}{Notation and terminology}
\newtheorem*{acknowledgement}{Acknowledgements}
\newtheorem*{thmA}{Theorem A}
\newtheorem*{thmB}{Theorem B}

\newtheorem*{propA}{Proposition A}

\numberwithin{equation}{section}

\date{}

\begin{document}

\title{Torsion cohomology for solvable groups of finite rank}

\author{\sc Karl Lorensen}

\maketitle

\vspace{-20pt}

\centerline{\footnotesize Department of Mathematics and Statistics}
\begin{spacing}{.5}

\centerline{\footnotesize Pennsylvania State University, Altoona College}

\centerline{\footnotesize Altoona, PA 16601}

\centerline{\footnotesize USA}
\vspace{5pt}

\centerline{\footnotesize E-mail address: {\tt kql3@psu.edu}}

\end{spacing}
\begin{abstract}  We define a class $\mathcal{U}$ of solvable groups of finite abelian section rank which includes all such groups that are virtually torsion-free as well as those that are finitely generated.  Assume that $G$ is a group in $\mathcal{U}$ and $A$ a $\mathbb ZG$-module. If $A$ is $\mathbb Z$-torsion-free and has finite $\mathbb Z$-rank,  we stipulate a condition on $A$ that guarantees that $H^n(G,A)$ and $H_n(G,A)$ must be finite for $n\geq 0$.  Moreover, if the underlying abelian group of $A$ is a \v{C}ernikov group,  we identify a similar condition on $A$ that ensures that $H^n(G,A)$ must be a \v{C}ernikov group for all $n\geq 0$. 
\vspace{10pt}

\noindent {\bf Mathematics Subject Classification (2010)}:  20F16, 20J06 
\end{abstract}
\vspace{20pt}

\section{Introduction}

This paper continues the study of the homology and cohomology of solvable groups of finite abelian section rank begun by P. H. Kropholler and the author in \cite{kroploren}. As in the previous paper, we will use the shorter term {\it solvable group of finite rank} to refer to such a group. 
For a virtually torsion-free solvable group $G$ of finite rank and a $\mathbb ZG$-module $A$ that is torsion-free and has finite rank as an abelian group, the article \cite{kroploren} identifies a condition on $A$ that ensures that $H^n(G,A)$ and $H_n(G,A)$ are finite. The condition stipulates that $A$ must not possess a nonzero submodule whose underlying abelian group has a spectrum contained in that of $G$.  We remind the reader that the {\it spectrum} of a group is the set of primes for which the group has a quasicyclic section. Moreover, if $\pi$ is a set of primes, then a group is said to be {\it $\pi$-spectral} if its spectrum is contained in $\pi$. 

Rather than just focusing on groups that are virtually torsion-free, the present paper considers the following, larger class of solvable groups of finite rank.

\begin{definition}{\rm  Let $\mathcal{U}$ be the class of solvable groups $G$ of finite rank such that $G$ contains a nilpotent normal subgroup $N$ with the following two properties:
\vspace{5pt}

\noindent (i) The quotient $Q=G/N$ is finitely generated and virtually abelian.
\vspace{5pt}

\noindent (ii) If $T$ is the torsion subgroup of $N$ and $\pi$ the spectrum of $G$, then there is a $G$-invariant, $N$-central series in $T$  of finite length in which each infinite factor, viewed as a $\mathbb ZQ$-module, can be expressed as a quotient of a $\mathbb ZQ$-module that is torsion-free and $\pi$-spectral as an abelian group.} 

\end{definition}

\noindent Prominent examples of groups that belong to $\mathcal{U}$ are all finitely generated solvable groups of finite rank (Proposition 3.9).

Our first aim is to establish the following generalization of the main result in \cite{kroploren}. 

\begin{thmA} Let $G$ be a group in $\mathcal{U}$ with spectrum $\pi$. Assume that $A$ is a $\mathbb ZG$-module whose underlying abelian group is torsion-free and has finite rank. Suppose further that $A$ does not have any nontrivial $\mathbb ZG$-submodules that are $\pi$-spectral as abelian groups. Then $H^n(G,A)$ and $H_n(G,A)$ are finite 
for all $n\geq 0$. 

\end{thmA}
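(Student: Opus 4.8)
The plan is to derive Theorem~A from its virtually torsion-free case, which is the main result of \cite{kroploren}, by factoring out the torsion subgroup $T$ of the nilpotent normal subgroup $N$ provided by the membership of $G$ in $\mathcal{U}$. Write $\bar G=G/T$. Since $N/T$ is torsion-free nilpotent of finite rank and $\bar G/(N/T)\cong Q$ is finitely generated and virtually abelian, $\bar G$ is a virtually torsion-free solvable group of finite rank (compare Section~3), and its spectrum is contained in $\pi$ because a quasicyclic section of $\bar G$ is one of $G$. Condition~(ii) of the definition of $\mathcal{U}$, together with the closure of the class of \v{C}ernikov groups under extension, shows moreover that $T$ is itself a \v{C}ernikov group whose spectrum is contained in $\pi$.

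The first step is to set up the two Lyndon--Hochschild--Serre spectral sequences of the extension $1\to T\to G\to\bar G\to1$,
\[
E_2^{p,q}=H^p\bigl(\bar G,H^q(T,A)\bigr)\ \Longrightarrow\ H^{p+q}(G,A),
\qquad
E^2_{p,q}=H_p\bigl(\bar G,H_q(T,A)\bigr)\ \Longrightarrow\ H_{p+q}(G,A).
\]
For each fixed $n$ the groups $H^n(G,A)$ and $H_n(G,A)$ carry finite filtrations whose successive quotients are subquotients of the finitely many terms with $p+q=n$; hence it suffices to prove that $H^p(\bar G,H^q(T,A))$ and $H_p(\bar G,H_q(T,A))$ are finite for all $p$ and $q$. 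To control the coefficient modules, I would first observe that, $T$ being a torsion group and $A$ embedding $T$-equivariantly in the finite-dimensional $\mathbb{Q}$-vector space $A\otimes_{\mathbb{Z}}\mathbb{Q}$, the action of $T$ on $A$ factors through a finite quotient; in particular the divisible part $T_0$ of $T$---a finite direct sum of quasicyclic $p$-groups with $p\in\pi$---acts trivially on $A$.

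The row $q=0$ is then disposed of by \cite{kroploren}: $H^0(T,A)=A^T$ is a $\mathbb{Z}G$-submodule of $A$, hence torsion-free of finite rank and, like $A$, devoid of $\pi$-spectral $\mathbb{Z}G$-submodules and so of $\operatorname{spec}(\bar G)$-spectral $\mathbb{Z}\bar G$-submodules; thus $H^p(\bar G,A^T)$ is finite for every $p$. For the homological row $q=0$ one notes that the norm map of the finite quotient through which $T$ acts produces an embedding $A^T\hookrightarrow H_0(T,A)=A_T$ with finite cokernel, so that, the homology of $\bar G$ with finite coefficients being finite in each degree, $H_p(\bar G,A_T)$ is finite as well.

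The bulk of the work---and the step I expect to be the principal obstacle---concerns the rows $q\ge1$. These coefficient modules need not have finite $\mathbb{Z}$-rank; already $\operatorname{Ext}^1_{\mathbb{Z}}(\mathbb{Z}(p^\infty),\mathbb{Z})$ is the group of $p$-adic integers, so \cite{kroploren} cannot be quoted for them directly. I would compute $H^q(T,A)$ and $H_q(T,A)$ by way of the Lyndon--Hochschild--Serre spectral sequence of $1\to T_0\to T\to T/T_0\to1$, noting that the finite group $T/T_0$ contributes only cohomology of bounded exponent and that $H^{\ast}(T_0,A)$ and $H_{\ast}(T_0,A)$ may be evaluated from a K\"unneth argument over the quasicyclic direct factors of $T_0$ together with the universal coefficient theorem, which applies because $T_0$ acts trivially. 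This reduces Theorem~A to showing that $H^p(\bar G,M)$ and $H_p(\bar G,M)$ are finite for the finitely many $\mathbb{Z}\bar G$-modules $M$ that arise, each of which is obtained from $A$ by applying $\operatorname{Hom}_{\mathbb{Z}}(-,A)$, $\operatorname{Ext}^1_{\mathbb{Z}}(-,A)$, $(-)\otimes_{\mathbb{Z}}A$ or $\operatorname{Tor}^{\mathbb{Z}}_1(-,A)$ to the integral homology of $T_0$ (and by then passing to subquotients under the finite group $T/T_0$), the action of $\bar G$ on $M$ being that induced, up to a twist, from the action on $A$. The crux is to establish exactly this: that the hypothesis that $A$ has no $\pi$-spectral $\mathbb{Z}G$-submodule forces each such $M$, regarded as a module over the virtually torsion-free group $\bar G$, to have finite cohomology in all degrees. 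I anticipate that this requires extending the methods of \cite{kroploren} to coefficient modules that are pro-$p$-like rather than of finite rank, and that one must also keep track of the $\varprojlim^1$-terms that appear when $T_0$ and $T$ are written as directed unions of finite subgroups. Granting this, all of the terms $E_2^{p,q}$ and $E^2_{p,q}$ are finite, and Theorem~A follows.
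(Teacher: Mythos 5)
Your proposal takes a genuinely different route from the paper, and unfortunately the difference is precisely where the difficulty lies, so that the gap you acknowledge near the end is not a minor detail that can be waved through but the whole content of the theorem.

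You split off the torsion subgroup $T$ of $N$ and work with the extension $1\to T\to G\to G/T\to 1$, so that the quotient $\bar G=G/T$ is virtually torsion-free and you can hope to quote the main result of \cite{kroploren} for it. The preliminary reductions (that $T$ is \v{C}ernikov with spectrum in $\pi$, that its divisible part $T_0$ acts trivially on the torsion-free $A$, that the row $q=0$ gives torsion-free coefficients of finite rank with no $\pi$-spectral submodules) are all sound. But for $q\geq 1$ the coefficient modules $H^q(T,A)$ are, as you yourself observe, of the shape $\operatorname{Ext}^1_{\mathbb Z}(\mathbb Z_{p^\infty},A)$, i.e.\ finitely generated $\hat{\mathbb Z}_p$-modules, and these have infinite $\mathbb Z$-rank. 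Theorem~A of \cite{kroploren} is stated only for coefficient modules that are torsion-free of finite rank as abelian groups, so it simply does not apply to $H^q(\bar G, H^q(T,A))$. Moreover, there is no reason in general for $H^p(\bar G,M)$ to be finite when $M\cong\hat{\mathbb Z}_p$: if $\bar G$ acted trivially on such an $M$ (as can happen when the diagonal action cancels), $H^0$ and $H^1$ would already be uncountable, and the hypothesis that $A$ has no $\pi$-spectral submodule does not transparently control the $\bar G$-action on these $\operatorname{Ext}$-groups. The ``extension of the methods to pro-$p$-like coefficients'' you invoke, together with the bookkeeping of $\varprojlim^1$ terms, is not a technicality -- it is exactly the part of the argument you have not supplied, and I do not see how to supply it along these lines.

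The paper avoids the issue by not separating $T$ from $N$ at all. It uses the LHS spectral sequence for $1\to N\to G\to Q\to 1$ with $Q$ finitely generated and virtually abelian (after passing to a finite-index subgroup $G_0$ with $Q_0=G_0/N$ abelian and arranging $N<C_G(A)$), then rewrites $H^p(Q_0,H^q(N,A))$ as $\operatorname{Ext}^p_{\mathbb Z Q_0}(H_qN,A)$ via Corollary~2.4 and the universal coefficient theorem, so that the problem becomes one about $\operatorname{Ext}$ over the group ring of an abelian group. The decisive input is then Corollary~3.8 -- that $H_qN$ lies in the class $\mathfrak{C}(Q_0,\pi)$, proved via Breen's results on the integral homology of abelian groups -- combined with Lemma~3.3, which bounds $\operatorname{Ext}^p_{\mathbb Z Q_0}(\cdot,A)$ for modules in that class. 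This is the machinery that replaces what you would need to control the infinite-rank coefficients, and it is not interchangeable with a citation of the virtually torsion-free case after factoring out $T$. You should also be aware that the paper's argument requires a further induction on $h(A)$ to reduce to the rationally irreducible case, using a semidirect-product trick $\Gamma=C\rtimes G$ to derive a contradiction from a hypothetical $\pi$-spectral submodule of $A/B$; your sketch omits any reduction of this type.
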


To prove Theorem A, we employ the same reasoning that is used for the virtually torsion-free case in \cite{kroploren}. The argument is based on the Lyndon-Hochschild-Serre (LHS) spectral sequences for cohomology and homology for the group extension $1\rightarrow N\rightarrow G\rightarrow Q\rightarrow 1$, where $N$ and $Q$ are as related in the definition of $\mathcal{U}$.  Moreover, in order to study the initial pages of these spectral sequences, we apply [{\bf 3}, Proposition A], describing a property of the functors ${\rm Ext}^n_{\mathbb Z\Gamma}$ and ${\rm Tor}_n^{\mathbb Z\Gamma}$ when $\Gamma$ is an abelian group. The success of this approach hinges on the $\mathbb ZQ$-module structure of $H_nN$. What is required is that $H_nN$ belongs to $\mathcal{C}(Q,\pi)$, where $\mathcal{C}(Q,\pi)$ is the class of $\mathbb ZQ$-modules that can be constructed from modules that are virtually torsion-free of finite rank and $\pi$-spectral by forming finitely many quotients and extensions. The main theorem in \cite{kroploren} is established by showing that $H_nN$ belongs to $\mathcal{C}(Q,\pi)$ if $N$ is torsion-free.  In the present article, we invoke L. Breen's results from \cite{breen} concerning the integral homology of abelian groups in order to show that $H_nN$ always lies in $\mathcal{C}(Q,\pi)$ if $G$ belongs to $\mathcal{U}$ (Corollary 3.8). 

The second, and primary, purpose of this article is to examine the situation where the underlying additive group of $A$ is torsion; more specifically, we will suppose that $A$ is a {\it \v{C}ernikov group}, meaning that it is a finite extension of a direct sum of finitely many quasicyclic groups. We wish to specify a condition, analogous to the one above for torsion-free modules, that guarantees that $H^n(G,A)$ is a \v{C}ernikov group for all $n\geq 0$.
The precise result that we prove is Theorem B below. 

\begin{thmB}  Let $G$ be a group in $\mathcal{U}$ with spectrum $\pi$. Assume that $A$ is a $\mathbb ZG$-module whose underlying abelian group is a \v{C}ernikov $\omega$-group for a set of primes $\omega$. Suppose further that $A$ fails to have an infinite $\mathbb ZG$-submodule that is a quotient of a $\mathbb ZG$-module that is torsion-free of finite rank and $\pi$-spectral as an abelian group. Then, for all $n\geq 0$, $H^n(G,A)$ is a \v{C}ernikov $\omega$-group. 
\end{thmB}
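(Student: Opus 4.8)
The strategy is to exploit the Lyndon--Hochschild--Serre cohomology spectral sequence of the extension $1\to N\to G\to Q\to1$ supplied by the definition of $\mathcal U$,
\[
E_2^{p,q}=H^p\bigl(Q,H^q(N,A)\bigr)\ \Longrightarrow\ H^{p+q}(G,A),
\]
all of whose terms are abelian groups. The class of abelian \v{C}ernikov $\omega$-groups is closed under subgroups, quotients and extensions, and a first-quadrant spectral sequence has only finitely many entries on each antidiagonal $p+q=n$; so it is enough to prove that each $E_2^{p,q}$ is an abelian \v{C}ernikov $\omega$-group. This breaks into two parts. The first is formal and uses nothing about $A$: whenever $M$ is a \v{C}ernikov $\omega$-group regarded as a $\mathbb ZQ$-module, $H^p(Q,M)$ is again a \v{C}ernikov $\omega$-group. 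Indeed, being finitely generated and virtually abelian, $Q$ has a finite-index normal subgroup isomorphic to $\mathbb Z^d$; the Lyndon--Hochschild--Serre spectral sequence for it reduces the assertion to the facts that the finite Koszul resolution of $\mathbb Z^d$ presents $H^t(\mathbb Z^d,M)$ as a subquotient of a finite direct sum of copies of $M$, and that $H^s$ of a finite group takes \v{C}ernikov $\omega$-modules to \v{C}ernikov $\omega$-groups.

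Thus everything reduces to the second part: \emph{for every $q\ge0$ the group $H^q(N,A)$ is an abelian \v{C}ernikov $\omega$-group.} I would first strip off the finite part of $A$. Its maximal divisible subgroup $D$ is a $\mathbb ZG$-submodule, $F:=A/D$ is finite, and the long exact cohomology sequence reduces the claim to $D$ and $F$ separately. For the finite module $F$ the groups $H^q(N,F)$ are finite, $N$ being a minimax group (it is nilpotent of finite rank) and such groups having finite cohomology with finite coefficients; they are $\omega$-groups since $F$ is. The divisible module $D$ still satisfies the hypothesis of the theorem, so it remains to show that each $H^q(N,D)$ is a \v{C}ernikov $\omega$-group. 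For this I would descend along the upper central series of $N$, bringing in for the torsion part the $G$-invariant $N$-central series of condition~(ii) in the definition of $\mathcal U$, so as to reduce by iterated Lyndon--Hochschild--Serre spectral sequences to the case that $N$ is abelian. Since $D$ is divisible, hence $\mathbb Z$-injective, the passage from the integral homology of $N$ to $H^q(N,D)$ is --- via [{\bf 3}, Proposition A] on the functors $\mathrm{Ext}^{*}_{\mathbb ZN}$ and $\mathrm{Tor}^{\mathbb ZN}_{*}$ when $N$ is abelian --- essentially the functor $\mathrm{Hom}_{\mathbb Z}(-,D)$ applied to the constituents of $H_*N$; by Corollary~3.8 these lie in $\mathcal C(Q,\pi)$, and L.~Breen's calculations in \cite{breen} render them explicit enough to run the estimates. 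The hypothesis on $A$ enters exactly here: it is precisely the condition forbidding a constituent of $H_*N$ --- a torsion-free $\mathbb ZQ$-module of finite rank that is $\pi$-spectral as an abelian group --- to map onto a divisible submodule of $D$; such a map is what would manufacture a $p$-adic Tate module inside $H^q(N,D)$, the typical non-\v{C}ernikov obstruction (compare $H^1(\mathbb Z[1/p],C_{p^\infty})\cong\mathbb Z_p$, where $C_{p^\infty}$ is a quotient of the $\pi$-spectral module $\mathbb Z[1/p]$).

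I expect the abelian case just outlined to be the main obstacle, and within it the control of the non-finitely generated torsion-divisible constituents of $H_*N$: concretely, one has to verify that for a torsion-free finite-rank $\pi$-spectral $\mathbb ZQ$-module $V$ and a divisible \v{C}ernikov $\omega$-group $D$ meeting the hypothesis, the group $\mathrm{Hom}_{\mathbb Z}(V,D)$, with its induced $\mathbb ZQ$-action, is itself a \v{C}ernikov $\omega$-group --- in other words that the $\mathbb Z_p$-summands occurring in the absence of the hypothesis do not appear. Granting this, the rest is assembly: carrying the right module classes through the several spectral sequences and appealing to the closure properties of \v{C}ernikov $\omega$-groups. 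Throughout one must keep careful track of the two prime sets in play --- the spectrum $\pi$ of $G$, controlling the homology of $N$ and the hypothesis, and the set $\omega$ of primes occurring in $A$, in terms of which the conclusion is phrased.
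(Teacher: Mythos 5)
The reduction on which your argument hinges---that it suffices to prove $H^q(N,A)$ is a \v{C}ernikov $\omega$-group for every $q$---is not valid, and this is the crux of the difficulty. When $N$ acts trivially on a divisible $A$, the universal coefficient theorem gives $H^q(N,A)\cong\mathrm{Hom}_{\mathbb Z}(H_qN,A)$, and whether $\mathrm{Hom}_{\mathbb Z}(V,A)$ is \v{C}ernikov (for $V$ a torsion-free finite-rank constituent of $H_*N$) is a purely abelian-group-theoretic question: it is \v{C}ernikov if and only if $\mathrm{spec}(V)\cap\omega=\emptyset$, since a $\mathbb Z_{p^\infty}$-section of $V$ with $p\in\omega$ contributes a $\hat{\mathbb Z}_p$-summand. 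The hypothesis of the theorem, by contrast, is about $\mathbb ZG$-\emph{module} quotients of $V$ mapping onto submodules of $A$; it in no way forbids an abelian-group surjection $V\to\mathbb Z_{p^\infty}$ that fails to be $\mathbb ZQ$-linear. So your key lemma---that under the hypothesis $\mathrm{Hom}_{\mathbb Z}(V,D)$ is \v{C}ernikov---is false as stated, and the inner--outer splitting of the spectral sequence into $H^p(Q,-)$ applied to $H^q(N,A)$ loses exactly the information needed.

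What rescues the argument in the paper is that the entire $E_2$-term $H^p(Q_0,H^q(N,A))$ must be treated as a single object, identified via the universal coefficient theorem and Corollary 2.4 with $\mathrm{Ext}^p_{\mathbb ZQ_0}(H_qN,A)$, and it is only \emph{this} $\mathrm{Ext}$-group over the group ring $\mathbb ZQ_0$ that can be shown \v{C}ernikov, using Proposition A (via Lemma 3.4). Proposition A in turn uses Pontryagin duality and the torsion-free version Proposition 2.1 precisely to exploit the $Q_0$-action, which is invisible in $\mathrm{Hom}_{\mathbb Z}(V,A)$ alone: the point is that $H^p(Q_0,-)$ of a non-\v{C}ernikov module can still be \v{C}ernikov when the $Q_0$-action is incompatible enough. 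Two further ingredients of the actual proof are also missing from your sketch: the transfer of the hypothesis on $A$ from $\mathbb ZG$-submodules to $\mathbb ZG_0$-submodules for finite-index $G_0$ (Lemmas 4.2--4.3), without which passage to a subgroup of finite index is illegal; and the induction on $c(A)$ with the near-splitting argument via $H^2(\Gamma/B,B)$ being torsion and Proposition 2.2, which handles the case when $A$ has infinite proper submodules. Your reduction ``strip off the maximal divisible subgroup'' is a far coarser decomposition and does not interact correctly with the hypothesis on submodules.
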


\noindent Observe that the conclusion that $H^n(G,A)$ is a \v{C}ernikov group is the best possible outcome under the hypotheses of Theorem B: very easy examples reveal that the cohomology groups may not be finite. Also, there is no mention of homology in the theorem because $H_n(G,A)$ will always be a \v{C}ernikov $\omega$-group if $A$ is a \v{C}ernikov $\omega$-group and $G$ is solvable of finite rank. 

The proof of Theorem B, like that of its predecessor, draws extensively upon the techniques and findings of \cite{kroploren}. In addition, a key role is played by a theorem of J.C. Lennox and D. J. S. Robinson on the cohomology of nilpotent groups. The latter result enables us to reduce Theorem B to the case where $A$ lacks any infinite proper submodules and the subgroup $N$ from the definition of $\mathcal{U}$ acts trivially on $A$. It is shown in \cite{kroploren} that, provided $A$ is infinite, the triviality of the action of $N$ allows us to express the $E_2$-page of the LHS cohomology spectral sequence for $1\rightarrow N\rightarrow G\rightarrow Q\rightarrow 1$  as $E_2^{pq}={\rm Ext}^p_{\mathbb ZQ}(H_qN,A)$. As a result, we can obtain the conclusion of Theorem B by establishing that ${\rm Ext}^p_{\mathbb ZQ}(H_qN,A)$ is \v{C}ernikov. To accomplish this, we identify the following property of modules over integral group rings of finitely generated abelian groups. 

\begin{propA} Let $G$ be a finitely generated abelian group. Assume that $A$ is a $\mathbb ZG$-module that has finite rank {\it qua} abelian group. Suppose further that $B$ is a $\mathbb ZG$-module whose underlying additive group is a \v{C}ernikov $\omega$-group for a set of primes $\omega$. If $A$ fails to have an infinite $\mathbb ZG$-module quotient that is isomorphic to a quotient of $B$, then, for $n\geq 0$,  ${\rm Ext}_{\mathbb ZG}^n(A,B)$ is a \v{C}ernikov $\omega$-group.
\end{propA}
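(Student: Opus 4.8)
The plan is to prove this by induction on the torsion-free rank of $G$ (equivalently the rank of the maximal free abelian quotient), and within each rank, by a further reduction on the structure of $B$ and $A$. Let me sketch the pieces.

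First I would reduce $B$ to the case of a quasicyclic group $C_{p^\infty}$ for a single prime $p \in \omega$. Since $B$ is Černikov, it has a finite submodule-free... wait, more carefully: the divisible part $D$ of $B$ is a finite direct sum of quasicyclic groups and has finite index. The finite quotient $B/D$ contributes only finite Ext groups (a finite module has, over $\mathbb{Z}G$ with $G$ finitely generated abelian, finitely generated—hence finite, being torsion—Ext from any module of finite rank? one needs care, but $\mathrm{Ext}^n_{\mathbb{Z}G}(A, F)$ for $F$ finite is cofinitely generated... actually I would instead argue that a finite module is both a subquotient issue handled by the long exact sequence). So by the long exact sequence in the second variable applied to $0 \to D \to B \to B/D \to 0$, it suffices to treat $B = D$ divisible. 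Then $D$ decomposes as a direct sum over primes $p \in \omega$ of its $p$-primary parts, each a $\mathbb{Z}G$-submodule, so one reduces to $B$ being $p$-divisible $p$-primary Černikov, i.e. a finite direct sum of copies of $C_{p^\infty}$ as abelian group. The hypothesis "$A$ has no infinite quotient isomorphic to a quotient of $B$" is inherited appropriately along these reductions (a quotient of a submodule/quotient of $B$ is still controlled), though checking this bookkeeping is one of the fiddly points.

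Next, the heart of the matter: with $B$ now $p$-primary divisible of finite rank, I would use the structure theory of $\mathbb{Z}G$-modules of finite rank over a finitely generated abelian group $G$ — the kind of arguments appearing in \cite{kroploren} and in the work of Robinson on finite-rank solvable groups. The key classical fact is that if $M$ is a $\mathbb{Z}G$-module of finite rank with $G$ finitely generated abelian, then $M$ has a finite series whose factors are, up to finite subquotients, rationally irreducible, and each rationally irreducible factor of finite rank is either (a) $\pi$-spectral-type, or more to the point here, is a module on which $G$ acts through a finite-rank concrete representation over a number field or over $\mathbb{F}_p$-type rings. I would dévissage $A$ along such a series: by the long exact sequence in the first variable, it suffices to prove the statement when $A$ is rationally irreducible (over $\mathbb{Q}G$) of finite rank. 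The dichotomy is then: either the prime $p$ is "invisible" to $A$ in the sense that multiplication by $p$ is an automorphism of $A$ (then $\mathrm{Ext}^n_{\mathbb{Z}G}(A,B)$ is $p$-divisible and one can push through), or $A$ has a nontrivial mod-$p$ reduction, in which case the "no infinite quotient in common with $B$" hypothesis forces the interaction to be finite.

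The main obstacle I expect is precisely this last dichotomy — making rigorous the statement that the hypothesis on quotients of $A$ controls $\mathrm{Ext}$ in all degrees, not just degree $0$. In degree $0$, $\mathrm{Hom}_{\mathbb{Z}G}(A,B)$ being Černikov is more or less a restatement of the hypothesis plus the fact that $\mathrm{Hom}$ into a Černikov module from a finite-rank module lands in a Černikov group once the "common quotient" obstruction is removed. For higher $n$ one wants to run an induction on the rank of $G$: pick $x \in G$ generating a $\mathbb{Z}$-direct-summand, set $G_0 = G/\langle x \rangle$ (or a finite-index analogue), and use the LHS-type / change-of-rings spectral sequence $\mathrm{Ext}^p_{\mathbb{Z}G_0}(\mathrm{something}, \mathrm{Ext}^q_{\mathbb{Z}\langle x\rangle\text{-part}}(A,B)) \Rightarrow \mathrm{Ext}^{p+q}_{\mathbb{Z}G}(A,B)$, or more simply the two-term Koszul-type resolution of $\mathbb{Z}G$ over $\mathbb{Z}G_0$ coming from the central element $x-1$, giving a short exact sequence relating $\mathrm{Ext}^n_{\mathbb{Z}G}(A,B)$ to the kernel and cokernel of $(x-1)$ acting on $\mathrm{Ext}^n_{\mathbb{Z}G_0}(A,B)$. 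The delicate point is verifying that the inductive hypothesis applies to $A$ and $B$ as $\mathbb{Z}G_0$-modules — in particular that the "no common infinite quotient" condition descends, which may require first replacing $A$ by the factors of a suitable filtration and absorbing the element $x$ into the coefficients. I would therefore organize the proof as: (1) reduce $B$ to $C_{p^\infty}$-powers; (2) reduce $A$ to rationally irreducible via dévissage; (3) handle the $p$-invertible-on-$A$ case directly; (4) in the remaining case, induct on $\mathrm{rk}(G)$ using the central-element short exact sequence, with the base case $\mathrm{rk}(G)=0$ (so $G$ finite) being the statement that $\mathrm{Ext}^n_{\mathbb{Z}G}(A,B)$ is Černikov for $A$ of finite rank and $B$ Černikov over a finite group — which follows since over a finite group $\mathrm{Ext}^n$ is computed by a finite free resolution and a finite-rank-to-Černikov $\mathrm{Hom}$ plus the hypothesis keeps things Černikov.
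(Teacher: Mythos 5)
Your opening reductions match the paper's: reduce to a single prime $p$ in $\omega$, strip off the finite part of $B$ to assume $B$ divisible, and (eventually) reduce to $A$ rationally irreducible. But you never actually prove the rationally irreducible case, and the core mechanism the paper uses there is completely absent from your sketch: Pontryagin duality. After splitting off a finitely generated submodule $A_0 \leq A$ with $A/A_0$ torsion (where $\mathrm{Ext}^n_{\mathbb{Z}G}(A_0,B)$ is handled by Noetherianity of $\mathbb{Z}G$ and Lemma~2.6(ii)), the paper takes the $p$-torsion part $P$ of $A/A_0$, dualizes to obtain finitely generated $\hat{\mathbb{Z}}_pG$-modules $P^\ast$ and $B^\ast$, and uses the isomorphism $\mathrm{Ext}^n_{\mathbb{Z}G}(P,B)\cong\mathrm{Ext}^n_{\hat{\mathbb{Z}}_pG}(B^\ast,P^\ast)$ (Lemma~2.9). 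This converts the Černikov-coefficient problem into a torsion-free-coefficient problem over the PID $\hat{\mathbb{Z}}_p$, where the already-established Proposition~2.1 (the $R$-coefficient generalization of [\textbf{3}, Proposition~A]) applies: if the Ext group were not finite, dualizing back would produce a common infinite quotient of $P$ and $B$, contradicting the hypothesis. Your proposed ``dichotomy'' (whether $p$ acts invertibly on $A$) does not substitute for this; you acknowledge yourself that you don't see how to make it rigorous in positive degrees.

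The other genuine divergence is your proposed induction. You want to induct on the torsion-free rank of $G$ via a Koszul-type short exact sequence for a central generator, and you flag (correctly) that making the ``no common infinite quotient'' hypothesis descend to $G_0=G/\langle x\rangle$ is delicate. The paper sidesteps this entirely: the induction is on $h(A)$, the torsion-free rank of the \emph{module}, not on the rank of $G$. Taking $U\leq A$ with $h(U)$ maximal subject to $h(U)<h(A)$, the rationally irreducible case handles $A/U$, and the nontrivial step is showing the hypothesis passes to $U$. This is done not by bookkeeping along a change-of-rings sequence but by the near-splitting criterion (Proposition~2.2): if $U$ had a quotient isomorphic to a nontrivial quotient $W$ of $B$, then since $\mathrm{Ext}^1_{\mathbb{Z}G}(A/U,U/V)$ is torsion, $A$ itself would acquire (after absorbing a finite subquotient via Lemma~2.5) a nontrivial quotient isomorphic to a quotient of $W$, contradicting the original hypothesis. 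This near-splitting argument is the device that resolves exactly the difficulty you identified but left open, and it has no counterpart in your outline.

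So: the reduction steps are right, but the two load-bearing ideas --- duality to $\hat{\mathbb{Z}}_pG$ plus the torsion-free Ext-boundedness result, and induction on $h(A)$ with near-splittings to propagate the hypothesis --- are both missing. As written, the proposal does not close either gap.
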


\noindent Similar to the use of [{\bf 3}, Proposition A] in the proof of Theorem A, the application of Proposition A to the $E_2$-page of the spectral sequence described above requires that $H_nN$ lies in the class $\mathcal{C}(Q,\pi)$. 

The plan of the paper is as follows. Proposition A is established  in \S 2; the proof relies on a generalization of [{\bf 3}, Proposition A], as well as an elementary property of Pontryagin duals of modules (Lemma 2.8). In \S 3 we discuss the class $\mathcal{C}(G,\pi)$ for a group $G$ and set of primes $\pi$, proving the pivotal Corollary 3.8. 
 The last section of the article is devoted to the proofs of Theorems A and B, with the former established right at the outset.  The proof of Theorem B demands some further preparation: most importantly, we must show that the condition on the $\mathbb ZG$-submodules of $A$ in the theorem also holds for $\mathbb ZG_0$-submodules, where $G_0$ is any subgroup of finite index (Lemma 4.2). After proving Theorem B and enunciating a corollary, we conclude the paper by presenting two examples that demonstrate the necessity of the hypotheses of the theorem.   

 Below we describe the notation and terminology that we will employ throughout the paper. In addition, we state a proposition summarizing the fundamental structural properties of solvable groups of finite rank. For the proofs of these properties, we refer the reader to \cite{robinson-lennox2}.
\vspace{10pt}

\begin{notation}{\rm

\vspace{5pt}

A {\it section} of a group is a quotient of one of its subgroups. 
\vspace{5pt}

Let $R$ be a ring and $A$ an $R$-module. An {\it $R$-module section} of $A$ is an $R$-module quotient of one of its $R$-submodules.
\vspace{5pt}

Let $R$ be a ring. An $R$-module $A$ is said to be {\it bounded} if there exists $r\in R$ such that $rA=0$. 
\vspace{5pt}

Suppose that $R$ is a principal ideal domain and $A$ an $R$-module. Let $F$ be the field of fractions of $R$. The {\it torsion-free rank} of $A$, denoted $r_0(A)$, is defined by $$r_0(A)={\rm dim}_F(A\otimes_RF).$$ Moreover, for each prime $p$ in $R$, the {\it $p$-rank}, $r_p(A)$, is defined by $$r_p(A)={\rm dim}_{R/pR}({\rm Hom}(R/pR,A)).$$ We say that $A$ has finite $R$-rank if $r_0(A)$ is finite and $r_p(A)$ is finite for every prime $p\in R$. 
\vspace{5pt}

If $p$ is a prime, then $\hat{\mathbb Z}_p$ is the ring of $p$-adic integers and $\mathbb Z_{p^\infty}$ the quasicyclic $p$-group.
\vspace{5pt}

Let $G$ be a group. If $g, x\in G$, then $x^g=g^{-1}xg$. Moreover, for $H<G$ and $g\in G$, $H^g=g^{-1}Hg$. 
\vspace{5pt}

The join of all the nilpotent normal subgroups of a group $G$, known as the {\it Fitting subgroup}, is denoted ${\rm Fitt}(G)$.
\vspace{5pt}

We designate the join of all the torsion normal subgroups of a group $G$ by $\tau(G)$.
\vspace{5pt}

A solvable group is said to have {\it finite rank} if its abelian sections all have finite $\mathbb Z$-rank. 
\vspace{5pt}

If $G$ is a solvable group such that $G/\tau(G)$ has finite rank, then $h(G)$ denotes the {\it Hirsch rank} of $G$, namely, the number of infinite cyclic factors in any series of finite length whose factors are all either infinite cyclic or torsion. By the Schreier refinement theorem, this number is independent of the particular series selected.
\vspace{5pt}

A solvable group is {\it minimax} if it has a series of finite length in which each factor is either cyclic or quasicyclic. If $\pi$ is a set of primes, then a {\it $\pi$-minimax group} is a solvable minimax group that is $\pi$-spectral. 
\vspace{5pt}

If $G$ is a \v{C}ernikov group, then the number of quasicyclic summands in the divisible subgroup of finite index is called the {\it \v{C}ernikov rank} of $G$, written $c(G)$. 
\vspace{5pt}

Let $G$ be a group and $A$ a $\mathbb ZG$-module. If $A\otimes_{\mathbb Z}\mathbb Q$ is a simple $\mathbb QG$-module, then we will refer to $A$ as {\it rationally irreducible}. In other words, $A$ is rationally irreducible if and only if the additive group of $A$ is not torsion and, for every submodule $B$ of $A$, either $B$ or $A/B$ is torsion as an abelian group.
\vspace{5pt}

Assume that $G$ is a group. Let $A$ be a $\mathbb ZG$-module and $B$ a subgroup of the underlying additive group of $A$. We define 

$$C_G(A)=\{g\in G: ga=a \  \ \forall\,  a\in A\}; \ \ \  N_G(B)=\{g\in G: gb\in B\  \ \forall\,  b\in B\}.$$}
\end{notation}
\vspace{10pt}

\begin{proposition} Let $G$ be a solvable group of finite rank. 
\vspace{5pt}

\noindent (i) $G$ is virtually torsion-free if and only if $\tau(G)$ is finite.
\vspace{5pt}

\noindent (ii) If $\tau(G)$ is a \v{C}ernikov group, then ${\rm Fitt}(G)$ is nilpotent and $G/{\rm Fitt}(G)$ virtually abelian. 
\vspace{5pt}

\noindent (iii) If $\tau(G)$ is finite, then $G/{\rm Fitt}(G)$ is finitely generated. 
\vspace{5pt}

\noindent (iv) If $G$ is finitely generated, then $G$ is minimax.  \nolinebreak \hfill\(\square\)

\end{proposition}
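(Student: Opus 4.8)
These four assertions are classical facts in the structure theory of soluble groups of finite rank, and the text defers them to \cite{robinson-lennox2}; a truly self-contained proof would recapitulate a sizeable portion of that development, so my plan is only to indicate the shape of the arguments and to flag where the substantive work lies. The unifying device is induction on the derived length of $G$, each step reducing the assertion to a statement about the abelian sections occurring in the derived (or Fitting) series. The base case rests on the transparent structure of an abelian group $A$ of finite rank: its torsion subgroup $T(A)$ is the direct product of its primary components, each of finite $p$-rank, while $A/T(A)$ embeds in a finite-dimensional $\mathbb{Q}$-vector space; in particular, if $T(A)$ is finite then, writing $e$ for its exponent, $eA$ is a characteristic torsion-free subgroup of finite index in $A$.

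For part (i), the forward implication is immediate: if $G_0$ is a torsion-free subgroup of finite index in $G$, replace it by its normal core; then $\tau(G)\cap G_0$ is simultaneously torsion and torsion-free, hence trivial, so $\tau(G)$ embeds in the finite group $G/G_0$. For the converse I would pass to $\bar G = G/\tau(G)$ and first argue that the torsion subgroup of $\mathrm{Fitt}(\bar G)$ is trivial, the point being that in a soluble group of finite rank the torsion part of the Fitting subgroup is a characteristic, hence normal, subgroup, which here would have to lie inside $\tau$. Then $\mathrm{Fitt}(\bar G)$ is torsion-free of finite rank; the action of $\bar G$ on the finitely many torsion-free abelian factors of a suitable central series of $\mathrm{Fitt}(\bar G)$ embeds $\bar G/\mathrm{Fitt}(\bar G)$ into a product of automorphism groups of finite-rank torsion-free abelian groups, and an induction on rank (or derived length) shows that this quotient, and hence $\bar G$, and hence $G$, is virtually torsion-free.

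Parts (ii) and (iii) I would treat together. The Fitting subgroup of any group is locally nilpotent, since a product of two nilpotent normal subgroups is nilpotent and $\mathrm{Fitt}(G)$ is therefore a directed union of nilpotent subgroups. When $G$ is soluble of finite rank with $\tau(G)$ \v{C}ernikov, the torsion subgroup of $\mathrm{Fitt}(G)$ is characteristic in $\mathrm{Fitt}(G)$, hence normal in $G$, hence contained in $\tau(G)$ and thus \v{C}ernikov; from this, together with the known description of locally nilpotent groups of finite rank in \cite{robinson-lennox2} (in particular, torsion-free locally nilpotent groups of finite rank are nilpotent), one obtains that $\mathrm{Fitt}(G)$ is nilpotent. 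For the quotient, I would refine $\mathrm{Fitt}(G)$ by a $G$-invariant series with abelian factors of finite rank and use the fact that in a soluble group the Fitting subgroup is self-centralizing modulo such a series; this confines $G/\mathrm{Fitt}(G)$ to the automorphism groups of finitely many abelian groups of finite rank, and the structure theory of soluble linear groups over $\mathbb{Q}$ and of automorphism groups of \v{C}ernikov groups shows that a soluble group of finite rank sitting there is virtually abelian (this is (ii)) and, when $\tau(G)$ is moreover finite, finitely generated (this is (iii)).

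Part (iv) is the deepest, being Robinson's theorem that a finitely generated soluble group of finite rank is minimax, and the main obstacle of the whole proposition. I would use (i)--(iii) to reduce it to showing that $\mathrm{Fitt}(G)$ is minimax: by (iii), $G/\mathrm{Fitt}(G)$ is finitely generated and virtually abelian, hence polycyclic, hence minimax, and an extension of a minimax group by a minimax group is minimax. To see that $\mathrm{Fitt}(G)$ is minimax, one notes that the abelian factors of its lower central series are finitely generated as modules over $\mathbb{Z}[G/\mathrm{Fitt}(G)]$, that this group ring is virtually Noetherian, and that a finitely generated module of finite $\mathbb{Z}$-rank over such a ring is minimax as an abelian group; finitely many extensions then finish the job. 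The crux --- here and already in (iii) --- is the control of the set of primes that can occur, since a priori an abelian section of finite rank can involve infinitely many primes (witness $\mathbb{Q}$ or $\bigoplus_p \mathbb{Z}/p\mathbb{Z}$, both of rank one): it is precisely finite generation, via the Noetherian group ring or via the finiteness of $\tau(G)$, that forces this set to be finite, which is what makes the minimax and finite-generation conclusions possible.
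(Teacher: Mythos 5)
The paper offers no proof of this proposition: it records these structural facts as background and defers entirely to \cite{robinson-lennox2}, so there is no argument of the paper's own against which to compare yours. Your decision to sketch rather than fully reconstruct the Lennox--Robinson development is therefore reasonable, and your outlines of (i), (ii), and (iii) follow the standard lines of that development: the torsion radical of $\mathrm{Fitt}(G)$ is characteristic, hence normal, hence lies in $\tau(G)$; torsion-free locally nilpotent groups of finite rank are nilpotent; and $G/\mathrm{Fitt}(G)$ is controlled by its action on the abelian sections of a suitable series in $\mathrm{Fitt}(G)$.

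Your treatment of (iv), however, contains a circularity that you do not resolve. You propose to invoke (iii) to conclude that $G/\mathrm{Fitt}(G)$ is finitely generated and virtually abelian, hence polycyclic. But (iii) carries the hypothesis that $\tau(G)$ is finite, and you have not established that a finitely generated solvable group of finite abelian section rank has finite $\tau(G)$; indeed that finiteness is essentially part of the content of Robinson's minimax theorem rather than a prerequisite for it, and it is precisely what obstructs a naive induction. Your closing paragraph correctly names the crux (bounding the set of primes that can occur in the abelian sections), but the proposed route through (iii) does not actually circumvent it. The genuine proof in \cite{robinson-lennox2} must establish the spectrum control directly, typically by exhibiting chief factors as finitely generated modules over (virtually) polycyclic group rings and invoking properties of such modules, without presuming $\tau(G)$ finite in advance. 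As written, then, your sketch of (iv) would not close up into a proof without first supplying that missing ingredient.
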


\section{Proposition A}

Proposition A is based on the following more general formulation of [{\bf 3}, Proposition A]. The original result was stated for the case $R=\mathbb Z$, but the proof applies to any principal ideal domain satisfying the finiteness condition given. 

\begin{proposition}{\rm (Kropholler, Lorensen, and Robinson)} Let $G$ be an abelian group and $R$ a principal ideal domain such that $R/Ra$ is finite for every  nonzero element $a$ of $R$. Assume that $A$ and $B$ are $RG$-modules that are $R$-torsion-free and have finite $R$-rank. Suppose further that $A$ fails to contain a nontrivial $RG$-submodule that is isomorphic to a submodule of $B$. Then  ${\rm Ext}_{RG}^n(A,B)$ and ${\rm Tor}_n^{RG}(A,B)$ are bounded $R$-modules
for all $n\geq 0$.  \nolinebreak \hfill\(\square\)
\end{proposition}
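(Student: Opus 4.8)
The plan is to reduce the whole statement to a fact about annihilator ideals in the commutative ring $RG$: I will show that the hypothesis on $A$ and $B$ forces the ideal ${\rm ann}_{RG}(A)+{\rm ann}_{RG}(B)$ to contain a nonzero element $s$ of $R$. Since this ideal annihilates ${\rm Ext}^n_{RG}(A,B)$ and ${\rm Tor}_n^{RG}(A,B)$ for every $n$ — a general fact about bifunctors over a commutative ring — such an $s$ annihilates both modules, so they are bounded $R$-modules. This is exactly the argument used for $R=\mathbb Z$ in [{\bf 3}], carried out with $\mathbb Q$ replaced by the field of fractions $F$ of $R$; the condition that $R/Ra$ be finite for $a\ne 0$ is the analogue of finiteness of $\mathbb Z/n$ and makes ``bounded'' the appropriate finiteness notion over $R$.

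To manufacture $s$ I would pass to $F$. Put $V=A\otimes_R F$ and $W=B\otimes_R F$; these are finite-dimensional $F$-vector spaces because $A$ and $B$ have finite $R$-rank, so the quotients $FG/{\rm ann}_{FG}(V)$ and $FG/{\rm ann}_{FG}(W)$ embed in ${\rm End}_F(V)$ and ${\rm End}_F(W)$ and are therefore finite-dimensional, hence Artinian, $F$-algebras. A direct localization computation (using that $A$ is $R$-torsion-free) identifies ${\rm ann}_{FG}(V)$ with ${\rm ann}_{RG}(A)\cdot FG$, and similarly for $W$; consequently it suffices to prove that ${\rm ann}_{FG}(V)+{\rm ann}_{FG}(W)=FG$, for then $1=c/s$ with $c\in{\rm ann}_{RG}(A)+{\rm ann}_{RG}(B)$ and $0\ne s\in R$, and $s=c$ is the desired element. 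Suppose this fails. Then both ideals are contained in a single maximal ideal $\mathfrak m$ of $FG$, so the simple $FG$-module $\kappa(\mathfrak m)=FG/\mathfrak m$ occurs as a composition factor of each of the finite-length modules $V$ and $W$ (viewed over the Artinian algebras above). Writing such a module as the direct sum of its primary components and inspecting the socle of the $\mathfrak m$-primary component, one sees that $\kappa(\mathfrak m)$ is not merely a subquotient but actually a submodule of $V$, and likewise of $W$.

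It remains to transfer this back to $A$ and $B$. If $U\le V$ and $U'\le W$ are submodules with $U\cong\kappa(\mathfrak m)\cong U'$, then $U\cap A$ and $U'\cap B$ are nonzero $RG$-submodules of $A$ and $B$ whose tensor products with $F$ recover $U$ and $U'$. Choosing an $FG$-isomorphism $\theta\colon U\to U'$, the submodules $\theta(U\cap A)$ and $U'\cap B$ are both full $RG$-lattices in $U'$, so their intersection is nonzero (an element of one, cleared of denominators, lies in the other), and this intersection is at once an $RG$-submodule of $B$ and, via $\theta$, isomorphic to an $RG$-submodule of $A$. That contradicts the hypothesis on $A$, so ${\rm ann}_{FG}(V)+{\rm ann}_{FG}(W)=FG$ after all, which is what was required; the case of ${\rm Tor}$ is word for word the same.

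The one step I expect to be genuinely delicate is this descent — verifying cleanly that a common composition factor of $V$ and $W$ over $F$ produces isomorphic nonzero $RG$-submodules of $A$ and $B$. Care is needed because $FG$ itself need not be Noetherian when $G$ is not finitely generated, so the primary-decomposition and socle arguments should be performed over the finite-dimensional quotient algebras $FG/{\rm ann}_{FG}(V)$ and $FG/{\rm ann}_{FG}(W)$ rather than over $FG$ directly, and the back-and-forth between $RG$-submodules of $A$ and $RG$-lattices in $V$ must be run with the torsion-freeness of $A$ kept firmly in view.
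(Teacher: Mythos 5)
Your proof is correct, but it follows a genuinely different route from the one the paper relies on. The paper does not reprove Proposition 2.1; it defers to [{\bf 3}, Proposition A] and notes explicitly that "in proving Proposition 2.1, one requires" the near-splitting criterion of Proposition 2.2, so the intended argument is an induction on Hirsch rank $h(A)$ through the rationally irreducible base case, mirroring the proof of Proposition A later in this section. You instead produce a single nonzero $s\in R$ lying in ${\rm ann}_{RG}(A)+{\rm ann}_{RG}(B)$ by localizing at $F={\rm Frac}(R)$, passing to the finite-dimensional (hence Artinian) quotient algebras $FG/{\rm ann}_{FG}(V)$ and $FG/{\rm ann}_{FG}(W)$, showing a common maximal ideal would force a common simple submodule $\kappa(\mathfrak m)$ of $V$ and $W$, and descending that to isomorphic nonzero $RG$-submodules of $A$ and $B$ via lattices; then $s$ kills ${\rm Ext}^n_{RG}(A,B)$ and ${\rm Tor}^{RG}_n(A,B)$ uniformly for all $n$ by the standard annihilator fact for bifunctors over the commutative ring $RG$. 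The steps you flagged as delicate — identifying ${\rm ann}_{FG}(V)$ with ${\rm ann}_{RG}(A)\cdot FG$ using $R$-torsion-freeness, working over the quotient algebras rather than the possibly non-Noetherian $FG$, and intersecting two full $RG$-lattices in $U'$ — are all handled correctly. What each approach buys: yours is shorter, gives a uniform bound across all $n$, and in fact never uses the hypothesis that $R/Ra$ is finite for $a\neq 0$ (that hypothesis matters downstream, to upgrade boundedness to finiteness, and is also what powers Proposition 2.2). The paper's near-splitting route, by contrast, is the one that scaffolds Proposition A, where your field-of-fractions reduction is unavailable because $B$ is torsion and $B\otimes_RF=0$; that is presumably why the paper sets up Proposition 2.2 rather than taking the shorter annihilator path.
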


In proving Proposition 2.1, one requires the observation below from \cite{kroploren} concerning near splittings of modules. We will invoke this property in proving Proposition A, too.  

\begin{proposition} Let $R$ be a ring. Suppose that $A$ and $B$ are $R$-modules such that, for every positive integer $m$, both the kernel and the cokernel of the map $b\mapsto mb$ from $B$ to $B$ are finite.  Let $0\rightarrow B\rightarrow E\rightarrow A\rightarrow 0$  be an $R$-module extension and $\xi$ the element of ${\rm Ext}_R^1(A,B)$ that corresponds to this extension. Then $\xi$ has finite order if and only if $E$ has a submodule $X$ such that $B\cap X$ and $E/B+X$ are finite. \nolinebreak \hfill\(\square\)
\end{proposition}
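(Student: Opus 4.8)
The plan is to reduce the whole statement to the standard description of the order of an extension class in terms of pushouts. For a positive integer $n$, let $\mu_n\colon B\to B$ denote multiplication by $n$. Then $n\xi$ is the class of the pushout of $0\to B\to E\to A\to 0$ along $\mu_n$, so $n\xi=0$ exactly when that pushout extension splits; and, by a routine diagram chase with the pushout square, the latter holds if and only if $\mu_n$ extends to an $R$-homomorphism $f\colon E\to B$ with $f|_B=\mu_n$ (here $B$ is identified with its image in $E$). The entire argument amounts to converting the existence of such an $f$ into the existence of the submodule $X$ and conversely.

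Suppose first that $\xi$ has finite order; choose $n$ and $f\colon E\to B$ with $f|_B=\mu_n$ as above, and set $X=\ker f$. Then $B\cap X=\ker(\mu_n\colon B\to B)$ is finite by the hypothesis on $B$. Furthermore $f$ induces an isomorphism $E/X\cong f(E)$ under which $(B+X)/X$ corresponds to $f(B)=nB$, whence $E/(B+X)\cong f(E)/nB$; since $nB\le f(E)\le B$, this is a submodule of $B/nB$, which is finite, again by the hypothesis on $B$. So $X$ has the two required properties.

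Conversely, suppose $E$ has a submodule $X$ with $C:=B\cap X$ and $E/(B+X)$ finite. I would first pass to $\bar E=E/C$, with $\bar B$ and $\bar X$ the images of $B$ and $X$: here $\bar B\cap\bar X=0$, while $\bar E/\bar B\cong A$ and $\bar E/(\bar B+\bar X)\cong E/(B+X)$ is finite, so $\bar E$ represents the image $\bar\xi$ of $\xi$ under the map ${\rm Ext}^1_R(A,B)\to{\rm Ext}^1_R(A,B/C)$. Since $C$ is finite, ${\rm Ext}^1_R(A,C)$ is annihilated by the exponent of $C$, so the long exact sequence associated to $0\to C\to B\to B/C\to 0$ shows that $\xi$ has finite order provided $\bar\xi$ does. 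Thus it suffices to handle the case $B\cap X=0$. In that case $B+X=B\oplus X$ is a submodule of $E$ of some finite index $n$, so $nE\le B\oplus X$, and the map sending $e$ to the $B$-component of $ne\in B\oplus X$ is a well-defined $R$-homomorphism $f\colon E\to B$ — $R$-linearity being immediate since $r(ne)=n(re)$ — with $f|_B=\mu_n$. Hence $n\xi=0$.

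The only point requiring care is the reduction to $B\cap X=0$: one must confirm that a finite submodule contributes only bounded torsion to ${\rm Ext}^1$ and that the ``if'' direction in the reduced situation needs no hypothesis on $B$. In fact the hypotheses on $B$ enter only in the ``only if'' direction, through the finiteness of $\ker\mu_n$ and of $B/nB$; everything else is either the pushout characterization of $n\xi=0$ or an elementary verification.
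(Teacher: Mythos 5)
Your proof is correct. The paper itself does not reprove Proposition 2.2 but cites it from \cite{kroploren}; judged on its own terms, your argument is a clean and complete proof. The pushout characterization of $n\xi=0$ (namely, that $\mu_n$ extends to an $R$-homomorphism $E\to B$ over $B$) is the standard tool for this sort of statement, and both directions are carried out carefully: in the forward direction you correctly identify $E/(B+X)$ with a submodule of $B/nB$, and in the converse the reduction to $B\cap X=0$ via the long exact sequence for $0\to C\to B\to B/C\to 0$ (using that $\operatorname{Ext}^1_R(A,C)$ is killed by the exponent of $C$) is exactly right, as is the construction of $f$ from the direct complement. Your observation that the hypotheses on $B$ are used only in the forward implication is also accurate.
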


Two other results from \cite{kroploren} that we will employ in this section, and in \S 4, are the following proposition and its corollary.

\begin{proposition} Let $G$ be a group and $R$ a commutative ring. Suppose that $A$ and $B $ are $RG$-modules and regard ${\rm Ext}^n_R(A,B)$ and ${\rm Tor}_n^R(A,B)$ as $\mathbb ZG$-modules via the diagonal action for $n\geq 0$. 
Then the following two statements hold. 
\vspace{5pt}

\noindent (i) There is a cohomology spectral sequence whose $E_2$-page is given by 
$$E_2^{pq}=H^p(G,{\rm Ext}^q_R(A,B)),$$
\noindent and that converges to ${\rm Ext}^n_{RG}(A,B)$.
\vspace{5pt}

\noindent (ii) There is a homology spectral sequence whose $E^2$-page is given by 
$$E^2_{pq}=H_p(G,{\rm Tor}_q^R(A,B)),$$
\noindent and that converges to ${\rm Tor}_n^{RG}(A,B)$. \hfill\(\square\)
\end{proposition}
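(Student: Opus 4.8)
Proof proposal for Proposition 2.3 (the spectral sequence statement).

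The plan is to build both spectral sequences as Grothendieck spectral sequences associated with compositions of functors, exactly as one constructs the Lyndon--Hochschild--Serre spectral sequences. For part (i), I would work in the category of $RG$-modules and consider the two functors $\mathrm{Hom}_R(A,-)\colon RG\text{-Mod}\to RG\text{-Mod}$ (where the $RG$-action on $\mathrm{Hom}_R(A,-)$ is the diagonal one, using the $G$-action on $A$ and on the second variable) and the fixed-point functor $(-)^G=\mathrm{Hom}_{RG}(R,-)\colon RG\text{-Mod}\to \mathbb Z\text{-Mod}$. The composite $(-)^G\circ\mathrm{Hom}_R(A,-)$ is naturally isomorphic to $\mathrm{Hom}_{RG}(A,-)$: an $R$-linear map $A\to B$ is $G$-equivariant precisely when it is fixed by the diagonal $G$-action on $\mathrm{Hom}_R(A,B)$. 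The right derived functors of $(-)^G$ are $H^p(G,-)$, and the right derived functors of $\mathrm{Hom}_R(A,-)$ are $\mathrm{Ext}^q_R(A,-)$ with their diagonal $\mathbb ZG$-module structure. Hence the Grothendieck spectral sequence reads $E_2^{pq}=H^p(G,\mathrm{Ext}^q_R(A,B))\Rightarrow \mathrm{Ext}^n_{RG}(A,B)$, which is precisely (i). Part (ii) is the analogous homological construction: one writes $A\otimes_{RG}-\;=\;R\otimes_{RG}(A\otimes_R -)$, where $A\otimes_R-$ carries the diagonal $G$-action and $R\otimes_{RG}-$ is the coinvariants functor, whose left derived functors are $H_p(G,-)$; the left derived functors of $A\otimes_R-$ are $\mathrm{Tor}_q^R(A,B)$, and the Grothendieck spectral sequence for the composite gives $E^2_{pq}=H_p(G,\mathrm{Tor}_q^R(A,B))\Rightarrow \mathrm{Tor}_n^{RG}(A,B)$.

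The step requiring genuine care is the hypothesis of the Grothendieck spectral sequence theorem: the ``left'' functor must send injective (resp. projective) objects to objects that are acyclic for the ``right'' functor. For (i), I must check that if $I$ is an injective $RG$-module, then $\mathrm{Hom}_R(A,I)$, with its diagonal $G$-action, is acyclic for $H^*(G,-)$, i.e.\ $H^p(G,\mathrm{Hom}_R(A,I))=0$ for $p>0$. The standard argument: every injective $RG$-module is a direct summand of a coinduced module $\mathrm{Hom}_{R}(RG,J)$ for some injective $R$-module $J$ (using that $RG$ is free, hence flat, over $R$), and one verifies a natural isomorphism $\mathrm{Hom}_R(A,\mathrm{Hom}_R(RG,J))\cong \mathrm{Hom}_R(RG, \mathrm{Hom}_R(A,J))$ of $RG$-modules under the diagonal actions — that is, the diagonal-action module is again coinduced, hence $H^p(G,-)$-acyclic by Shapiro's lemma. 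For (ii), dually, a projective $RG$-module is a summand of a free one $RG^{(\Lambda)}$, and $A\otimes_R RG^{(\Lambda)}$ with the diagonal action is isomorphic to $(A_0\otimes_R RG)^{(\Lambda)}$ where $A_0$ denotes $A$ with trivial $G$-action — again an induced module, hence $H_*(G,-)$-acyclic.

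I would expect the main obstacle to be precisely the bookkeeping of the diagonal $G$-actions in these ``untwisting'' isomorphisms: one must track carefully that $\mathrm{Hom}_R(RG,J)$ with the diagonal action (from the left-translation action on $RG$ twisted against the inversion, together with the trivial action on $J$) really is isomorphic, as an $RG$-module, to the coinduced module in the form to which Shapiro's lemma applies; the sign/inversion conventions are where errors creep in. Once these acyclicity lemmas are in place, both spectral sequences follow formally from the Grothendieck spectral sequence machinery, and the convergence statements are automatic since the composites compute $\mathrm{Ext}^*_{RG}(A,B)$ and $\mathrm{Tor}_*^{RG}(A,B)$ respectively. Since this is a known result quoted from \cite{kroploren}, I would in the actual write-up simply cite it; the sketch above is how one would reconstruct the proof if needed.
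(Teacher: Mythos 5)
Your proposal is correct, and it is the standard way to establish this result. Note that the paper under review does not actually reproduce a proof of Proposition~2.3; it is quoted verbatim from \cite{kroploren} with a box mark, so there is no ``paper's own proof'' in this text to compare against. That said, the Grothendieck spectral sequence argument you outline---factoring $\mathrm{Hom}_{RG}(A,-)$ as $(-)^G\circ\mathrm{Hom}_R(A,-)$ and $-\otimes_{RG}A$ as $R\otimes_{RG}(-\otimes_R A)$, then verifying that the inner functor sends injectives (resp.\ projectives) to $H^*(G,-)$-acyclic (resp.\ $H_*(G,-)$-acyclic) objects---is the expected argument, and your identification of the acyclicity lemmas as the genuine content is right. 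Two small points worth making explicit in a full write-up: (a) one also needs that the right (resp.\ left) derived functors of $\mathrm{Hom}_R(A,-)$ (resp.\ $A\otimes_R-$), computed in $RG$-Mod, really are $\mathrm{Ext}^q_R(A,-)$ (resp.\ $\mathrm{Tor}^R_q(A,-)$) with the diagonal action; this holds because $RG$ is free over $R$, so injective $RG$-modules restrict to injective $R$-modules and projective $RG$-modules restrict to projective, hence flat, $R$-modules. (b) The untwisting isomorphism $\mathrm{Hom}_R(A,\mathrm{Hom}_R(RG,J))\cong\mathrm{Hom}_R(RG,\mathrm{Hom}_R(A,J))$ is not the naive adjunction map; one must insert the twist $\Theta(f)(g)(a)=f(g^{-1}a)(g)$ (and dually $a\otimes g\mapsto g^{-1}a\otimes g$ for the tensor identity) to make it $G$-equivariant, exactly the bookkeeping you anticipate. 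With those details supplied, the argument is complete.
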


\begin{corollary} Let $G$ be a group and $R$ a commutative ring. Suppose that $A$ and $B $ are $RG$-modules and view both ${\rm Hom}_R(A,B)$ and $A\otimes_R B$ as $\mathbb ZG$-modules under the diagonal actions. 

\noindent (i) If either $A$ is projective or $B$ is injective as an $R$-module, then, for $n\geq 0$,

$${\rm Ext}^n_{RG}(A,B)\cong H^n(G,{\rm Hom}_R(A,B)).$$

\noindent (ii) If $B$ is flat as an $R$-module, then, for $n\geq 0$,

$${\rm Tor}_n^{RG}(A,B)\cong H_n(G, A\otimes_R B).$$
\hfill\(\square\)
\end{corollary}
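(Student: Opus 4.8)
The plan is to obtain both isomorphisms by feeding the hypotheses into the two spectral sequences of Proposition 2.3 and observing that each one degenerates to a single row.

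For part (i), I would begin from the elementary fact that ${\rm Ext}^q_R(A,B)=0$ for every $q\geq 1$ as soon as $A$ is projective over $R$, and likewise as soon as $B$ is injective over $R$. Either hypothesis therefore annihilates all terms with $q\geq 1$ on the $E_2$-page of the cohomology spectral sequence $E_2^{pq}=H^p(G,{\rm Ext}^q_R(A,B))\Rightarrow {\rm Ext}^n_{RG}(A,B)$ of Proposition 2.3(i), leaving only the row $q=0$, whose terms are $H^p(G,{\rm Hom}_R(A,B))$. A first-quadrant cohomology spectral sequence concentrated in the row $q=0$ collapses at $E_2$: for $r\geq 2$ the differential $d_r$ out of $E_r^{p0}$ lands in bidegree $(p+r,1-r)$, which sits below the first quadrant, while the differential $d_r$ into $E_r^{p0}$ issues from bidegree $(p-r,r-1)$, which lies in a row that has already been killed. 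Hence the decreasing filtration on ${\rm Ext}^n_{RG}(A,B)$ has a single nonzero quotient, and ${\rm Ext}^n_{RG}(A,B)\cong E_\infty^{n0}=E_2^{n0}=H^n(G,{\rm Hom}_R(A,B))$. No reconciliation of module structures is needed, because the $\mathbb ZG$-action carried by the bottom row of the spectral sequence of Proposition 2.3 is, by construction, the diagonal action named in the statement.

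Part (ii) runs along identical lines with homology in place of cohomology: flatness of $B$ over $R$ forces ${\rm Tor}^R_q(A,B)=0$ for all $q\geq 1$, so the homology spectral sequence $E^2_{pq}=H_p(G,{\rm Tor}^R_q(A,B))\Rightarrow {\rm Tor}_n^{RG}(A,B)$ of Proposition 2.3(ii) is concentrated in the row $q=0$, and the analogous bidegree bookkeeping for a first-quadrant homology spectral sequence shows that it too collapses at $E^2$, yielding ${\rm Tor}_n^{RG}(A,B)\cong E^2_{n0}=H_n(G,{\rm Tor}^R_0(A,B))=H_n(G,A\otimes_R B)$. I do not anticipate a genuine obstacle: the argument is just the standard degeneration of a Grothendieck-type spectral sequence once the relevant relative derived functors vanish, and the only points deserving a sentence of justification are the vanishing of the higher ${\rm Ext}^q_R$ and ${\rm Tor}^R_q$ under the respective hypotheses together with the identifications ${\rm Ext}^0_R(A,B)={\rm Hom}_R(A,B)$ and ${\rm Tor}^R_0(A,B)=A\otimes_R B$ on the surviving row.
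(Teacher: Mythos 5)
Your proof is correct and is precisely the intended deduction: the paper states Corollary 2.4 as an immediate consequence of Proposition 2.3, and the standard argument is exactly the one you give, namely that projectivity of $A$ (or injectivity of $B$) kills ${\rm Ext}^q_R(A,B)$ for $q\geq 1$ and flatness of $B$ kills ${\rm Tor}^R_q(A,B)$ for $q\geq 1$, so each spectral sequence is concentrated in the row $q=0$ and collapses at the second page.
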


The argument for Proposition A  also makes use of the properties of modules described in the following two lemmas. The proofs of these are entirely elementary and therefore left to the reader.

\begin{lemma} Let $G$ be a group. Suppose that $D$ is a $\mathbb ZG$-module whose underlying abelian group is a divisible \v{C}ernikov group. Then any $\mathbb ZG$-module extension of $D$ by a finite module can be expressed as a $\mathbb ZG$-module extension of a finite module by a quotient of $D$.  \nolinebreak \hfill\(\square\)
\end{lemma}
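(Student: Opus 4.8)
The plan is to realise the required submodule of $E$ as the image of multiplication by a suitable integer. Write the given extension as $0 \to F \to E \xrightarrow{\pi} D \to 0$ with $F$ finite, and put $n = |F|$, so that $nF = 0$ by Lagrange's theorem.

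First I would observe that the endomorphism $\mu_n\colon E \to E$ given by $e \mapsto ne$ is $\mathbb ZG$-linear, since $n$ lies in the centre of $\mathbb ZG$, and that it annihilates $F = \ker\pi$. Consequently $\mu_n$ factors through $\pi$, yielding a $\mathbb ZG$-homomorphism $\psi\colon D \to E$ with $\psi\pi = \mu_n$; its image is $\psi(D) = nE$. Hence $nE$ is a $\mathbb ZG$-submodule of $E$ which is a $\mathbb ZG$-module quotient of $D$, namely $D/\ker\psi$.

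It then remains to check that $E/nE$ is finite. Tensoring the given extension with $\mathbb Z/n\mathbb Z$ over $\mathbb Z$ gives a right-exact sequence $F/nF \to E/nE \to D/nD \to 0$; since $D$ is divisible we have $D/nD = 0$, so $E/nE$ is an epimorphic image of $F/nF$, and the latter is finite because $F$ is. Therefore the short exact sequence $0 \to nE \to E \to E/nE \to 0$ exhibits $E$ as a $\mathbb ZG$-module extension of the finite module $E/nE$ by the quotient $nE$ of $D$, which is exactly what is claimed.

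As the statement advertises, there is no genuine obstacle here: the whole argument uses nothing beyond multiplication by the single integer $n = |F|$ together with the universal property of quotients, so every map in sight is automatically $G$-equivariant. The one point worth flagging is that it is divisibility of $D$ — rather than the stronger \v{C}ernikov hypothesis — that makes $D/nD$ vanish and hence forces $E/nE$ to be finite; the \v{C}ernikov assumption enters only insofar as it guarantees that the quotient $nE$ of $D$ is again a (divisible) \v{C}ernikov group, which is what one wants for the applications in \S 2.
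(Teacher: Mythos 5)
Your argument is clean, but you have read ``extension of $D$ by a finite module'' with the roles of submodule and quotient interchanged, and so you prove the mirror image of the lemma the paper needs. In this paper an extension of $D$ by $F$ is a sequence $0\to D\to E\to F\to 0$ with $D$ the kernel: this is the reading implicit in calling a \v{C}ernikov group ``a finite extension of a direct sum of finitely many quasicyclic groups,'' and it is the reading forced by the way Lemma~2.5 is invoked in the proof of Proposition~A. There the lemma is applied to the decomposition $0\to B_0\to B\to B/B_0\to 0$ (with $B_0$ the divisible part of $B$) to manufacture a \emph{finite submodule} $F'\leq B$ with $B/F'$ a quotient of $B_0$, so that the hypothesis on quotients of $B$ passes to the divisible piece; and again later, where one starts from a divisible \v{C}ernikov submodule of finite index inside a quotient of $A$ and must end up with a finite submodule whose cofactor is a quotient of $W$. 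Your proof starts from $0\to F\to E\to D\to 0$ and produces a divisible submodule of finite index, which is the opposite presentation and does not feed into either application.

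The multiplication-by-$n$ idea is exactly right and only needs to be run in reverse. Starting from $0\to D\to E\to F\to 0$ with $n=|F|$, one has $nE\subseteq D$ since $n$ annihilates $E/D\cong F$, while $nD=D$ by divisibility, so $nE=D$ and the $\mathbb ZG$-linear map $\mu_n\colon E\to E$ has image $D$. Hence $0\to E[n]\to E\to E/E[n]\to 0$, with $E/E[n]\cong nE=D$ a quotient of $D$, is the required presentation \emph{provided} $E[n]$ is finite. This is where the \v{C}ernikov hypothesis genuinely enters: $E[n]\cap D=D[n]$ is finite because $D$ has finite \v{C}ernikov rank, and $E[n]/D[n]$ embeds in $E/D\cong F$, so $E[n]$ is finite. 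Accordingly your closing remark should be reversed: in the direction the paper intends it is precisely the \v{C}ernikov condition, not mere divisibility, that keeps the finite piece finite --- for a divisible $p$-group of infinite rank $D[n]$ is already infinite and the argument breaks down.
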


\begin{lemma} Let $R$ be a principal ideal domain and $G$ a group. Assume that $A$ and $B$ are $RG$-modules such that $A$ is finitely generated as an $RG$-module. 
\vspace{5pt}

\noindent (i) If $B$ is finitely generated as an $R$-module, then ${\rm Hom}_{RG}(A,B)$ is finitely generated as an $R$-module. 
\vspace{5pt}

\noindent (ii) If $B$ has finite rank as an $R$-module, then ${\rm Hom}_{RG}(A,B)$ has finite rank as an $R$-module.
\nolinebreak \hfill\(\square\)

\end{lemma}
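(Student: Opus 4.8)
The plan is to use the finite $RG$-generation of $A$ to realize $A$ as a quotient of a finitely generated free $RG$-module, and then to apply the left-exact contravariant functor ${\rm Hom}_{RG}(-,B)$ to turn this quotient map into an $R$-module embedding of ${\rm Hom}_{RG}(A,B)$ into a finite direct sum of copies of $B$. Each of the two assertions then follows from the corresponding closure property of $R$-submodules: finite generation over $R$ for part (i), finite $R$-rank for part (ii).

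Concretely, I would first choose a finite generating set $a_1,\dots,a_k$ of $A$ as an $RG$-module, obtaining an $RG$-epimorphism $\varphi\colon(RG)^k\twoheadrightarrow A$. Applying ${\rm Hom}_{RG}(-,B)$, and using that ${\rm Hom}_{RG}(RG,B)\cong B$ as $R$-modules (via $f\mapsto f(1)$, which is $R$-linear since $R$ is central in $RG$), yields an $R$-module monomorphism
$$\varphi^{*}\colon{\rm Hom}_{RG}(A,B)\hookrightarrow{\rm Hom}_{RG}\!\bigl((RG)^k,B\bigr)\cong B^k.$$
For part (i), if $B$ is finitely generated over $R$, then so is $B^k$; since $R$ is a principal ideal domain and hence Noetherian, every $R$-submodule of $B^k$ is finitely generated, so ${\rm Hom}_{RG}(A,B)$ is finitely generated over $R$. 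For part (ii), if $B$ has finite $R$-rank then $r_0(B^k)=k\,r_0(B)$ and $r_p(B^k)=k\,r_p(B)$ for every prime $p\in R$, so $B^k$ has finite $R$-rank, and it remains only to note that finite $R$-rank is inherited by $R$-submodules. This last point comes from unwinding the definitions: for an $R$-submodule $C$ of an $R$-module $D$, exactness of localization at the field of fractions $F$ gives $C\otimes_R F\hookrightarrow D\otimes_R F$, whence $r_0(C)\le r_0(D)$, while left-exactness of ${\rm Hom}(R/pR,-)$ gives ${\rm Hom}(R/pR,C)\hookrightarrow{\rm Hom}(R/pR,D)$, whence $r_p(C)\le r_p(D)$. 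Taking $C={\rm Hom}_{RG}(A,B)$ and $D=B^k$ completes the argument.

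There is no genuine obstacle in this proof; the only step that merits any care is the verification that finite $R$-rank passes to $R$-submodules, and even that reduces at once to the exactness of localization and the left-exactness of ${\rm Hom}(R/pR,-)$.
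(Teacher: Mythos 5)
Your proof is correct, and the paper itself leaves this lemma to the reader as elementary, so there is no printed argument to compare against; but your approach (embedding $\mathrm{Hom}_{RG}(A,B)$ into $B^k$ via a free presentation of $A$, then invoking Noetherianness for (i) and the monotonicity of $r_0$ and $r_p$ under submodules for (ii)) is exactly the standard route the author evidently had in mind.
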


Before establishing Proposition A, we discuss a key tool employed in the proof, namely, the Pontryagin dual of a module, defined for the convenience of the reader below.

\begin{definition}{\rm Assume that $p$ is a prime. If $A$ is an abelian $p$-group with finite rank, then the {\it Pontryagin dual} $A^\ast$ of $A$ is the group ${\rm Hom}_{\mathbb Z}(A,\mathbb Z_{p^\infty})$. The dual $A^\ast$ is a finitely generated $\hat{\mathbb Z}_p$-module. If $A$ happens to be endowed with a $\mathbb ZG$-module structure for a group $G$, then we equip $A^\ast$ with a $G$-action by letting $(g \phi)(x)=\phi(g^{-1}x)$ for every $g\in G$, $\phi\in A^\ast$, and  $x\in A$. In this way, we may view $A^\ast$ as a $\hat{\mathbb Z}_pG$-module. 

For any finitely generated $\hat{\mathbb Z}_p$-module $M$, the {\it Pontryagin dual} $M^\ast$ of $M$ is the group ${\rm Hom}_{\mathbb Z}(M,\mathbb Z_{p^\infty})$.  In this case, the dual $M^\ast$ is an abelian $p$-group with finite rank. If $M$ is a $\mathbb ZG$-module for a group $G$, then we regard $M^\ast$ as a $\mathbb ZG$-module using the same $G$-action as above.
}

\end{definition}

Some elementary properties of Pontryagin duals are described in the following two lemmas. For proofs of these, we refer the reader to \cite{wilson}. 

\begin{lemma} Let $p$ be a prime. 
\vspace{5pt}

\noindent (i) If $A$ and $B$ are abelian $p$-groups of finite rank, then 

$${\rm Hom}_{\mathbb Z}(A, B)\cong {\rm Hom}_{\hat{\mathbb Z}_p}(B^\ast, A^\ast).$$
\vspace{5pt}

\noindent (ii) If $M$ and $N$ are finitely generated $\hat{\mathbb Z}_p$-modules,  then 

$${\rm Hom}_{\hat{\mathbb Z}_p}(M, N)\cong {\rm Hom}_{\mathbb Z}(N^\ast, M^\ast).$$ \nolinebreak \hfill\(\square\)

\end{lemma}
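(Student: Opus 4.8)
The plan is to deduce both isomorphisms from Matlis (Pontryagin) duality over the complete discrete valuation ring $\hat{\mathbb Z}_p$, whose residue field is $\mathbb Z/p\mathbb Z$ and for which $\mathbb Z_{p^\infty}$ is an injective cogenerator --- indeed the injective hull of the residue field. The preliminary observation is that an abelian $p$-group $A$ of finite rank carries a unique $\hat{\mathbb Z}_p$-module structure extending its $\mathbb Z$-module structure --- each element lies in a finite cyclic $p$-subgroup, on which $\mathbb Z$ acts through a finite quotient that is already a $\hat{\mathbb Z}_p$-module --- and that every $\mathbb Z$-linear map from such a group, or from a finitely generated $\hat{\mathbb Z}_p$-module, into $\mathbb Z_{p^\infty}$ is automatically $\hat{\mathbb Z}_p$-linear (its image is a finite, or finite-rank, $p$-torsion group, on which the two module structures agree). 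Consequently ${\rm Hom}_{\mathbb Z}(A,B)={\rm Hom}_{\hat{\mathbb Z}_p}(A,B)$ for abelian $p$-groups $A,B$ of finite rank, and the Pontryagin dual functor can be written $D(-)={\rm Hom}_{\hat{\mathbb Z}_p}(-,\mathbb Z_{p^\infty})$ on both of the relevant categories; so the entire statement may be read inside the category of $\hat{\mathbb Z}_p$-modules.

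Next I would establish the two structural inputs to the duality: (a) $D$ is exact (because $\mathbb Z_{p^\infty}$ is $\hat{\mathbb Z}_p$-injective) and interchanges the class of finitely generated $\hat{\mathbb Z}_p$-modules with the class of abelian $p$-groups of finite rank (equivalently, \v{C}ernikov $p$-groups); and (b) for every module $A$ in either class the canonical evaluation map $\eta_A\colon A\to D(D(A))$, $\eta_A(a)(\phi)=\phi(a)$, is an isomorphism, naturally in $A$. Both (a) and (b) are proved by d\'evissage: invoking the structure theorem over the principal ideal domain $\hat{\mathbb Z}_p$ (every finitely generated module is a finite direct sum of copies of $\hat{\mathbb Z}_p$ and of cyclic modules $\hat{\mathbb Z}_p/p^k\hat{\mathbb Z}_p$) and the structure of \v{C}ernikov $p$-groups (a finite extension of a finite direct sum of copies of $\mathbb Z_{p^\infty}$), the exactness of $D$ together with the five lemma reduces (a) and the bijectivity of $\eta$ to the explicit computations ${\rm End}_{\hat{\mathbb Z}_p}(\mathbb Z_{p^\infty})\cong\hat{\mathbb Z}_p$, ${\rm Hom}_{\hat{\mathbb Z}_p}(\hat{\mathbb Z}_p,\mathbb Z_{p^\infty})\cong\mathbb Z_{p^\infty}$, and $D(\hat{\mathbb Z}_p/p^k\hat{\mathbb Z}_p)\cong\mathbb Z/p^k\mathbb Z$, in each of which $\eta$ is visibly an isomorphism.

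Granting (a) and (b), part (i) is formal. One defines the transpose map $\Phi\colon {\rm Hom}_{\hat{\mathbb Z}_p}(A,B)\to {\rm Hom}_{\hat{\mathbb Z}_p}(D(B),D(A))$ by $\Phi(f)=D(f)$, i.e. $\Phi(f)(\phi)=\phi\circ f$. If $D(f)=0$ then $D(D(f))=0$, which by naturality and bijectivity of $\eta$ forces $f=0$, so $\Phi$ is injective; while for $g\colon D(B)\to D(A)$ the map $f=\eta_B^{-1}\circ D(g)\circ\eta_A\colon A\to B$ satisfies $D(f)=g$ by a routine diagram chase using the naturality of $\eta$ and the triangle identity $D(\eta_X)\circ\eta_{D(X)}={\rm id}_{D(X)}$, so $\Phi$ is surjective; this proves (i). For part (ii), apply (i) with $A=N^\ast$ and $B=M^\ast$: this gives ${\rm Hom}_{\mathbb Z}(N^\ast,M^\ast)\cong {\rm Hom}_{\hat{\mathbb Z}_p}(D(D(M)),D(D(N)))$, which by the biduality isomorphisms $M\cong D(D(M))$ and $N\cong D(D(N))$ is isomorphic to ${\rm Hom}_{\hat{\mathbb Z}_p}(M,N)$. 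The one genuinely substantive step is (b) --- setting up Matlis duality over $\hat{\mathbb Z}_p$ and checking the naturality of the evaluation map --- and after that everything is a formal manipulation of ${\rm Hom}$ and of the functor $D$.
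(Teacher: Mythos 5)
The paper does not prove this lemma itself --- it cites Wilson's \emph{Profinite Groups}, where Pontryagin duality is developed topologically: the dual of a pro-$p$ group is its group of \emph{continuous} homomorphisms into $\mathbb{R}/\mathbb{Z}$, equivalently into $\mathbb{Z}_{p^\infty}$. Your strategy of recasting the statement as purely algebraic Matlis duality over the complete DVR $\hat{\mathbb{Z}}_p$ is a genuinely reasonable alternative, and once the dual functor is taken to be $D(-) = \mathrm{Hom}_{\hat{\mathbb{Z}}_p}(-,\mathbb{Z}_{p^\infty})$ on both sides, the rest of your argument (d\'evissage, exactness of $D$, biduality, and the formal transpose computation) is sound.

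However, there is a genuine gap in the step that is supposed to let you use $D$ in place of $(-)^\ast$. You assert that every $\mathbb{Z}$-linear map from a finitely generated $\hat{\mathbb{Z}}_p$-module $M$ into $\mathbb{Z}_{p^\infty}$ is automatically $\hat{\mathbb{Z}}_p$-linear, on the grounds that the image is a torsion group on which the two module structures agree. That justification is a non sequitur: the image being torsion says nothing about whether the map respects the $\hat{\mathbb{Z}}_p$-action on the \emph{source}. The claim is in fact false whenever $M$ has a nontrivial free summand. Take $M=\hat{\mathbb{Z}}_p$ itself. Every $\hat{\mathbb{Z}}_p$-linear map $\hat{\mathbb{Z}}_p\to\mathbb{Z}_{p^\infty}$ is $a\mapsto a\xi$ for some $\xi\in\mathbb{Z}_{p^\infty}$ and therefore has \emph{finite} image, so $\mathrm{Hom}_{\hat{\mathbb{Z}}_p}(\hat{\mathbb{Z}}_p,\mathbb{Z}_{p^\infty})\cong\mathbb{Z}_{p^\infty}$. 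On the other hand, $\hat{\mathbb{Z}}_p/\mathbb{Z}_{(p)}$ is a divisible torsion-free abelian group of continuum cardinality, hence a nonzero $\mathbb{Q}$-vector space, which surjects $\mathbb{Z}$-linearly onto $\mathbb{Q}$ and thence onto $\mathbb{Q}/\mathbb{Z}_{(p)}\cong\mathbb{Z}_{p^\infty}$. So there exist $\mathbb{Z}$-linear \emph{surjections} $\hat{\mathbb{Z}}_p\twoheadrightarrow\mathbb{Z}_{p^\infty}$, and these are not $\hat{\mathbb{Z}}_p$-linear; indeed $\mathrm{Hom}_{\mathbb{Z}}(\hat{\mathbb{Z}}_p,\mathbb{Z}_{p^\infty})$ is uncountable and not even a torsion group. (Your reduction is correct for the \emph{torsion} side: when $A$ is a $p$-group of finite rank every element of $A$ has $p$-power order, and then $p^n a=0$ forces $p^nf(a)=0$, which is exactly what makes $\mathbb{Z}$-linear and $\hat{\mathbb{Z}}_p$-linear maps out of $A$ coincide. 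That argument has no analogue for elements of infinite order.) To repair the proof you should explicitly define the dual of a finitely generated $\hat{\mathbb{Z}}_p$-module $M$ as $\mathrm{Hom}_{\hat{\mathbb{Z}}_p}(M,\mathbb{Z}_{p^\infty})$ --- equivalently the continuous $\mathbb{Z}$-dual with respect to the $p$-adic topology, which is what the paper's cited source uses --- rather than claiming this coincides with the unrestricted $\mathrm{Hom}_{\mathbb{Z}}$.
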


\begin{lemma} Let $p$ be a prime and $G$ a group. 
\vspace{5pt}

\noindent (i) If $A$ is a $\mathbb ZG$-module whose additive group is a $p$-group of finite rank, then $A$ is isomorphic to its double dual $A^{\ast \ast}$. 
\vspace{5pt}

\noindent (ii) If $M$ is a $\hat{\mathbb Z}_pG$-module that is finitely generated as a $\hat{\mathbb Z}_p$-module, then $M$ is isomorphic to its double dual $M^{\ast \ast}$. \nolinebreak \hfill\(\square\)
\end{lemma}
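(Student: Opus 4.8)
The plan is to produce a single natural map that works for both parts. For a module $A$ as in (i) define the evaluation homomorphism $\eta_A\colon A\to A^{\ast\ast}={\rm Hom}_{\mathbb Z}(A^{\ast},\mathbb Z_{p^\infty})$ by letting $\eta_A(a)$ be the functional $\phi\mapsto\phi(a)$; for a module $M$ as in (ii) define $\eta_M\colon M\to M^{\ast\ast}$ in exactly the same way. First I would check that $\eta_A$ is additive and that, using the prescribed action $(g\phi)(x)=\phi(g^{-1}x)$, it is $G$-equivariant: both $\eta_A(ga)(\phi)$ and $(g\cdot\eta_A(a))(\phi)$ equal $\phi(ga)$. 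Hence $\eta_A$ is a homomorphism of $\mathbb ZG$-modules (resp.\ $\eta_M$ of $\hat{\mathbb Z}_pG$-modules). Since a $\mathbb ZG$-homomorphism is an isomorphism precisely when its underlying map of abelian groups is bijective, it now suffices to prove $\eta_A$ (resp.\ $\eta_M$) is bijective on underlying groups, so the group $G$ plays no further role.

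For bijectivity I would argue in two stages. Injectivity: $\mathbb Z_{p^\infty}$ is a divisible, hence injective, $\mathbb Z$-module and is a cogenerator for the categories in question, so given $0\neq a$ the cyclic submodule it generates admits a nonzero homomorphism to $\mathbb Z_{p^\infty}$, which extends over the whole module; thus some $\phi$ has $\phi(a)\neq 0$, i.e.\ $\eta(a)\neq 0$. Surjectivity: here I would invoke the structure theory. An abelian $p$-group of finite rank is the direct sum of finitely many cyclic $p$-groups and finitely many copies of $\mathbb Z_{p^\infty}$ (the divisible part splits off and the reduced complement has finite socle, hence is finite), while a finitely generated $\hat{\mathbb Z}_p$-module, $\hat{\mathbb Z}_p$ being a principal ideal domain, is a direct sum of finitely many copies of $\hat{\mathbb Z}_p$ and cyclic $p$-groups. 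Both the functor $(-)^{\ast}$ and the transformation $\eta$ carry finite direct sums to finite direct sums, so the problem reduces to the three indecomposable types $\mathbb Z/p^k$, $\mathbb Z_{p^\infty}$, and $\hat{\mathbb Z}_p$. For each one computes the dual directly --- $(\mathbb Z/p^k)^{\ast}\cong\mathbb Z/p^k$, $(\mathbb Z_{p^\infty})^{\ast}\cong\hat{\mathbb Z}_p$ (via $\mathbb Z_{p^\infty}=\varinjlim\mathbb Z/p^n$, so ${\rm Hom}(\mathbb Z_{p^\infty},\mathbb Z_{p^\infty})=\varprojlim\mathbb Z/p^n$), and $(\hat{\mathbb Z}_p)^{\ast}\cong\mathbb Z_{p^\infty}$ --- and then verifies that under these identifications $\eta$ becomes the identity.

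The step I expect to demand the most care is precisely this verification of the base cases for the divisible and free summands: the computation ${\rm Hom}_{\mathbb Z}(\hat{\mathbb Z}_p,\mathbb Z_{p^\infty})\cong\mathbb Z_{p^\infty}$ (for which one checks that every homomorphism $\hat{\mathbb Z}_p\to\mathbb Z_{p^\infty}$ has finite image, equivalently kills some $p^n\hat{\mathbb Z}_p$, so that restriction along $\mathbb Z\hookrightarrow\hat{\mathbb Z}_p$ is an isomorphism onto ${\rm Hom}(\mathbb Z,\mathbb Z_{p^\infty})=\mathbb Z_{p^\infty}$), together with the bookkeeping needed to confirm that $\eta_{\hat{\mathbb Z}_p}$ is the identity of $\hat{\mathbb Z}_p$ rather than multiplication by some non-unit. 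Everything else is a routine diagram chase. All of this is classical --- it is the content of the reference \cite{wilson} cited above --- and is included here only for completeness.
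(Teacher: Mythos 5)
Your overall strategy is the standard one (and is essentially what the cited source \cite{wilson} does, the paper itself offering no proof): define the natural evaluation map $\eta$, check it is $G$-equivariant so that only bijectivity of the underlying map of abelian groups is at issue, and then reduce by the structure theory to the indecomposable summands $\mathbb Z/p^k$, $\mathbb Z_{p^\infty}$ and $\hat{\mathbb Z}_p$. The equivariance computation, the injectivity argument, and the base cases $(\mathbb Z/p^k)^\ast\cong\mathbb Z/p^k$ and $(\mathbb Z_{p^\infty})^\ast\cong\varprojlim\mathbb Z/p^n=\hat{\mathbb Z}_p$ are all correct.

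The step you singled out as delicate is, however, the one that fails as written. It is \emph{not} true that every group homomorphism $\hat{\mathbb Z}_p\to\mathbb Z_{p^\infty}$ has finite image or kills some $p^n\hat{\mathbb Z}_p$. Indeed, $\hat{\mathbb Z}_p/\mathbb Z$ is a divisible group of torsion-free rank $2^{\aleph_0}$, hence has $\mathbb Q$ as a direct summand, and composing $\hat{\mathbb Z}_p\to\hat{\mathbb Z}_p/\mathbb Z\to\mathbb Q\to\mathbb Q/\mathbb Z_{(p)}\cong\mathbb Z_{p^\infty}$ produces a surjective homomorphism $\hat{\mathbb Z}_p\to\mathbb Z_{p^\infty}$ vanishing on $\mathbb Z$. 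Consequently ${\rm Hom}_{\mathbb Z}(\hat{\mathbb Z}_p,\mathbb Z_{p^\infty})$ is vastly larger than $\mathbb Z_{p^\infty}$, and with the dual of a profinite module taken literally as the abstract ${\rm Hom}_{\mathbb Z}$, the double-dual statement already fails for $A=\mathbb Z_{p^\infty}$ (and for $M=\hat{\mathbb Z}_p$). The resolution is that the dual of a finitely generated $\hat{\mathbb Z}_p$-module must be taken to be the \emph{continuous} dual, equivalently ${\rm Hom}_{\hat{\mathbb Z}_p}(M,\mathbb Z_{p^\infty})$; this is the convention in \cite{wilson} and is already implicit in Lemma 2.8(i), whose right-hand side uses ${\rm Hom}_{\hat{\mathbb Z}_p}$. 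With that reading, $(\hat{\mathbb Z}_p)^\ast\cong\mathbb Z_{p^\infty}$ by evaluation at $1$ (no analysis of arbitrary additive maps is needed), every additive map between finite $p$-groups is automatically $\hat{\mathbb Z}_p$-linear so the torsion base cases are unaffected, and the rest of your argument goes through verbatim. You should therefore restate the definition you are working with before running the computation; as it stands your proof proves a false statement in the free/divisible base case.
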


The crucial property of Pontryagin duals for our purposes is described in Lemma 2.9 below, which appeared in a preliminary version of \cite{kroploren}. This fact is undoubtedly well known; however, as we are unaware of any reference to it in the published literature, we include a proof. 

\begin{lemma} Assume that $p$ is a prime and $G$ a group. Let $A$ and $B$ be $\mathbb ZG$-modules whose additive groups are $p$-groups of finite rank. Suppose further that, as an abelian group, $B$ is divisible. Then

$${\rm Ext}_{\mathbb ZG}^n(A,B)\cong {\rm Ext}_{\hat{\mathbb Z}_pG}^n(B^\ast,A^\ast)$$
for $n\geq 0$.
\end{lemma}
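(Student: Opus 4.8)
The plan is to express each side of the desired isomorphism as a group cohomology group with the \emph{same} coefficient module. The point to exploit is that the hypothesis that $B$ is divisible has two consequences at once: it makes $B$ injective as a $\mathbb Z$-module, and it makes the Pontryagin dual $B^\ast$ projective as a $\hat{\mathbb Z}_p$-module.

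First I would treat the left-hand side. Since the additive group of $B$ is divisible, $B$ is injective as a $\mathbb Z$-module, so Corollary 2.4(i), applied with $R=\mathbb Z$, gives $\mathrm{Ext}^n_{\mathbb ZG}(A,B)\cong H^n(G,\mathrm{Hom}_{\mathbb Z}(A,B))$ for all $n\ge 0$, where $\mathrm{Hom}_{\mathbb Z}(A,B)$ carries the diagonal $G$-action. Then I would treat the right-hand side. As $B$ is a divisible $p$-group of finite rank, its additive group is a finite direct sum of copies of $\mathbb Z_{p^\infty}$, so $B^\ast=\mathrm{Hom}_{\mathbb Z}(B,\mathbb Z_{p^\infty})$ is a finitely generated free $\hat{\mathbb Z}_p$-module and in particular is projective over $\hat{\mathbb Z}_p$. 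Applying Corollary 2.4(i) once more, this time with $R=\hat{\mathbb Z}_p$ and with $B^\ast$ occupying the ``projective'' slot, yields $\mathrm{Ext}^n_{\hat{\mathbb Z}_pG}(B^\ast,A^\ast)\cong H^n(G,\mathrm{Hom}_{\hat{\mathbb Z}_p}(B^\ast,A^\ast))$ for all $n\ge 0$.

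It remains only to match the two coefficient modules. Lemma 2.7(i) provides an isomorphism of abelian groups $\mathrm{Hom}_{\mathbb Z}(A,B)\cong \mathrm{Hom}_{\hat{\mathbb Z}_p}(B^\ast,A^\ast)$, given by sending $f\colon A\to B$ to its dual $f^\ast\colon B^\ast\to A^\ast$, $\phi\mapsto\phi\circ f$; a short computation with the definition of the $G$-action on Pontryagin duals recorded just before Lemma 2.7 shows that this map intertwines the two diagonal $G$-actions, i.e. $(g\cdot f)^\ast=g\cdot f^\ast$. Substituting this $\mathbb ZG$-isomorphism into the two cohomology formulas above gives the asserted isomorphism $\mathrm{Ext}^n_{\mathbb ZG}(A,B)\cong \mathrm{Ext}^n_{\hat{\mathbb Z}_pG}(B^\ast,A^\ast)$. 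I do not anticipate a genuine obstacle: once one notices that divisibility of $B$ supplies simultaneously the $\mathbb Z$-injectivity of $B$ and the $\hat{\mathbb Z}_p$-projectivity of $B^\ast$, the rest is formal. The only step demanding a little care is the equivariance check in the last paragraph — keeping track of the inverse in the action $(g\phi)(x)=\phi(g^{-1}x)$ on duals and verifying that precomposition respects the diagonal actions — and that is a routine verification rather than a real difficulty.
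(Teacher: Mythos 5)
Your proposal is correct and follows essentially the same route as the paper: both apply Corollary 2.4(i) twice (once over $\mathbb Z$ using the injectivity of $B$, once over $\hat{\mathbb Z}_p$ using the resulting projectivity of $B^\ast$) and match the coefficient modules via Lemma 2.7(i). The only difference is that you spell out the $G$-equivariance of the isomorphism $\mathrm{Hom}_{\mathbb Z}(A,B)\cong \mathrm{Hom}_{\hat{\mathbb Z}_p}(B^\ast,A^\ast)$, which the paper leaves implicit.
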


\begin{proof} We have the following chain of isomorphisms.

$${\rm Ext}_{\mathbb ZG}^n(A,B)\cong H^n(G,{\rm Hom}_{\mathbb Z}(A,B))\cong H^n(G,{\rm Hom}_{\hat{\mathbb Z}_p}(B^\ast,A^\ast))\cong {\rm Ext}_{\hat{\mathbb Z}_pG}^n(B^\ast,A^\ast).$$
The first and last isomorphisms in this chain are consequences of Corollary 2.4(i). The first arises from the divisibility of the underlying abelian group of $B$ and the last from the resulting projectivity of $B^\ast$ as a $\hat{\mathbb Z}_p$-module. 
\end{proof}

We are now prepared to prove Proposition A. 

\begin{proof}[{\bf Proof of Proposition A}] Without incurring any significant loss of generality, we can assume that $\omega$ contains just a single prime $p$. We begin the proof with the observation that, if $B$ is finite, then ${\rm Ext}_{\mathbb ZG}^n(A,B)$ must be a finite $p$-group for all $n\geq 0$.
This follows immediately from Proposition 2.3(i) and the fact that ${\rm Ext}_{\mathbb Z}^n(A,B)$ is a finite $p$-group for all $n\geq 0$. Combining this observation with Lemma 2.5 permits a further reduction to the case where $B$ is divisible.

In the next two paragraphs, we consider the case where either $A$ is rationally irreducible or the additive group of $A$ is torsion. Under either of these assumptions, there is a finitely generated submodule $A_0$ of $A$ such that the additive group of $A/A_0$ is torsion. Since $\mathbb ZG$ is Noetherian, $A_0$ is of type ${\rm FP}_\infty$. Invoking Lemma 2.6(ii), we deduce that ${\rm Ext}^n_{\mathbb ZG}(A_0,B)$ is a \v{C}ernikov $p$-group for $n\geq 0$.  Now let $P$ be the $p$-torsion subgroup of $A/A_0$. 
 Our intention is to show that  ${\rm Ext}_{\mathbb ZG}^n(P,B)$ is a finite $p$-group for $n\geq 0$, which will yield the conclusion of the proposition.

To establish that ${\rm Ext}_{\mathbb ZG}^n(P,B)$ is a finite $p$-group, we use the fact that ${\rm Ext}_{\mathbb ZG}^n(P, B)$ and ${\rm Ext}_{\hat{\mathbb Z}_pG}^n(B^\ast,P^\ast)$ are isomorphic for all $n\geq 0$. Since $\hat{\mathbb Z}_pG$ is Noetherian and $B^\ast$ is a finitely generated $\hat{\mathbb Z}_pG$-module,  $B^\ast$ has type ${\rm FP}_{\infty}$ as a $\hat{\mathbb Z}_pG$-module. Thus, by Lemma 2.6(i),  ${\rm Ext}_{\hat{\mathbb Z}_pG}^n(B^\ast,P^\ast)$ is finitely generated as a $\hat{\mathbb Z}_p$-module. Suppose that ${\rm Ext}_{\hat{\mathbb Z}_pG}^n(B^\ast,P^\ast)$ is not a finite $p$-group for some $n\geq 0$. Then ${\rm Ext}_{\hat{\mathbb Z}_pG}^n(B^\ast,P^\ast)$ is not bounded as a $\hat{\mathbb Z}_p$-module. According to Proposition 2.1, this means that $P^\ast$ and $B^\ast$ have isomorphic nontrivial $\hat{\mathbb Z}_pG$-submodules. Dualizing a second time, we deduce that $P$ and $B$ have isomorphic infinite $\mathbb ZG$-module quotients, thereby contradicting our hypothesis. Therefore, for each $n\geq 0$, ${\rm Ext}_{\hat{\mathbb Z}_pG}^n(B^\ast,P^\ast)$, and hence ${\rm Ext}_{\mathbb ZG}^n(P, B)$, must be a finite $p$-group. This concludes the argument for the case where $A$ is either rationally irreducible as a module or torsion {\it qua} abelian group.

We handle the general case by inducting on $h(A)$, the case $h(A)=0$ having already been established above. Suppose $h(A)\geq 1$. Let $U$ be a submodule of $A$ with $h(U)$ as large as possible while still remaining less than $h(A)$. It follows from the case proved in the first paragraph that ${\rm Ext}^n_{\mathbb ZG}(A/U, B)$ is a \v{C}ernikov $p$-group for $n\geq 0$. Consequently, the conclusion of the theorem will follow from the inductive hypothesis if we succeed in showing that $U$ does not have a quotient that is isomorphic to a nontrivial quotient of $B$. To accomplish this, suppose that $U$ possesses such a quotient and take $V$ to be a submodule of $U$  such that $U/V\cong W$, where $W$ is a nontrivial quotient of $B$. By the case of the proposition established above, ${\rm Ext}^1_{\mathbb ZG}(A/U,U/V)$ is torsion. Appealing to Proposition~2.2, we deduce that $A$ has a quotient that is isomorphic to a finite extension of a nontrivial quotient of $W$. Lemma 2.5 implies, then, that there is a nontrivial quotient of $A$ that is isomorphic to a quotient of $W$. But the presence of such a quotient contradicts our hypothesis. Therefore, $U$ cannot possess a nontrivial quotient that is isomorphic to a quotient of $B$. This completes the proof of Proposition A.

\end{proof}

\section{The Class $\mathfrak{C}(G, \pi)$}

To prove Theorems A and B, we need to understand the following class of modules. This class is a slight extension of the class with the same designation discussed in \cite{kroploren}. 

\begin{definition} {\rm Assume that $G$ is a group and $\pi$ a set of primes. Let $\mathfrak{C}(G, \pi)$ be the smallest class of $\mathbb ZG$-modules with the following two properties.
\vspace{5pt}

\noindent (i) The class $\mathfrak{C}(G, \pi)$ contains every finite $\mathbb ZG$-module as well as every $\mathbb ZG$-module whose underlying abelian group is torsion-free of finite rank and $\pi$-spectral. 
\vspace{5pt}

\noindent (ii) The class $\mathfrak{C}(G, \pi)$ is closed under forming $\mathbb ZG$-module quotients and extensions.}
\end{definition}

The class $\mathfrak{C}(G, \pi)$ has the following two additional closure properties, which can be established by arguments in the same vein as those for [{\bf 3}, Lemmas 2.4, 2.5]. The basic strategy is to induct on the number of closure operations from (ii) that are necessary to construct the module from ones that are either finite or torsion-free of finite rank and $\pi$-spectral. We leave the small task of supplying complete proofs to the reader. 

\begin{lemma} For any group $G$ and set of primes $\pi$, the class $\mathfrak{C}(G, \pi)$ is closed under forming $\mathbb ZG$-module sections. \nolinebreak \hfill\(\square\)
\end{lemma}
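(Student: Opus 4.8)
Since $\mathfrak{C}(G,\pi)$ is already closed under $\mathbb ZG$-module quotients by property (ii), it suffices to prove that it is closed under passing to $\mathbb ZG$-submodules: if this holds and $C$ is a section of some $A\in\mathfrak{C}(G,\pi)$, say $C\cong S/S'$ with $S'\leq S\leq A$, then $S\in\mathfrak{C}(G,\pi)$ by submodule closure and hence $C\in\mathfrak{C}(G,\pi)$ by (ii). So the plan is to show that every submodule of a module in $\mathfrak{C}(G,\pi)$ again lies in $\mathfrak{C}(G,\pi)$.

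To make the induction precise, I would stratify the class: let $\mathfrak{C}_0$ be the collection of $\mathbb ZG$-modules that are finite or torsion-free of finite rank and $\pi$-spectral as abelian groups, and for $k\geq 0$ let $\mathfrak{C}_{k+1}$ consist of all $\mathbb ZG$-module quotients and all $\mathbb ZG$-module extensions of members of $\mathfrak{C}_k$. Then $\mathfrak{C}(G,\pi)=\bigcup_{k\geq 0}\mathfrak{C}_k$, and it suffices to show by induction on $k$ that every submodule of a module in $\mathfrak{C}_k$ belongs to $\mathfrak{C}(G,\pi)$. For the base case $k=0$: a submodule of a finite module is finite, and a subgroup of an abelian group that is torsion-free of finite rank and $\pi$-spectral is again torsion-free of finite rank and $\pi$-spectral (its quasicyclic sections are sections of the ambient group). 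In either situation the submodule lies in $\mathfrak{C}_0\subseteq\mathfrak{C}(G,\pi)$.

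For the inductive step, suppose $k\geq 1$ and let $B$ be a submodule of $A\in\mathfrak{C}_k$. If $A=A'/A''$ is a quotient of some $A'\in\mathfrak{C}_{k-1}$, then $B=\widetilde B/A''$ where $\widetilde B$ is the preimage of $B$ in $A'$; by the inductive hypothesis $\widetilde B\in\mathfrak{C}(G,\pi)$, and then $B=\widetilde B/A''$ is a quotient of $\widetilde B$, so $B\in\mathfrak{C}(G,\pi)$ by (ii). If instead $A$ sits in an extension $0\to A_1\to A\to A_2\to 0$ with $A_1,A_2\in\mathfrak{C}_{k-1}$, then $B$ sits in the extension $0\to B\cap A_1\to B\to B/(B\cap A_1)\to 0$; here $B\cap A_1$ is a submodule of $A_1$ and $B/(B\cap A_1)\cong (B+A_1)/A_1$ is a submodule of $A/A_1\cong A_2$, so both lie in $\mathfrak{C}(G,\pi)$ by the inductive hypothesis, whence $B\in\mathfrak{C}(G,\pi)$ by (ii). This completes the induction and the proof.

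The argument is essentially bookkeeping; the only point requiring a moment's care is the base case, specifically the verification that the three properties ``torsion-free,'' ``finite rank,'' and ``$\pi$-spectral'' are all inherited by subgroups, which is the place where the definition of $\pi$-spectral (via quasicyclic sections) is used. No genuine obstacle is anticipated beyond organizing the two closure operations into the stratification $\{\mathfrak{C}_k\}$.
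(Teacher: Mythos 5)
Your proof is correct and follows precisely the strategy the paper indicates (inducting on the number of closure operations from condition (ii), which your stratification $\{\mathfrak{C}_k\}$ formalizes). The reduction to submodule-closure, the base-case inheritance of torsion-free, finite-rank, and $\pi$-spectral by subgroups, and the two-case inductive step (quotient vs.\ extension, using the standard intersection/preimage bookkeeping) are exactly the intended argument.
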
 

\begin{lemma} Assume that $G$ is a group and $\pi$ a set of primes. 
Let $A$ and $B$ be $\mathbb ZG$-modules  and, for each $n\geq 0$, regard ${\rm Tor}^{\mathbb Z}_n(A,B)$ as a $\mathbb ZG$-module under the diagonal action. If $A$ and $B$ belong to $\mathfrak{C}(G, \pi)$, then ${\rm Tor}^{\mathbb Z}_n(A,B)$ lies in $\mathfrak{C}(G, \pi)$ for $n\geq 0$. \nolinebreak \hfill\(\square\)
\end{lemma}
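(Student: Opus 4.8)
The plan is to prove the lemma by induction on the total number of closure operations of type~(ii) needed to build $A$ and $B$. To make this precise, write $\mathfrak{C}_0(G,\pi)$ for the class of $\mathbb ZG$-modules that are either finite or torsion-free of finite rank and $\pi$-spectral, and for $k\ge1$ let $\mathfrak{C}_k(G,\pi)$ be the class obtained from $\mathfrak{C}_{k-1}(G,\pi)$ by adjoining all $\mathbb ZG$-module quotients of its members together with all $\mathbb ZG$-modules that are the middle term of a short exact sequence whose other two terms lie in $\mathfrak{C}_{k-1}(G,\pi)$; then $\mathfrak{C}(G,\pi)=\bigcup_k\mathfrak{C}_k(G,\pi)$, and for $M\in\mathfrak{C}(G,\pi)$ put $c(M)=\min\{k:M\in\mathfrak{C}_k(G,\pi)\}$. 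I would first isolate three preliminary facts: (a) a $\mathbb ZG$-submodule of a member of $\mathfrak{C}_k(G,\pi)$ again lies in $\mathfrak{C}_k(G,\pi)$ --- an easy induction on $k$, analogous to the proof of Lemma~3.1, whose base case is that submodules of finite modules are finite and that subgroups inherit torsion-freeness, finiteness of rank, and $\pi$-spectrality; (b) for a short exact sequence of $\mathbb ZG$-modules, the long exact sequences of ${\rm Tor}^{\mathbb Z}_\bullet$ in either variable are exact sequences of $\mathbb ZG$-modules for the diagonal action --- as one sees by computing ${\rm Tor}^{\mathbb Z}$ from a $\mathbb ZG$-module resolution, free over $\mathbb Z$, of whichever of the two modules is held fixed (such resolutions exist because $\mathbb Z$ is a principal ideal domain); and (c) ${\rm Tor}^{\mathbb Z}_n=0$ for $n\ge2$.

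Now I induct on $c(A)+c(B)$. Assume this is positive; by the symmetry ${\rm Tor}^{\mathbb Z}_n(A,B)\cong{\rm Tor}^{\mathbb Z}_n(B,A)$ of $\mathbb ZG$-modules I may assume $c(A)\ge1$. Then $A$ enters the construction at stage $c(A)$ via one of the two adjoining operations, so either $A=A_1/A_0$ for some $A_1$ with $c(A_1)<c(A)$, or there is a short exact sequence $0 \to A' \to A \to A'' \to 0$ with $c(A'),c(A'')<c(A)$. In the first case $A_0$ is a submodule of $A_1$, whence $c(A_0)\le c(A_1)<c(A)$ by fact~(a); applying fact~(b) to $0 \to A_0 \to A_1 \to A \to 0$ shows that, for each $n$, ${\rm Tor}^{\mathbb Z}_n(A,B)$ is obtained by finitely many $\mathbb ZG$-module extensions from $\mathbb ZG$-subquotients of the modules ${\rm Tor}^{\mathbb Z}_m(A_0,B)$ and ${\rm Tor}^{\mathbb Z}_m(A_1,B)$. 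Since $c(A_0)+c(B)$ and $c(A_1)+c(B)$ are both less than $c(A)+c(B)$, the inductive hypothesis puts all of these in $\mathfrak{C}(G,\pi)$, and then Lemma~3.1 together with closure of $\mathfrak{C}(G,\pi)$ under extensions gives ${\rm Tor}^{\mathbb Z}_n(A,B)\in\mathfrak{C}(G,\pi)$. The second case is handled identically, using instead the ${\rm Tor}^{\mathbb Z}$ sequence of $0 \to A' \to A \to A'' \to 0$.

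It remains to treat the base case $c(A)=c(B)=0$, i.e.\ $A$ and $B$ both basic. If one of them, say $A$, is finite, pick $m>0$ with $mA=0$; then multiplication by $m$ annihilates ${\rm Tor}^{\mathbb Z}_n(A,B)$, and, since $B$ has finite rank, $A\otimes_{\mathbb Z}B$ and ${\rm Tor}^{\mathbb Z}_1(A,B)$ are finite, so ${\rm Tor}^{\mathbb Z}_n(A,B)$ is a finite $\mathbb ZG$-module and belongs to $\mathfrak{C}(G,\pi)$. If instead $A$ and $B$ are both torsion-free of finite rank and $\pi$-spectral, then both are flat over $\mathbb Z$, so ${\rm Tor}^{\mathbb Z}_n(A,B)=0$ for $n\ge1$, while $A\otimes_{\mathbb Z}B$ (with the diagonal action) is again torsion-free of finite rank and $\pi$-spectral: torsion-freeness and the bound $r_0(A\otimes_{\mathbb Z}B)\le r_0(A)\,r_0(B)$ are immediate, and $\pi$-spectrality follows from that of $A$ and $B$ by choosing maximal free abelian subgroups $F\le A$ and $F'\le B$, noting that $F\otimes_{\mathbb Z}F'$ is a maximal free abelian subgroup of $A\otimes_{\mathbb Z}B$, and examining the primary decomposition of the torsion group $(A\otimes_{\mathbb Z}B)/(F\otimes_{\mathbb Z}F')$ --- the same computation as the analogous base case in \cite{kroploren}. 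In all cases ${\rm Tor}^{\mathbb Z}_n(A,B)\in\mathfrak{C}(G,\pi)$, which completes the induction.

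I expect the principal difficulty to be organizational rather than computational. It is in making the quotient step close up: the long exact sequence of ${\rm Tor}^{\mathbb Z}$ applied to $0 \to A_0 \to A_1 \to A \to 0$ inescapably introduces the submodule $A_0$ of $A_1$, and the induction succeeds only because of fact~(a), that passing to a submodule of a member of $\mathfrak{C}_k(G,\pi)$ keeps one inside $\mathfrak{C}_k(G,\pi)$. The lone genuinely computational point --- that the tensor product of two torsion-free finite-rank $\pi$-spectral $\mathbb ZG$-modules is again of this type --- is exactly the analogue of a base case already treated in \cite{kroploren} and raises no new issues.
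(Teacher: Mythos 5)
Your argument is correct and follows exactly the strategy the paper indicates (induction on the number of closure operations from Definition~3.1(ii), with the long exact sequence of ${\rm Tor}^{\mathbb Z}$ driving the inductive step and flatness/bounded-exponent considerations handling the base cases), so it is essentially the paper's own approach made explicit. You have also correctly identified the one non-obvious ingredient---that each $\mathfrak{C}_k(G,\pi)$, and not merely the union $\mathfrak{C}(G,\pi)$, is closed under $\mathbb ZG$-submodules---which is precisely what allows the quotient case of the induction to close.
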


Of central importance for our purposes are the following two homological properties of the class $\mathfrak{C}(G, \pi)$. Both of these can be deduced in a manner similar to [{\bf 3}, Lemma 2.6], although for Lemma 3.4 one must invoke Proposition A of the present paper instead of [{\bf 3}, Proposition A].

\begin{lemma} Let $\pi$ be a set of primes and $G$ an abelian group. Assume that $B$ is a $\mathbb ZG$-module whose additive group is torsion-free with finite rank. Suppose further that there are no nontrivial $\mathbb ZG$-submodules of $B$ that are $\pi$-spectral as abelian groups. If $A$ is a $\mathbb ZG$-module in $\mathfrak{C}(G, \pi)$ and $\bar{B}$ a $\mathbb ZG$-module quotient of $B$, then ${\rm Ext}_{\mathbb ZG}^n(A,\bar{B})$ and ${\rm Tor}^{\mathbb ZG}_n(A,\bar{B})$ are bounded $\mathbb Z$-modules for $n\geq 0$. \nolinebreak \hfill\(\square\)
\end{lemma}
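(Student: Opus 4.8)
The plan is to mimic the inductive argument for [{\bf 3}, Lemma 2.6], with Proposition 2.1 playing the role its $R=\mathbb Z$ precursor plays there. The substance lies in the case where $A$ itself is torsion-free of finite rank and $\pi$-spectral; the finiteness built into the definition of $\mathfrak{C}(G,\pi)$ is what will let everything else be reduced to that case together with the trivial case of finite $A$.

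First I would record a structural fact: every $\mathbb ZG$-module $A$ in $\mathfrak{C}(G,\pi)$ admits a finite chain of submodules $0=A_0\leq A_1\leq\cdots\leq A_m=A$ in which each quotient $A_i/A_{i-1}$ is either finite or isomorphic to a $\mathbb ZG$-module quotient of one whose underlying group is torsion-free of finite rank and $\pi$-spectral. This follows from the ``smallest class'' principle: the collection of modules admitting such a chain contains both kinds of generators of $\mathfrak{C}(G,\pi)$, is closed under extensions (concatenate chains), and is closed under quotients (push a chain forward along a surjection, observing that each factor of the image chain is a quotient of the corresponding original factor, and that the two properties ``finite'' and ``quotient of a torsion-free finite-rank $\pi$-spectral module'' are each stable under forming quotients). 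Feeding this chain, one step at a time, into the long exact sequences of ${\rm Ext}^\ast_{\mathbb ZG}(-,\bar B)$ and ${\rm Tor}_\ast^{\mathbb ZG}(-,\bar B)$, and using that an extension of a bounded $\mathbb Z$-module by a bounded one is bounded, reduces the lemma to two cases: (a) $A$ finite; and (b) $A$ a $\mathbb ZG$-module quotient of a module $\tilde A$ that is torsion-free of finite rank and $\pi$-spectral. Case (a) is immediate, since multiplication by the exponent of $A$ annihilates $A$ and hence every ${\rm Ext}^n_{\mathbb ZG}(A,\bar B)$ and ${\rm Tor}_n^{\mathbb ZG}(A,\bar B)$. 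In case (b), writing $A=\tilde A/L$, the submodule $L$ of $\tilde A$ is once more torsion-free of finite rank and $\pi$-spectral, so the long exact sequences attached to $0\to L\to\tilde A\to A\to 0$ further reduce us to proving the conclusion when $A$ itself is torsion-free of finite rank and $\pi$-spectral.

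For that remaining case I would write $\bar B=B/C$ with $C$ a $\mathbb ZG$-submodule of $B$. Being a submodule of $B$, the module $C$ is torsion-free of finite rank and also has no nontrivial $\pi$-spectral $\mathbb ZG$-submodule. Now a nonzero submodule of $A$ is a nonzero torsion-free $\pi$-spectral abelian group (the spectrum of a subgroup is contained in that of the ambient group), so it is isomorphic to no submodule of $B$ and to no submodule of $C$; hence Proposition 2.1 applies with second argument $B$ and with second argument $C$, making ${\rm Ext}^n_{\mathbb ZG}(A,B)$, ${\rm Ext}^n_{\mathbb ZG}(A,C)$, ${\rm Tor}_n^{\mathbb ZG}(A,B)$ and ${\rm Tor}_n^{\mathbb ZG}(A,C)$ bounded for every $n$. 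Applying ${\rm Ext}^\ast_{\mathbb ZG}(A,-)$ and ${\rm Tor}_\ast^{\mathbb ZG}(A,-)$ to $0\to C\to B\to\bar B\to 0$ then exhibits ${\rm Ext}^n_{\mathbb ZG}(A,\bar B)$ as an extension of a submodule of ${\rm Ext}^{n+1}_{\mathbb ZG}(A,C)$ by a quotient of ${\rm Ext}^n_{\mathbb ZG}(A,B)$, and ${\rm Tor}_n^{\mathbb ZG}(A,\bar B)$ as an extension of a submodule of ${\rm Tor}_{n-1}^{\mathbb ZG}(A,C)$ by a quotient of ${\rm Tor}_n^{\mathbb ZG}(A,B)$, so both are bounded.

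The step I expect to require the most care is the first reduction. Since $\mathfrak{C}(G,\pi)$ is defined by closure under quotients and extensions---operations for which the relevant long exact sequences intertwine a module with both its submodules and its quotients---one cannot simply induct on the number of closure operations used to build $A$; it is the chain above (or, equivalently, a length induction that leans on the fact that $\mathfrak{C}(G,\pi)$ is closed under sections) that makes the induction close. Everything after that is a routine diagram chase with Proposition 2.1 and the long exact sequences.
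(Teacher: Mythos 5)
Your proof is correct, and it takes the route the paper intends: the paper does not write out an argument for this lemma but instead points to [{\bf 3}, Lemma 2.6], and what you have produced is exactly the kind of argument that reference supplies—reduce to the generators of $\mathfrak{C}(G,\pi)$ via a filtration, dispose of the finite case trivially, and apply Proposition 2.1 twice (to the pair $(A,B)$ and to $(A,C)$ with $C=\ker(B\to\bar B)$) together with the two long exact sequences.

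Two observations are worth flagging as genuinely careful points in your write-up. First, your structural fact—that every member of $\mathfrak{C}(G,\pi)$ has a finite $\mathbb ZG$-module filtration whose factors are finite or quotients of torsion-free, finite-rank, $\pi$-spectral modules—is proved cleanly by the ``smallest class'' device, and the verification that the filtered class is closed under quotients (push the chain forward, noting each new factor is a quotient of the old one, and that both generating properties are quotient-stable) is exactly right. You are also correct that one \emph{cannot} run a naive induction on the number of closure operations directly on the Ext/Tor statement: when $A=A'/K$, the long exact sequence drags in $K$, whose ``depth'' is not controlled, so either the filtration lemma or the section-closure Lemma~3.1 must intervene; your filtration formulation sidesteps this cleanly and in fact yields Lemma~3.1 as a byproduct, since the filtered class is visibly closed under submodules as well. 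Second, the applications of Proposition~2.1 are legitimate because any nonzero $\mathbb ZG$-submodule of $A$ (in the core case) is torsion-free of finite rank and $\pi$-spectral, hence cannot be isomorphic to a nontrivial submodule of $B$, nor of $C\le B$, by hypothesis; both $B$ and $C$ are torsion-free of finite rank, so the hypotheses of Proposition~2.1 are met on the nose. The final diagram chase along $0\to C\to B\to\bar B\to 0$ is routine and correct for both ${\rm Ext}^n$ and ${\rm Tor}_n$.
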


\begin{lemma} Let $\pi$ be a set of primes and $G$ a finitely generated abelian group. Assume that $B$ is a $\mathbb ZG$-module whose additive group is a \v{C}ernikov $\omega$-group for some set of primes $\omega$. Suppose further that there is no infinite $\mathbb ZG$-module quotent of $B$ that is also a quotient of a $\mathbb ZG$-module that is torsion-free of finite rank and $\pi$-spectral as an abelian group. If $A$ is a $\mathbb ZG$-module in $\mathfrak{C}(G, \pi)$, then, for each $n\geq 0$, ${\rm Ext}_{\mathbb ZG}^n(A, B)$ is a \v{C}ernikov $\omega$-group. \nolinebreak \hfill\(\square\)
\end{lemma}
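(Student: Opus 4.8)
The plan is to mimic the proof of [\textbf{3}, Lemma 2.6] closely, inducting on the number of closure operations from (ii) in the definition of $\mathfrak{C}(G,\pi)$ needed to build $A$ from modules that are either finite or torsion-free of finite rank and $\pi$-spectral. Throughout I fix the finitely generated abelian group $G$, the set of primes $\omega$, and the module $B$ satisfying the hypotheses. The key technical point I will carry along the induction is slightly stronger than the bare assertion: I want to know not just that $\mathrm{Ext}^n_{\mathbb ZG}(A,B)$ is \v{C}ernikov, but that \emph{every} $\mathbb ZG$-module $B'$ satisfying the same hypotheses as $B$ (\v{C}ernikov $\omega$-group, no infinite quotient that is a quotient of a torsion-free-of-finite-rank $\pi$-spectral module) makes $\mathrm{Ext}^n_{\mathbb ZG}(A,B')$ a \v{C}ernikov $\omega$-group — so that the class of admissible second arguments is available throughout the induction. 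The base cases are: (a) $A$ finite, and (b) $A$ torsion-free of finite rank and $\pi$-spectral. For (a), $A$ has a finite free resolution over $\mathbb Z$ by finite-rank free modules after tensoring suitably, but more directly one uses that a finite $\mathbb ZG$-module has type $\mathrm{FP}_\infty$ (as $\mathbb ZG$ is Noetherian), so $\mathrm{Ext}^n_{\mathbb ZG}(A,B)$ is computed from a resolution of $A$ by finitely generated free $\mathbb ZG$-modules; since $B$ is \v{C}ernikov, each $\mathrm{Hom}_{\mathbb ZG}(\mathbb ZG^{k},B)\cong B^{k}$ is \v{C}ernikov, and the subquotients $\mathrm{Ext}^n_{\mathbb ZG}(A,B)$ of these are \v{C}ernikov $\omega$-groups. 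For (b), this is exactly where Proposition A of the present paper is invoked: $A$ has finite rank qua abelian group and, by hypothesis on $B$, $A$ cannot have an infinite $\mathbb ZG$-module quotient isomorphic to a quotient of $B$ — indeed $A$ being torsion-free of finite rank and $\pi$-spectral, any infinite quotient of $A$ that were also a quotient of $B$ would violate the hypothesis on $B$ directly. Hence Proposition A yields that $\mathrm{Ext}^n_{\mathbb ZG}(A,B)$ is a \v{C}ernikov $\omega$-group.

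For the inductive step there are two cases according to the last closure operation used. If $A$ is a quotient $A_1/A_0$ with $A_1\in\mathfrak{C}(G,\pi)$ built with fewer operations, apply the long exact sequence in $\mathrm{Ext}^*_{\mathbb ZG}(-,B)$ to $0\to A_0\to A_1\to A\to 0$. Here I need $A_0\in\mathfrak{C}(G,\pi)$ as well; this is supplied by Lemma 3.2 (closure under sections), and $A_0$ is built with no more operations than $A_1$, so the inductive hypothesis applies to both $A_0$ and $A_1$. The long exact sequence then sandwiches $\mathrm{Ext}^n_{\mathbb ZG}(A,B)$ between $\mathrm{Ext}^{n-1}_{\mathbb ZG}(A_0,B)$ and $\mathrm{Ext}^n_{\mathbb ZG}(A_1,B)$, both \v{C}ernikov $\omega$-groups; since the class of \v{C}ernikov $\omega$-groups is closed under subgroups, quotients and extensions, $\mathrm{Ext}^n_{\mathbb ZG}(A,B)$ is a \v{C}ernikov $\omega$-group. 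If instead $A$ sits in an extension $0\to A'\to A\to A''\to 0$ with $A'$ and $A''$ each in $\mathfrak{C}(G,\pi)$ and built with strictly fewer operations, again use the long exact sequence in $\mathrm{Ext}^*_{\mathbb ZG}(-,B)$: $\mathrm{Ext}^n_{\mathbb ZG}(A,B)$ is an extension of a subgroup of $\mathrm{Ext}^n_{\mathbb ZG}(A',B)$ by a quotient of $\mathrm{Ext}^n_{\mathbb ZG}(A'',B)$, hence \v{C}ernikov $\omega$ by the inductive hypothesis and the closure properties of \v{C}ernikov $\omega$-groups just cited.

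The main obstacle is case (b) of the base, and specifically making sure the hypothesis on $B$ is used in exactly the form Proposition A demands — namely, that $A$ has no infinite $\mathbb ZG$-module \emph{quotient} isomorphic to a quotient of $B$. When $A$ itself is torsion-free of finite rank and $\pi$-spectral, any infinite quotient $\bar A$ of $A$ is still a quotient of a torsion-free-of-finite-rank $\pi$-spectral module (namely $A$), so if $\bar A$ were also a quotient of $B$ this would contradict the hypothesis on $B$; thus the hypothesis of Proposition A is met and the base case goes through. The only other subtlety is bookkeeping in the induction: one must be careful that Lemma 3.2 genuinely produces submodules $A_0$ lying in $\mathfrak{C}(G,\pi)$ with a controlled construction length, so that the induction is well-founded — this is the same bookkeeping used in [\textbf{3}, Lemma 2.6] and presents no new difficulty. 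I would therefore organize the write-up as: state the strengthened claim, dispose of the two base cases (citing Proposition A for (b)), then run the two-case inductive step via the $\mathrm{Ext}$ long exact sequence and the closure properties of the class of \v{C}ernikov $\omega$-groups.
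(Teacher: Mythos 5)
Your proposal is correct and follows essentially the route the paper indicates for Lemma 3.4: an induction on the number of closure operations needed to build $A$ from the base modules, applying Proposition A of the paper to the torsion-free base case (where the hypothesis on $B$ translates directly into the hypothesis Proposition A requires) and the long exact sequence in $\mathrm{Ext}_{\mathbb ZG}^{*}(-,B)$ together with closure of \v{C}ernikov $\omega$-groups under subgroups, quotients and extensions in the inductive step. Two cosmetic points: closure under sections is Lemma 3.1 (not Lemma 3.2, which concerns $\mathrm{Tor}$), and the ``strengthened claim'' allowing the second argument $B'$ to vary is never actually used, since $B$ remains fixed throughout your induction.
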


Our main goal in this section is to venture beyond what was shown in \cite{kroploren} about the contents of $\mathfrak{C}(G, \pi)$ by showing, first, that the class is closed under the integral homology functor. This will then permit us to identify a large class of nilpotent normal subgroups of solvable groups of finite rank  whose integral homology lies within the eponymous class with respect to the ambient group. In order to establish the former property, we will avail ourselves of the results of Breen in \cite{breen} concerning the integral homology of abelian groups. We begin by defining the $m$-fold torsion product of abelian groups, following Breen's notation and terminology.

\begin{definition}{\rm  Assume that $A_1, \dots, A_m$ are abelian groups. For each $i=1, \dots, m$, let ${\bf F}^i$ be a $\mathbb Z$-flat resolution of $A_i$. Then, for $n\geq 0$, we define 

$${\rm Tor}^{\mathbb Z}_n(A_1, \dots, A_m)=H_n({\bf F}^1\otimes_{\mathbb Z} \dots \otimes_{\mathbb Z}{\bf F}^m).$$

\noindent If $m=2$, this group is isomorphic to ${\rm Tor}^{\mathbb Z}_n(A_1, A_2)$ as it is usually defined.} 

\end{definition}

Lemma 3.2 can be generalized to these higher order torsion products.

\begin{lemma} Assume that $G$ is a group and $\pi$ a set of primes. 
Let  $A_1, \dots, A_m$ be $\mathbb ZG$-modules  and, for each $n\geq 0$, regard ${\rm Tor}^{\mathbb Z}_n(A_1, \dots, A_m)$ as a $\mathbb ZG$-module under the diagonal action. If $A_1, \dots , A_m$ belong to $\mathfrak{C}(G, \pi)$, then ${\rm Tor}^{\mathbb Z}_n(A_1, \dots, A_m)$ lies in $\mathfrak{C}(G, \pi)$ for $n\geq 0$. 
\end{lemma}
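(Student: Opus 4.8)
The plan is to reduce the $m$-fold case to the two-fold case already handled in Lemma 3.2, by an induction on $m$ that exploits the associativity of the derived tensor product. The case $m=1$ is trivial (${\rm Tor}^{\mathbb Z}_0(A_1)=A_1$ and the higher ones vanish), and $m=2$ is precisely Lemma 3.2. For the inductive step, suppose the result is known for fewer than $m$ modules. Write $A_1,\dots,A_m$ as $\mathbb ZG$-modules in $\mathfrak{C}(G,\pi)$ and choose, as in Breen's definition, $\mathbb Z$-flat resolutions ${\bf F}^i$ of the $A_i$. One wants to compute $H_n({\bf F}^1\otimes_{\mathbb Z}\cdots\otimes_{\mathbb Z}{\bf F}^m)$ by grouping the first $m-1$ factors together: since ${\bf F}^1\otimes\cdots\otimes{\bf F}^{m-1}$ is a complex of flat $\mathbb Z$-modules with homology ${\rm Tor}^{\mathbb Z}_\ast(A_1,\dots,A_{m-1})$, and ${\bf F}^m$ is flat, a Künneth/hyper-Tor spectral sequence (or the standard spectral sequence of a double complex) gives
$$E^2_{pq}=\bigoplus_{a+b=q}{\rm Tor}^{\mathbb Z}_p\bigl({\rm Tor}^{\mathbb Z}_a(A_1,\dots,A_{m-1}),\,{\rm Tor}^{\mathbb Z}_b(A_m)\bigr)\Longrightarrow {\rm Tor}^{\mathbb Z}_{p+q}(A_1,\dots,A_m).$$
Because $A_m$ has a flat resolution of length at most $1$ over the PID $\mathbb Z$, the index $b$ only takes the values $0$ and $1$, so this collapses to the short exact ``Künneth'' sequences
$$0\to{\rm Tor}^{\mathbb Z}_0\bigl({\rm Tor}^{\mathbb Z}_n(A_1,\dots,A_{m-1}),A_m\bigr)\to {\rm Tor}^{\mathbb Z}_n(A_1,\dots,A_m)\to{\rm Tor}^{\mathbb Z}_1\bigl({\rm Tor}^{\mathbb Z}_{n-1}(A_1,\dots,A_{m-1}),A_m\bigr)\to 0,$$
where all maps are $\mathbb ZG$-module homomorphisms for the diagonal actions (naturality of the Künneth sequence with respect to the $G$-action on each flat resolution).

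With this in hand the argument finishes quickly. By the inductive hypothesis, each ${\rm Tor}^{\mathbb Z}_j(A_1,\dots,A_{m-1})$ lies in $\mathfrak{C}(G,\pi)$; and $A_m\in\mathfrak{C}(G,\pi)$ by assumption. Hence, applying Lemma 3.2 to the pair $\bigl({\rm Tor}^{\mathbb Z}_j(A_1,\dots,A_{m-1}),\,A_m\bigr)$, both outer terms of the short exact sequence — namely ${\rm Tor}^{\mathbb Z}_0(-,A_m)=(-)\otimes_{\mathbb Z}A_m$ and ${\rm Tor}^{\mathbb Z}_1(-,A_m)$ — lie in $\mathfrak{C}(G,\pi)$. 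Since $\mathfrak{C}(G,\pi)$ is closed under extensions by Definition 3.1(ii), the middle term ${\rm Tor}^{\mathbb Z}_n(A_1,\dots,A_m)$ lies in $\mathfrak{C}(G,\pi)$ as well, completing the induction.

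The main technical point requiring care is the equivariance: one must make sure the Künneth short exact sequence is a sequence of $\mathbb ZG$-modules with the diagonal $G$-actions, and that the ``tensored'' $\mathbb Z$-flat resolutions carry compatible $G$-actions inducing the diagonal action on homology. This is routine — one can take, for each $A_i$, the bar-type flat resolution (or any functorial flat resolution) so that $G$ acts on ${\bf F}^i$ lifting its action on $A_i$, make $G$ act diagonally on the tensor product complex, and observe that the double-complex spectral sequence and its edge maps are then $G$-equivariant; alternatively one invokes Proposition 2.3(ii)-style bookkeeping. I do not expect any genuine obstacle here: the one-step-too-long-resolution collapse of the spectral sequence over the PID $\mathbb Z$ is exactly what makes the two-term Lemma 3.2 suffice, and the rest is the closure properties of $\mathfrak{C}(G,\pi)$ already recorded in Definition 3.1 and Lemma 3.2. (One may note that an entirely analogous device underlies [{\bf 3}, Lemma 2.6], so the present lemma is a direct elaboration of that pattern.)
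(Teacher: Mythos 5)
Your proof is correct and takes essentially the same approach as the paper: induction on $m$, the K\"unneth short exact sequence relating ${\rm Tor}^{\mathbb Z}_n(A_1,\dots,A_m)$ to ${\rm Tor}^{\mathbb Z}_\ast(A_1,\dots,A_{m-1})$ and $A_m$, then Lemma~3.2 plus closure of $\mathfrak{C}(G,\pi)$ under extensions. The paper simply cites the K\"unneth theorem from Hilton--Stammbach rather than re-deriving it from the double-complex spectral sequence, but the structure of the argument is identical.
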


\begin{proof}  We proceed by induction on $m$, the case $m=2$ having been disposed of in Lemma 3.2. Suppose $m>2$. By the K\"unneth formula ([{\bf 2}, Theorem V.2.1]), there is a short exact sequence

$$0\longrightarrow {\rm Tor}^\mathbb Z_n(A_1,\cdots A_{m-1})\otimes_{\mathbb Z} A_m\longrightarrow {\rm Tor}^{\mathbb Z}_n(A_1, \cdots, A_m)\longrightarrow {\rm Tor}_1^{\mathbb Z}({\rm Tor}^\mathbb Z_{n-1}(A_1,\cdots A_{m-1}), A_m)\longrightarrow 0$$

\noindent of $\mathbb ZG$-modules for $n\geq 0$. In view of the inductive hypothesis, ${\rm Tor}^\mathbb Z_i(A_1,\cdots A_{m-1})$ belongs to $\mathfrak{C}(G,\pi)$ for all $i\geq 0$.  Invoking Lemma 3.2 and the closure of $\mathfrak{C}(G,\pi)$ under extensions, we deduce that ${\rm Tor}^{\mathbb Z}_n(A_1, \dots, A_m)$ must reside in $\mathfrak{C}(G,\pi)$ for $n\geq 0$.
\end{proof}

Equipped with the preceding lemma, we can apply two of the results in \cite{breen} to obtain our most important closure property for $\mathfrak{C}(G, \pi)$.

\begin{lemma} Assume that $G$ is a group and $\pi$ a set of primes. If $A$ is a $\mathbb ZG$-module that belongs to $\mathfrak{C}(G,\pi)$, then $H_nA$ lies in $\mathfrak{C}(G,\pi)$ for $n\geq 0$. 
\end{lemma}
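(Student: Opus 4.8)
The plan is to express the integral homology $H_nA$ functorially in terms of the $m$-fold torsion products of $A$ with itself, and then to invoke Lemmas 3.1 and 3.7 together with the closure of $\mathfrak{C}(G,\pi)$ under extensions. The guiding principle is that every functorial construction performed on $A$ --- tensor powers, torsion products, the exterior, symmetric and divided power functors, and their derived functors --- inherits a $G$-action from the $\mathbb ZG$-module structure of $A$; that this action is precisely the diagonal action appearing in Definition 3.6 and Lemma 3.7; and that every natural transformation and every natural filtration among such functors is automatically $G$-equivariant. Since $H_nA$ is by definition the integral homology of the abelian group $A$ endowed with the induced $G$-action, it will be enough to exhibit it as being assembled --- by finitely many $\mathbb ZG$-module sections and extensions --- out of $m$-fold torsion products of $A$.

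First I would invoke the first of the two cited results of \cite{breen}, which supplies a finite filtration of the reduced homology $\widetilde{H}_nA$, natural in the abelian group $A$ --- hence $G$-stable when $A$ carries a $\mathbb ZG$-module structure --- whose successive quotients are finite direct sums of the values $L_iF^j(A)$ of the derived functors of a fixed finite list of elementary polynomial functors (the exterior, symmetric and divided power functors $\Lambda$, ${\rm Sym}$, $\Gamma$), with $i,j\le n$. The second cited result of \cite{breen} then identifies each such $L_iF^j(A)$, again naturally in $A$, as a $\mathbb ZG$-module section of a finite direct sum of groups of the form ${\rm Tor}^{\mathbb Z}_q(A,\dots,A)$ with at most $j$ copies of $A$ and $q\le i$. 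Concretely, this last point reflects the facts that $L_iF^j(A)$ is computed by applying $F^j$ (in the Dold--Puppe sense) to a $\mathbb Z$-flat resolution of $A$, that $F^j$ carries a flat module into a section of its $j$-th tensor power, and that the cross-effect decomposition of $F^j$ on direct sums turns the resulting complex into a finite assemblage of iterated tensor products of the flat resolution --- whose homology groups are exactly the $m$-fold torsion products of $A$ of Definition 3.6.

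To finish, I would fix $n\ge 0$. When $n=0$ the conclusion is immediate, since $H_0A=\mathbb Z$ is the trivial module, whose underlying group is torsion-free of finite rank with empty spectrum, hence $\pi$-spectral, so $H_0A\in\mathfrak{C}(G,\pi)$; thus assume $n\ge 1$, so that $\widetilde{H}_nA=H_nA$. Given $A\in\mathfrak{C}(G,\pi)$, Lemma 3.7 places every $\mathbb ZG$-module ${\rm Tor}^{\mathbb Z}_q(A,\dots,A)$ in $\mathfrak{C}(G,\pi)$; a finite direct sum of modules in $\mathfrak{C}(G,\pi)$ arises from them by finitely many extensions, so it again lies in $\mathfrak{C}(G,\pi)$ by Definition 3.1(ii); and a $\mathbb ZG$-module section of a module in $\mathfrak{C}(G,\pi)$ lies in $\mathfrak{C}(G,\pi)$ by Lemma 3.1. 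Hence every value $L_iF^j(A)$, and therefore every successive quotient of the filtration of $H_nA$ provided above, belongs to $\mathfrak{C}(G,\pi)$; since $H_nA$ is an iterated extension of these quotients, Definition 3.1(ii) yields $H_nA\in\mathfrak{C}(G,\pi)$.

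The part I expect to be the genuine work is the faithful transcription of the two results of \cite{breen} into the shape used here. Those results are phrased in terms of the functorial homology of Eilenberg--MacLane objects, so one must check both that only finitely many derived-functor contributions occur in a fixed homological degree and --- the more delicate point --- that the filtration of $H_nA$ and the presentation of each derived functor as a section of torsion products are genuinely natural in $A$. That naturality is exactly what makes the $\mathbb ZG$-module structures coincide with the diagonal ones to which Lemmas 3.1 and 3.7 apply; with it in hand, what remains is the formal bookkeeping recorded above.
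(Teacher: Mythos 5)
Your approach is essentially the paper's: both reduce $H_nA$ to derived functors of polynomial functors via a natural filtration supplied by Breen, embed those derived functors into $m$-fold torsion products of $A$ via a natural map, and then conclude from the closure of $\mathfrak{C}(G,\pi)$ under $\mathbb ZG$-module sections, under $m$-fold torsion products, and under extensions. The paper, however, invokes Breen's results in a tighter form than your sketch suggests: by \cite{breen}, (1.10), there is a \emph{natural spectral sequence} with $E^2_{pq}=L_p\Lambda^q A$ converging to $H_nA$, so only the derived exterior power functors intervene, and \cite{breen}, Theorem 4.7, already furnishes a $\mathbb ZG$-module monomorphism $L_p\Lambda^q A\hookrightarrow {\rm Tor}^{\mathbb Z}_p(A,\dots,A)$ ($q$ factors) for $q>0$. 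There is thus no need to bring in symmetric or divided power functors, nor cross-effect decompositions, nor finite direct sums: the filtration of $H_nA$ you postulate is simply the $E^\infty$-filtration of Breen's spectral sequence, with each $E^\infty_{pq}$ a $\mathbb ZG$-module section of $E^2_{pq}$ because the differentials are $\mathbb ZG$-linear. Your separate treatment of $n=0$ is harmless but unnecessary, as the spectral sequence argument covers it ($E^2_{p0}$ is trivial for $p>0$ and $E^2_{00}=\mathbb Z$).
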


\begin{proof} According to [{\bf 1}, (1.10)], there is a natural spectral sequence $\{E^r_{pq}\}$ converging to $H_nA$ such that $E^2_{pq}=L_p\Lambda^qA$ for $p, \ q\geq 0$. Here $L_p\Lambda^qA$ represents the $p$th derived functor of the $q$th exterior power of $A$. Moreover, by [{\bf 1}, Theorem 4.7], there is a $\mathbb ZG$-module monomorphism from $L_p\Lambda^qA$ to ${\rm Tor}^{\mathbb Z}_p(\underbrace{A, \dots, A)}_q$ for $q>0$. It follows, then, from Lemmas 3.5 and 3.1 that $E^2_{pq}$ lies in $\mathfrak{C}(G,\pi)$ for $q>0$. In addition, $E^2_{p0}$, being trivial, belongs to $\mathfrak{C}(G,\pi)$. Since the differentials in Breen's spectral sequence are $\mathbb ZG$-module homomorphisms, $E^\infty_{pq}$ can be regarded as a $\mathbb ZG$-module section of $E^2_{pq}$. Hence $E^\infty_{pq}$ must be a member of $\mathfrak{C}(G,\pi)$ for $p,\ q\geq 0$.  Furthermore, $H_nA$ has a $\mathbb ZG$-module series whose factors are the modules $E^\infty_{pq}$ for $p+q=n$. Therefore, $H_nA$ must belong to $\mathfrak{C}(G,\pi)$ for $n\geq 0$.
\end{proof}

Now we prove the desired result concerning the integral homology of a nilpotent normal subgroup.

\begin{proposition} Assume that $G$ is a group with a nilpotent normal subgroup $N$ of finite rank. Let $\pi={\rm spec}(N)$ and $Q=G/N$. Suppose further that $N$ has a $G$-invariant central series in which each infinite factor can be expressed as a quotient of a $\mathbb ZQ$-module whose underlying abelian group is torsion-free of finite rank and $\pi$-spectral. Then $H_nN$ belongs to $\mathfrak{C}(Q, \pi)$ for $n\geq 0$.

\end{proposition}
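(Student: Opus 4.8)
The plan is to argue by induction on the length $k$ of the given $G$-invariant central series $1=N_0\leq N_1\leq\cdots\leq N_k=N$; to make the induction run smoothly I would in fact prove the statement for an arbitrary set of primes $\pi$ with ${\rm spec}(N)\subseteq\pi$. The first observation is that each factor $N_{i+1}/N_i$ is a $\mathbb ZQ$-module (it is abelian, $N$ centralizes it, and $N_i,N_{i+1}$ are normal in $G$) lying in $\mathfrak{C}(Q,\pi)$: a finite factor lies there by part (i) of the definition, while an infinite factor is, by hypothesis, a $\mathbb ZQ$-module quotient of a module that is torsion-free of finite rank and $\pi$-spectral, hence lies there by parts (i) and (ii). In particular $N_1\in\mathfrak{C}(Q,\pi)$, so Lemma 3.6 gives $H_qN_1\in\mathfrak{C}(Q,\pi)$ for all $q\geq0$. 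The base cases $k\leq1$ are now immediate: for $k=0$ the only nonzero group is $H_0N=\mathbb Z$, which is torsion-free of rank $1$ with empty spectrum and so belongs to $\mathfrak{C}(Q,\pi)$, while for $k=1$ the module $N=N_1$ is in $\mathfrak{C}(Q,\pi)$ and the conclusion is the previous sentence.

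For the inductive step put $Z=N_1$ and $\bar N=N/Z$. Since $Z$ is central in $N$ and normal in $G$, $\bar N$ is a nilpotent normal subgroup of finite rank of $\bar G=G/Z$, with $\bar G/\bar N=Q$, and the subgroups $N_i/Z$ form a $\bar G$-invariant central series of length $k-1$ whose factors are, for $i\geq1$, exactly those of the original series; moreover ${\rm spec}(\bar N)\subseteq{\rm spec}(N)\subseteq\pi$. Thus the inductive hypothesis applies to $\bar G$ and $\bar N$ and yields $H_p\bar N\in\mathfrak{C}(Q,\pi)$ for all $p\geq0$.

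Next I would run the Lyndon--Hochschild--Serre homology spectral sequence of the central extension $1\rightarrow Z\rightarrow N\rightarrow\bar N\rightarrow1$. Because $Z$, $N$ and $\bar N$ are all normal in $G$ and inner automorphisms act trivially on homology, the $G$-action factors through $Q$ and makes this a spectral sequence of $\mathbb ZQ$-modules, $E^2_{pq}=H_p(\bar N,H_qZ)\Rightarrow H_{p+q}N$, with $H_nN$ filtered by $\mathbb ZQ$-submodules. As $Z$ is central in $N$, the group $\bar N$ acts trivially on $H_qZ$, so the universal coefficient theorem supplies a natural, hence $\mathbb ZQ$-equivariant, short exact sequence
$$0\longrightarrow H_p\bar N\otimes_{\mathbb Z}H_qZ\longrightarrow H_p(\bar N,H_qZ)\longrightarrow{\rm Tor}^{\mathbb Z}_1(H_{p-1}\bar N,H_qZ)\longrightarrow0.$$
The outer terms are ${\rm Tor}^{\mathbb Z}_0$ and ${\rm Tor}^{\mathbb Z}_1$ of modules from $\mathfrak{C}(Q,\pi)$, so they lie in $\mathfrak{C}(Q,\pi)$ by Lemma 3.2, whence $E^2_{pq}\in\mathfrak{C}(Q,\pi)$ by closure under extensions. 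Since the differentials are $\mathbb ZQ$-homomorphisms, $E^\infty_{pq}$ is a $\mathbb ZQ$-module section of $E^2_{pq}$ and so lies in $\mathfrak{C}(Q,\pi)$ by Lemma 3.1; finally $H_nN$ has a $\mathbb ZQ$-module series with factors the $E^\infty_{pq}$ for $p+q=n$, and therefore belongs to $\mathfrak{C}(Q,\pi)$ by closure under extensions. This completes the induction.

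The only delicate point is the $\mathbb ZQ$-equivariance underpinning the argument: that the LHS homology spectral sequence of $1\to Z\to N\to\bar N\to1$ may be regarded as a spectral sequence of $\mathbb ZQ$-modules --- which rests on the fact that $N$ acts trivially on the homology of each of its normal subgroups and quotients --- and that the universal coefficient sequence carries the corresponding $\mathbb ZQ$-structure by naturality. Granting this, the proof is just repeated application of the closure properties of $\mathfrak{C}(Q,\pi)$ recorded in Lemmas 3.1, 3.2 and 3.6.
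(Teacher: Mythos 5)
Your proof is correct and follows essentially the same route as the paper: induction on the length of the central series, the LHS homology spectral sequence of $1\to N_1\to N\to N/N_1\to 1$, the universal coefficient sequence for the $E^2$-page, and repeated invocation of the closure properties from Lemmas 3.1, 3.2, and 3.6. Your explicit strengthening of the inductive hypothesis to an arbitrary $\pi\supseteq{\rm spec}(N)$ and the remark that the entire spectral sequence is one of $\mathbb{Z}Q$-modules are points the paper leaves tacit, but they do not change the argument in any material way.
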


\begin{proof} Let $1=N_0<N_1<\dots <N_r=N$ be a central series with the properties described. We prove the proposition by induction on $r$, the case $r=0$ being trivial. Assume $r>0$. By the inductive hypothesis, $H_n(N/N_1)$ is a member of $\mathfrak{C}(Q, \pi)$ for $n\geq 0$.  Moreover, according to Lemma 3.6, $H_nN_1$ lies in  $\mathfrak{C}(Q, \pi)$ for $n\geq 0$.  We now consider the LHS homology spectral sequence associated to the central extension $1\rightarrow N_1\rightarrow N\rightarrow N/N_1\rightarrow 1$. In this spectral sequence, $E_{pq}^2=H_p(N/N_1, H_qN_1)$, and so there is an exact sequence

$$0\longrightarrow H_p(N/N_1)\otimes_{\mathbb Z} H_qN_1\longrightarrow E_{pq}^2\longrightarrow {\rm Tor}_1^{\mathbb Z}(H_{p-1}(N/N_1), H_qN_1)\longrightarrow 0.$$

\noindent An appeal to Lemma 3.2 allows us to deduce that $E_{pq}^2$ lies in $\mathfrak{C}(Q, \pi)$.  Since the differentials in the spectral sequence are $\mathbb ZQ$-module homomorphisms, it follows from Lemma 3.1 that $E_{pq}^\infty$ belongs to $\mathfrak{C}(Q, \pi)$.  As a result, $H_nN$ is a member of $\mathfrak{C}(Q, \pi)$ for $n\geq 0$.

\end{proof}

As an immediate consequence of Proposition 3.7, we have 

\begin{corollary} Assume that $G$ is a group in the class $\mathcal{U}$ and $N$ and $Q$ are as described in the definition of $\mathcal{U}$. Then $H_nN$ belongs to $\mathfrak{C}(Q, \pi)$ for $n\geq 0$.  \nolinebreak \hfill\(\square\)
\end{corollary}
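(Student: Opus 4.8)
The plan is to derive this straight from Proposition 3.7, the only work being to promote the series in the torsion subgroup $T$ that condition (ii) of the definition of $\mathcal{U}$ supplies to a genuine central series of all of $N$, and to reconcile the set of primes $\pi$ occurring in the two statements. Let $N$ and $Q=G/N$ be as in the definition of $\mathcal{U}$ and put $\pi={\rm spec}(G)$. Since $G$ is solvable of finite rank, $N$ is automatically a nilpotent normal subgroup of $G$ of finite rank, so the standing hypotheses of Proposition 3.7 hold; what must be verified is the equality $\pi={\rm spec}(N)$ and the existence of a $G$-invariant, $N$-central series of $N$ whose infinite factors are quotients of $\mathbb{Z}Q$-modules that are torsion-free of finite rank and $\pi$-spectral.

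For the first point, every section of $N$ is a section of $G$, so ${\rm spec}(N)\subseteq\pi$; conversely, for any extension $1\to N\to G\to Q\to 1$ one has ${\rm spec}(G)\subseteq{\rm spec}(N)\cup{\rm spec}(Q)$, and ${\rm spec}(Q)=\emptyset$ because a finitely generated virtually abelian group is Noetherian and hence has no quasicyclic section. Thus $\pi={\rm spec}(N)$, and in particular the phrase ``$\pi$-spectral'' in the definition of $\mathcal{U}$ refers to the same primes as the one occurring in Proposition 3.7.

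For the second point I would start from the $G$-invariant, $N$-central series $1=T_0<\cdots<T_s=T$ furnished by (ii). The quotient $N/T$ is torsion-free nilpotent of finite rank, so it carries a characteristic central series whose factors are torsion-free abelian of finite rank (for instance, the isolator refinement of its upper central series). Pulling this back along $N\twoheadrightarrow N/T$ yields $G$-invariant subgroups $T=M_0<\cdots<M_t=N$ with $M_{i+1}/M_i$ central in $N/M_i$ and torsion-free of finite rank, and splicing produces a $G$-invariant, $N$-central series $1=T_0<\cdots<T_s=T=M_0<\cdots<M_t=N$. On each factor $N$ acts trivially, so each factor is a $\mathbb{Z}Q$-module. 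An infinite factor of the form $M_{i+1}/M_i$ is torsion-free of finite rank and, being a section of $N$, is ${\rm spec}(N)$-spectral, i.e.\ $\pi$-spectral, hence trivially a quotient of a $\mathbb{Z}Q$-module of the required kind; an infinite factor of the form $T_{j+1}/T_j$ is of the required kind by (ii). Proposition 3.7 then gives $H_nN\in\mathfrak{C}(Q,\pi)$ for all $n\geq 0$, which is the assertion.

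The only step that is more than bookkeeping is the spectrum computation $\pi={\rm spec}(N)$, since it is what simultaneously guarantees that the factors coming from $N/T$ lie in the permitted class and that the hypothesis of (ii) matches precisely what Proposition 3.7 requires; granting the standard structural fact that a torsion-free nilpotent group of finite rank admits a characteristic central series with torsion-free factors of finite rank, everything else is immediate.
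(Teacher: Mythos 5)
Your argument is correct and fills in, carefully and completely, the two pieces of bookkeeping that the paper suppresses when it declares Corollary 3.8 to be an ``immediate consequence'' of Proposition 3.7: the identification $\mathrm{spec}(N)=\mathrm{spec}(G)$ (via $\mathrm{spec}(G)\subseteq\mathrm{spec}(N)\cup\mathrm{spec}(Q)$ and the Noetherianity of $Q$), and the extension of the given $N$-central series in $T$ to a $G$-invariant central series of all of $N$ by pulling back the upper central series of the torsion-free quotient $N/T$. This is the same route the paper intends, just spelled out.
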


We conclude this section by highlighting an important type of group that belongs to $\mathcal{U}$.

\begin{proposition}  Every finitely generated solvable minimax group lies in $\mathcal{U}$.
\end{proposition}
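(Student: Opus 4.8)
The plan is to take $N=\mathrm{Fitt}(G)$ and verify conditions (i) and (ii) in the definition of $\mathcal U$ for this choice, writing $Q=G/N$ and $\pi=\mathrm{spec}(G)$.

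Condition (i) is essentially immediate. Since $G$ is a finitely generated solvable minimax group, $\tau(G)$ is a torsion minimax group and hence a \v{C}ernikov group, so Proposition~1.2(ii) applies and tells us that $N=\mathrm{Fitt}(G)$ is nilpotent and $G/N$ virtually abelian; as $G/N$ is also a quotient of the finitely generated group $G$, it is finitely generated. This gives (i). Observe in addition that $N$ is minimax of finite rank and that $\pi$ is a finite set of primes.

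For condition (ii), set $T=\tau(N)$, the torsion subgroup of the nilpotent group $N$; it is \v{C}ernikov and characteristic in $N$, hence normal in $G$. First I would build the series: intersecting the upper central series of $N$ with $T$ gives an $N$-central, $G$-invariant series of $T$ of finite length whose factors are \v{C}ernikov abelian groups carrying $\mathbb ZQ$-module structures (the $\mathbb ZQ$-structure arising because $N$ acts trivially on each factor). Refine this series by inserting, in each factor, its divisible radical (a $\mathbb ZQ$-submodule of finite index) and then the $p$-primary components of that radical (each a $\mathbb ZQ$-submodule); every prime $p$ that occurs lies in $\pi$, since the factor in question is a section of the $\pi$-spectral group $G$. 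After this refinement, proving (ii) reduces to the following claim: each divisible factor $V\cong(\mathbb Z_{p^\infty})^{m}$ appearing in the series ($p\in\pi$, $m\ge1$) is, as a $\mathbb ZQ$-module, an epimorphic image of a $\mathbb ZQ$-module whose underlying abelian group is torsion-free of finite rank and $\pi$-spectral.

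This claim is the crux of the proof. A first reduction is that it suffices to present $V$ as a $\mathbb ZQ$-module quotient of a torsion-free abelian normal section $W$ of $G$ of finite rank: any such $W$, being a section of the $\pi$-minimax group $G$, is automatically $\pi$-spectral. To produce $W$ I would invoke the structure theory of finitely generated solvable minimax groups from \cite{robinson-lennox2}. The point is that every divisible factor $V$ sits inside the divisible radical $R$ of $\tau(G)$ — indeed $R\le\mathrm{Fitt}(G)$ since a divisible abelian normal subgroup of $G$ is nilpotent, and $\tau(N)$ and $\tau(G)$ have the same divisible radical — and that the minimax structure of $G$ constrains the action of $G$ on $R$ enough that $R$, and hence each $V$, is an image of a torsion-free, finite-rank $\mathbb Z[1/\pi]$-module on which $G$ acts compatibly, the kernel being a free abelian subgroup. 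I expect this last step — showing that quasicyclic sections of a finitely generated solvable minimax group cannot exhibit ``transcendental'' behaviour under the $Q$-action, via a careful application of the structure theory of \cite{robinson-lennox2} — to be the genuinely technical part of the argument; everything else is routine manipulation of central series and of the divisible-radical/finite-quotient decomposition of \v{C}ernikov groups. Once $W$ has been obtained, the surjection $W\twoheadrightarrow V$ is the quotient map, condition (ii) holds, and $G\in\mathcal U$.
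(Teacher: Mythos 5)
Your verification of condition (i) and your reduction of condition (ii) to showing that each divisible \v{C}ernikov factor of $T$ is a $\mathbb ZQ$-module quotient of a torsion-free $\pi$-spectral module are sound. But you stop there: the step you yourself identify as ``the genuinely technical part of the argument'' --- that the relevant quasicyclic sections must be images of torsion-free finite-rank $\mathbb ZQ$-modules --- is asserted, not proven. An appeal to ``the structure theory of \cite{robinson-lennox2}'' without a specific argument is exactly the kind of gap that the proposition is meant to close, so this is a genuine missing idea rather than a routine omission.

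The paper's proof supplies the missing mechanism, and it is both more concrete and arranged differently from your sketch. Instead of intersecting the upper central series with $T$ and chasing the divisible radical of $\tau(G)$, the paper works with the lower central series of $N=\mathrm{Fitt}(G)$ and the canonical $\mathbb ZQ$-module epimorphisms $\theta_i\colon N_{\mathrm{ab}}^{\otimes i}\twoheadrightarrow \gamma_iN/\gamma_{i+1}N$. The key observation, which your proposal does not reach, is that $G$ being finitely generated and $Q=G/N$ polycyclic forces $N_{\mathrm{ab}}$ to be finitely generated --- hence Noetherian --- as a $\mathbb ZQ$-module, and a Noetherian $\mathbb ZQ$-module of finite abelian rank is virtually torsion-free as an abelian group. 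The tensor powers $N_{\mathrm{ab}}^{\otimes i}$ then inherit this property, giving the required presentations of the lower central factors directly. This is the single calculation your argument needs; without an analogue of it, the ``transcendental behaviour'' claim remains unsupported. If you wish to keep your upper-central-series framing, you would still have to prove something equivalent to the Noetherian/virtual-torsion-freeness observation before concluding.
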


\begin{proof} Let $G$ be a finitely generated solvable minimax group with spectrum $\pi$. Write $N={\rm Fitt}(G)$ and $Q=G/N$. Let $i$ be a positive integer and consider the canonical $\mathbb ZQ$-module epimorphism $\theta_i: \underbrace{N_{\rm ab}\otimes_{\mathbb Z} \dots \otimes_{\mathbb Z} N_{\rm ab}}_{i}\rightarrow \gamma_iN/\gamma_{i+1}N$. Since $G$ is finitely generated as a group and $Q$ polycyclic, $N_{\rm ab}$ is finitely generated as a $\mathbb ZQ$-module. As a result,  $N_{\rm ab}$ is a Noetherian $\mathbb ZQ$-module and hence virtually torsion-free as an abelian group. Therefore, each factor in the lower central series of $N$ is a homomorphic image of a $\mathbb ZQ$-module whose underlying additive group is virtually torsion-free of finite rank and $\pi$-spectral. This, then, implies that $G$ belongs to $\mathcal{U}$ .

\end{proof}

\section{Theorems A and B}

This section is devoted to the proofs of the main results of the paper, Theorems A and B.  We begin with the former, as it presents the least difficulty. 

\begin{proof}[{\bf Proof of Theorem A}]

As pointed out at the beginning of the proof of [{\bf 3}, Theorem A], the conclusion will follow if we can show that the cohomology and homology groups are bounded. Let $N$ be a nilpotent normal subgroup of $G$ with the two properties described in the definition of $\mathcal{U}$. Put $Q=G/N$. 
First we treat the case where $A$ is rationally irreducible. Under this assumption, $G/C_G(A)$ is virtually abelian due to [{\bf 5}, 3.1.6]. Furthermore, as argued in the proof of [{\bf 5}, 5.2.3], the structure of the multiplicative group of an algebraic number field allows us to conclude that $G/C_G(A)$ is finitely generated. Hence, by replacing $N$ with $N\cap C_G(A)$, we can assume $N<C_G(A)$. Let $G_0$ be a subgroup of $G$ with finite index such that $Q_0=G_0/N$ is abelian. Since $H_nN$ lies in $\mathfrak{C}(Q_0,\pi)$ by Corollary 3.8, we can argue exactly as in the second and third paragraphs of the proof of [{\bf 3}, Theorem A], except that now we invoke Lemma 3.3 rather than [{\bf 3}, Lemma 2.6]. This reasoning allows us to deduce that  $H^n(G_0,A)$ and $H_n(G_0,A)$ are bounded for $n\geq 0$. Thus the same holds for $H^n(G,A)$ and $H_n(G,A)$. 

We handle the general case by inducting on $h(A)$, the case $h(A)=0$ being trivial. Assume $h(A)\geq 1$. Choose $B$ to be a submodule of $A$ with $h(B)$ as large as possible subject to the conditions that $h(B)<h(A)$ and the additive group of $A/B$ is torsion-free. By the inductive hypothesis, $H^n(G,B)$ and $H_n(G,B)$ are finite for $n\geq 0$. Since $A/B$ is rationally irreducible, the conclusion will then follow from the case proved above if we can show that $A/B$ fails to contain a nonzero $\mathbb ZG$-submodule that is torsion-free and $\pi$-spectral as an abelian group. Suppose that $A/B$ contains such a submodule, and let $C$ be a submodule of $A$ properly containing $B$ such that $C/B$ is torsion-free and $\pi$-spectral {\it qua} abelian group. Because $G/C_G(C/B)$ is finitely generated and virtually abelian, we may assume $N<C_G(C/B)$. Now, as in the proof of [{\bf 3}, Theorem A], we put $\Gamma=C\rtimes G$. It is straightforward to deduce that $\Gamma/B$ belongs to the class $\mathcal{U}$.  As a result, $H^2(\Gamma/B,B)$ is finite. This allows us to derive a contradiction by adducing the argument employed at the end of the proof of [{\bf 3}, Theorem A]. Therefore, we are compelled to conclude that $A/B$ fails to possess a nonzero $\mathbb ZG$-submodule that is torsion-free and $\pi$-spectral as an abelian group. This completes the proof of the theorem. 
\end{proof}

Combining Theorem A with Proposition 3.9, we obtain

\begin{corollary} Let $G$ be a finitely generated solvable minimax group with spectrum $\pi$. Assume that $A$ is a $\mathbb ZG$-module whose underlying abelian group is torsion-free and has finite rank. Suppose further that $A$ does not have any nontrivial $\mathbb ZG$-submodules that are $\pi$-spectral as abelian groups. Then $H^n(G,A)$ and $H_n(G,A)$ are finite 
for all $n\geq 0$.  \nolinebreak \hfill\(\square\)

\end{corollary}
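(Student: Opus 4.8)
The plan is to observe that the corollary is an immediate specialization of Theorem A, the only thing to check being that its hypotheses are satisfied. First I would invoke Proposition 3.9, which asserts that every finitely generated solvable minimax group belongs to the class $\mathcal{U}$; in particular the group $G$ of the statement lies in $\mathcal{U}$. A solvable minimax group has finite abelian section rank — each factor of a defining series being cyclic or quasicyclic, hence of $\mathbb Z$-rank at most one, so every abelian section has finite rank — so $G$ is a solvable group of finite rank and the notion of spectrum applies to it, the set $\pi$ in the hypothesis being exactly ${\rm spec}(G)$. The remaining hypotheses on the module $A$ (torsion-free of finite $\mathbb Z$-rank as an abelian group, and with no nontrivial $\mathbb ZG$-submodule that is $\pi$-spectral as an abelian group) are verbatim those of Theorem A. Applying Theorem A to the pair $(G,A)$ yields that $H^n(G,A)$ and $H_n(G,A)$ are finite for all $n\geq 0$, which is precisely the assertion of the corollary.

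For context, recall that the engine behind this is twofold: Proposition 3.9 identifies ${\rm Fitt}(G)$ as a suitable witness $N$ — since $G$ is finitely generated with polycyclic quotient $Q=G/N$, the module $N_{\rm ab}$ is a Noetherian $\mathbb ZQ$-module, hence virtually torsion-free of finite rank, so each lower central factor of $N$ is a $\mathbb ZQ$-quotient of a tensor power of $N_{\rm ab}$ and thus of the required form; meanwhile Theorem A, via Corollary 3.8 (which puts $H_nN$ in $\mathfrak{C}(Q,\pi)$) and Lemma 3.3, forces the $E_2$-pages of the LHS spectral sequences of $1\to N\to G\to Q\to 1$ to be bounded, whence the cohomology and homology are finite. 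Consequently there is no genuine obstacle in the corollary itself: all the work resides in Proposition 3.9 and Theorem A, and the proof reduces to the observation that the hypotheses of the latter hold automatically, so the argument is simply ``combine Proposition 3.9 with Theorem A.''
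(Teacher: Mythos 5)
Your proposal is correct and matches the paper's own argument exactly: the corollary is stated immediately after the sentence ``Combining Theorem A with Proposition 3.9,'' and that is precisely what you do. The additional commentary on why the hypotheses of Theorem A are satisfied and on the mechanism behind Proposition 3.9 is accurate but not required.
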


The necessity of the condition on the submodules of $A$ in Theorem A is immediately apparent if we take $G=C_{\infty}\times C_{\infty}$ and $A=\mathbb Z$, with $G$ acting trivially. In this situation, $H^0(G,A)=\mathbb Z$, $H^1(G,A)=\mathbb Z\oplus \mathbb Z$, and $H^2(G,A)=\mathbb Z$. Rather less obvious is that the hypothesis that $G$ belongs to $\mathcal{U}$ can also not be discarded.  To verify that, we present the example below, which offers a cohomological interpretation of  [{\bf 3}, Example 3.3].

\begin{example} {\rm Let $p$ and $q$ be distinct primes.  Take $Q$ to be an infinite cyclic group, and let $B$ be the $\mathbb ZQ$-module obtained by letting the generator of $Q$ act on $\mathbb Z_{p^\infty}$ by multiplication by $q$. Put $G=B\rtimes Q$.  Let $A$ be the underlying additive group of the ring $\mathbb Z[1/q]$ and endow $A$ with a $\mathbb ZG$-module structure by letting the generator of $Q$ act  by multiplication by $q$ and $B$ centralize $A$.  We will employ the LHS spectral sequence associated to the extension $1\rightarrow B\rightarrow G\rightarrow Q\rightarrow 1$ to compute $H^2(G,A)$. In this spectral sequence,  $E^{11}_2=0$, and the universal coefficient theorem yields $E^{02}_2\cong {\rm Ext}_{\mathbb Z}^1(B,A)^Q$.  It is straightforward to ascertain ${\rm Ext}^1_{\mathbb Z}(B,A)\cong \hat{\mathbb Z}_p$. Therefore, since the diagonal action of $Q$ on ${\rm Ext}^1_{\mathbb Z}(B,A)$ is trivial, we deduce $H^2(G,A)\cong \hat{\mathbb Z}_p$. Thus we have described an example where all the hypotheses of Theorem A are satisfied save the stipulation that $G$ is a member of \ $\mathcal{U}$, and yet the cohomology in dimension two is torsion-free and nontrivial.
}

\end{example}

To lay the groundwork for Theorem B, we prove three lemmas concerning modules that can be expressed as quotients of modules that are torsion-free of finite rank.

\begin{lemma} Suppose that $\pi$ is a set of primes. Let $G$ be a group and $G_0<G$ with $[G:G_0]<\infty$. Assume that $A$ is a $\mathbb ZG$-module that can be expressed as a $\mathbb ZG_0$-module quotient of a $\mathbb ZG_0$-module whose underlying abelian group is torson-free, has finite rank, and is $\pi$-spectral.  Then $A$ can also be realized as a $\mathbb ZG$-module quotient of a $\mathbb ZG$-module whose additive group possesses the same three properties. 
\end{lemma}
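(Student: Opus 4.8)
The natural approach is induction (or Schreier refinement) to reduce to the case where $G_0$ is normal in $G$, followed by a coinduction/induction construction. First, since $[G:G_0]<\infty$, we may replace $G_0$ by its core $\bigcap_{g\in G}G_0^g$, which is still of finite index; passing to a smaller subgroup only strengthens the hypothesis (if $A$ is a $\mathbb ZG_0$-quotient of $P$ with $P$ torsion-free of finite rank and $\pi$-spectral, then restricting along $G_1\le G_0$ exhibits $A$ as a $\mathbb ZG_1$-quotient of $P$ with the same three properties on the underlying abelian group). Hence we may assume $G_0\trianglelefteq G$.

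Now suppose $\theta\colon P\twoheadrightarrow A$ is a surjection of $\mathbb ZG_0$-modules, with the additive group of $P$ torsion-free of finite rank and $\pi$-spectral. The plan is to form the coinduced module $\widehat P=\mathrm{Hom}_{\mathbb ZG_0}(\mathbb ZG, P)$ (equivalently, since the index is finite, the induced module $\mathbb ZG\otimes_{\mathbb ZG_0}P$, which agrees with it up to isomorphism). As an abelian group, $\widehat P$ is a direct sum of $[G:G_0]$ copies of (conjugates of) the additive group of $P$, so it is again torsion-free, of finite rank, and $\pi$-spectral — the spectrum is unchanged because a finite direct sum of $\pi$-spectral groups is $\pi$-spectral. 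The counit of the coinduction adjunction gives a $\mathbb ZG$-homomorphism $\varepsilon\colon \widehat P\to A$, $\varepsilon(f)=f(1)$, whose restriction to the "coordinate at $1$" recovers $\theta$; since $\theta$ is already onto, $\varepsilon$ is onto. Thus $A$ is a $\mathbb ZG$-module quotient of $\widehat P$, which has the three required properties, and we are done.

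The only genuinely delicate point is bookkeeping on the abelian-group level of the (co)induced module: one must check that $\widehat P$, regarded purely as an abelian group, is a finite direct sum of copies of $P$ (via a transversal of $G_0$ in $G$), hence torsion-free of finite rank, and that its spectrum is still contained in $\pi$. This is routine: a transversal $\{t_1,\dots,t_k\}$ gives $\widehat P\cong\bigoplus_{i=1}^k t_iP$ as abelian groups, each summand isomorphic to $P$ as an abelian group, and $\mathrm{spec}$ of a finite direct sum is the union of the spectra. The surjectivity of the counit and its $\mathbb ZG$-linearity are formal consequences of the adjunction, so no real obstacle arises there. I would present the argument in exactly this order: (1) reduce to $G_0$ normal by passing to the core, noting the hypothesis is inherited by subgroups; (2) build the (co)induced module and record its additive structure; (3) produce the counit and check surjectivity.
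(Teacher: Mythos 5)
Your argument is correct, but it takes a genuinely different and more economical route than the paper's. The paper proves this lemma by a group-theoretic construction: it passes to the semidirect products $\Gamma = A\rtimes G$, $\Omega_0 = M_0\rtimes G_0$, covers $\Gamma$ by a free group $F$, lifts $\theta_0\colon F_0\to\Gamma_0$ through $\phi_0^\ast\colon\Omega_0\to\Gamma_0$ using freeness of $F_0 = \theta^{-1}(\Gamma_0)$, and then intersects the (finitely many) conjugates of the kernel $K_0$ of that lift to manufacture an $F$-invariant subgroup $L$ with $N/L$ torsion-free of finite rank and $\pi$-spectral; this $N/L$ then serves as the desired $\mathbb ZG$-module surjecting onto $A$. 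Your proposal bypasses all of this by simply taking the induced module $\mathbb ZG\otimes_{\mathbb ZG_0}P$ and invoking the tensor--hom adjunction (Frobenius reciprocity): the $\mathbb ZG_0$-epimorphism $\theta\colon P\to\mathrm{Res}_{G_0}A$ corresponds to the $\mathbb ZG$-map $\hat\theta\colon\mathbb ZG\otimes_{\mathbb ZG_0}P\to A$, $g\otimes p\mapsto g\,\theta(p)$, which is clearly onto since $\theta$ is. The additive-group bookkeeping is exactly as you say: a transversal gives $\mathbb ZG\otimes_{\mathbb ZG_0}P\cong P^{[G:G_0]}$ as abelian groups, and a finite direct sum of torsion-free, finite-rank, $\pi$-spectral groups retains all three properties (spectrum of a finite direct sum is the union of the spectra). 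Your version is shorter, avoids presentations entirely, and makes the role of finite index completely transparent.

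Two small remarks on the write-up. First, your formula $\varepsilon(f)=f(1)$ is the counit of the \emph{restriction--coinduction} adjunction and is only $\mathbb ZG_0$-linear, so as stated $\varepsilon$ does not land in $\mathrm{Hom}_{\mathbb ZG}(\widehat P,A)$. What you actually want is the induced module together with the adjunct $\hat\theta$ described above (or, if you insist on the coinduced model, you must first transport $\hat\theta$ across the finite-index isomorphism $\mathrm{Ind}\cong\mathrm{Coind}$, which changes the formula). This is purely cosmetic and does not affect the substance. Second, the initial reduction to $G_0\trianglelefteq G$ via the normal core is correct but unnecessary for the induction argument: $\mathbb ZG\otimes_{\mathbb ZG_0}P$ and the adjunction work for any finite-index subgroup, normal or not. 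Keeping the reduction is harmless, but dropping it shortens the proof further.
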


\begin{proof} Assume that  $M_0$ is a $\mathbb ZG_0$-module that is torsion-free of finite rank and $\pi$-spectral as an abelian group, and that there is a $\mathbb ZG_0$-module epimorphism $\phi_0: M_0\to A$. Set $\Gamma=A\rtimes G$, $\Gamma_0=A\rtimes G_0$, and $\Omega_0=M_0\rtimes G_0$. Moreover, let $\phi_0^\ast: \Omega_0\to \Gamma_0$ be the epimorphism induced by $\phi_0$. In addition, suppose that $\theta:F\to \Gamma$ is a group epimorphism such that $F$ is a free group, and put $F_0=\theta^{-1}(\Gamma_0)$ and $N=\theta^{-1}(A)$. Since $F_0$ is free, there is a homomorphism $\psi_0: F_0\to \Omega_0$ that makes the diagram

\begin{displaymath}\xymatrix{
&\Omega_0  \ar[r]^{\phi_0^\ast} &\Gamma_0\\
& F_0 \ar[u]_{\psi_0}  \ar[ur]_{\theta_0} &}
\end{displaymath}
commute, where $\theta_0:F_0\to \Gamma_0$ is the epimorphism induced by $\theta$. Setting $K={\rm Ker}\ \theta$ and $K_0={\rm Ker}\ \psi_0$, we have the chain of subgroups $N'< K_0<K<N<F_0<F$. Now let $L=\bigcap_{f\in F} K_0^f$. Then $L\lhd F$ and $N'<L<K_0$.  Because $[F:F_0]$ is finite, there are only finitely many distinct conjugates of $K_0$ in $F$. Furthermore, for each $f\in F$, $N/K_0^f$ is a torsion-free abelian group of finite rank that is $\pi$-spectral. As a result, $N/L$ must enjoy the same properties. In addition, $K$ acts trivially on $N/L$ by conjugation. This means that we can regard $N/L$ as a $\mathbb Z\Gamma$-module, and hence as a $\mathbb ZG$-module via the map $G\to \Gamma$. Denoting this $\mathbb ZG$-module by $M$, we then have a $\mathbb ZG$-module epimorphism $\phi:M\to A$ induced by $\theta$. The lemma has now been proven. 
\end{proof}

\begin{lemma} Suppose that $\pi$ is a set of primes. Let $G$ be a group and $G_0<G$ with $[G:G_0]<\infty$. Assume that $A$ is a $\mathbb ZG$-module and $B$ a $\mathbb ZG_0$-submodule of $A$. Suppose further that $B$ can be expressed as a quotient of a $\mathbb ZG_0$-module whose underlying abelian group is torson-free, has finite rank, and is $\pi$-spectral.  Then the $\mathbb ZG$-submodule of $A$ generated by $B$ can be realized as a $\mathbb ZG$-module quotient of a $\mathbb ZG$-module whose additive group enjoys the same three properties.
\end{lemma}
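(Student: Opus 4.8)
The plan is to realize $\bar{B}$, the $\mathbb{Z}G$-submodule of $A$ generated by $B$, as the image of an induced module. Fix a $\mathbb{Z}G_0$-module $M_0$ whose underlying abelian group is torsion-free, of finite rank, and $\pi$-spectral, together with a $\mathbb{Z}G_0$-module epimorphism $\phi_0\colon M_0\to B$. I would then consider the induced $\mathbb{Z}G$-module $M=\mathbb{Z}G\otimes_{\mathbb{Z}G_0}M_0$. Because $[G:G_0]$ is finite, $\mathbb{Z}G$ is free of finite rank as a right $\mathbb{Z}G_0$-module, a basis being furnished by a left transversal of $G_0$ in $G$; hence the additive group of $M$ is isomorphic to a finite direct sum of copies of that of $M_0$. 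Consequently the underlying abelian group of $M$ is again torsion-free, of finite rank, and $\pi$-spectral.

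Next I would define $\phi\colon M\to A$ by $\phi(g\otimes m)=g\,\phi_0(m)$, extended $\mathbb{Z}$-linearly. This is well defined because $\phi_0$ is a $\mathbb{Z}G_0$-homomorphism, and it is plainly a homomorphism of $\mathbb{Z}G$-modules. Its image is $\sum_{g\in G}g\,\phi_0(M_0)=\sum_{g\in G}gB$, and this subgroup is precisely the $\mathbb{Z}G$-submodule of $A$ generated by $B$: it is a subgroup of $A$, it is stable under the $G$-action, it contains $B$, and any $\mathbb{Z}G$-submodule containing $B$ must contain every $gB$ and hence their sum. Thus $\phi$ is a $\mathbb{Z}G$-epimorphism from $M$ onto $\bar{B}$, and the lemma follows.

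As an alternative, one can instead deduce the statement from Lemma 4.2. After passing to a normal subgroup of $G$ of finite index contained in $G_0$ (which is harmless, since restricting $\phi_0$ shows that $B$ is still a quotient of a suitable module over the smaller group, and the $\mathbb{Z}G$-submodule of $A$ generated by $B$ is unchanged), one may assume $G_0\lhd G$. Then for each left coset representative $g_i$ the subgroup $g_iB$ is a $\mathbb{Z}G_0$-submodule of $A$, isomorphic as an abelian group to $B$ and hence torsion-free of finite rank and $\pi$-spectral; moreover $g_iB$ is a $\mathbb{Z}G_0$-module quotient of the $g_i$-conjugate of $M_0$, which has the same underlying abelian group as $M_0$. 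Summing over the (finitely many) $i$ exhibits $\bar{B}=\sum_i g_iB$ as a $\mathbb{Z}G_0$-module quotient of a finite direct sum of such modules, and Lemma 4.2 then upgrades this to a $\mathbb{Z}G$-module quotient of a module of the required kind.

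I do not anticipate any genuine obstacle here; the argument is essentially bookkeeping. The only points that warrant care are the identification of the image of $\phi$ with the $\mathbb{Z}G$-submodule generated by $B$, and, in the second approach, the verification that each $g_iB$ is actually a $\mathbb{Z}G_0$-submodule of $A$ — which is exactly where the reduction to a normal subgroup of finite index is used.
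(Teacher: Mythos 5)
Your first argument is correct and takes a genuinely cleaner route than the paper. The paper proves this lemma in two steps: it first reduces to the case $G_0\lhd G$, exhibits $\bar{B}=\sum_i g_iB$ as a $\mathbb{Z}G_0$-module quotient of a direct sum $M_1\oplus\dots\oplus M_r$ of conjugate-twisted copies of $M_0$, and then invokes the \emph{preceding} lemma (the one upgrading a $\mathbb{Z}G_0$-quotient of a $\mathbb{Z}G$-module to a $\mathbb{Z}G$-quotient, itself proved by a somewhat involved free-group argument) to pass from the $G_0$-level to the $G$-level. Your induced-module argument bypasses both steps: since $[G:G_0]<\infty$, the module $\mathbb{Z}G\otimes_{\mathbb{Z}G_0}M_0$ is already a $\mathbb{Z}G$-module whose additive group is a finite direct sum of copies of that of $M_0$ (hence torsion-free, finite rank, $\pi$-spectral), and the adjunction $\mathrm{Hom}_{\mathbb{Z}G}(\mathbb{Z}G\otimes_{\mathbb{Z}G_0}M_0,A)\cong\mathrm{Hom}_{\mathbb{Z}G_0}(M_0,A)$ turns $\phi_0$ directly into a $\mathbb{Z}G$-map whose image is $\sum_{g}gB=\bar{B}$. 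This eliminates the reduction to a normal subgroup, eliminates the verification that each $g_iB$ is a $\mathbb{Z}G_0$-submodule, and makes no use of the previous lemma in the section. The paper's approach has the modest advantage of staying within the circle of ideas (twisted modules and the $G_0$-to-$G$ upgrade lemma) that is already needed elsewhere in \S 4; your approach is shorter and more conceptual. Your second, alternative argument reproduces the paper's proof essentially verbatim, including the correct observation about where normality of $G_0$ is used.
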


\begin{proof}  Since $G_0$ contains a $G$-invariant subgroup of finite index, it suffices to consider the case where $G_0\lhd G$. Let $g_1,\dots, g_r$ be a complete list of coset representatives of $G_0$ in $G$.  Then $\bar{B}=g_1B+\dots +g_r B$ is the $\mathbb ZG$-submodule of $A$ generated by $B$. According to our hypotheses, there is a $\mathbb ZG_0$-module $M$ and a $\mathbb ZG_0$-module epimorphism $\phi:M\to B$. Now, for each $i=1, \dots, r$, we construct a new $\mathbb ZG_0$-module $M_i$ with the same underlying abelian group as $M$. The new action, denoted $\circ_i$ for each $i$, is defined in terms of the old action as follows: $g_0\circ_i x=g_0^{g_i}x$ for all $x\in M_i$ and $g_0\in G_0$. Next set $\bar{M}=  M_1\oplus \dots \oplus M_r$, and define the map $\bar{\phi}:\bar{M}\to \bar{B}$ by
$$\bar{\phi}(x_1,\dots, x_r)=g_1\phi (x_1)+\dots + g_r\phi(x_r)$$
for all $(x_1, \dots, x_r)\in \bar{M}$. It is easily checked that $\bar{\phi}$ is an $\mathbb ZG_0$-module epimorphism. Therefore, the conclusion of the lemma follows by applying Lemma 4.1.

\end{proof}

\begin{lemma} Assume that $\pi$ is a set of primes.  Let $G$ be a group and $A$ a $\mathbb ZG$-module that is divisible as an abelian group. Suppose that $A$ has a finite submodule $B$, and that there is an epimorphism $\theta: U\to A/B$, where $U$ is a $\mathbb ZG$-module whose additive group is torsion-free of finite rank and $\pi$-spectral. Then the module $A$, too, can be realized as a quotient of a $\mathbb ZG$-module whose additive group is torsion-free of finite rank and $\pi$-spectral.

\end{lemma}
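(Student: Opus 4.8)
The plan is to present $A$ as a quotient of a torsion-free submodule of the pullback of $\theta$ against the projection $A\twoheadrightarrow A/B$, with the divisibility of $A$ entering only at the very last step.

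First I would form the $\mathbb ZG$-module pullback $E$ of $\theta\colon U\to A/B$ and the canonical epimorphism $A\to A/B$; explicitly $E=\{(u,a)\in U\oplus A:\theta(u)=a+B\}$, equipped with the diagonal $G$-action. The two coordinate projections yield a short exact sequence $0\to B\to E\to U\to 0$ together with a $\mathbb ZG$-epimorphism $\rho\colon E\to A$, both surjectivities being immediate from the surjectivity of $\theta$. Since $U$ is torsion-free, the torsion subgroup of $E$ is killed by the projection onto $U$ and hence lies in the copy of $B$; as $B$ is finite, this shows that the torsion subgroup of $E$ is exactly $B$. Thus $E$ is an extension of $U$ by the finite module $B$, so its underlying abelian group has finite rank and is $\pi$-spectral (the spectrum of an extension is contained in the union of the spectra of the submodule and the quotient, and a finite group has empty spectrum). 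However, $E$ itself still carries a nonzero finite torsion subgroup, so it is not yet the module we are after.

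The crucial step is to replace $E$ by its submodule $nE$, where $n$ is a positive integer with $nB=0$ (take $n=1$ if $B=0$). I would check that $nE$ is torsion-free: a torsion element of $nE$ lies in the torsion subgroup $B$ of $E$ and is therefore annihilated by $n$; writing it as $ne$ gives $n^{2}e=0$, so $e$ is a torsion element of $E$, whence $e\in B$ and $ne=0$. Consequently $nE$ is a torsion-free $\mathbb ZG$-module of finite rank which, being a submodule of $E$, is $\pi$-spectral. Divisibility of $A$ now completes the argument: since $\rho$ is a homomorphism with $\rho(E)=A$ we have $\rho(nE)=nA=A$, so $\rho$ restricts to an epimorphism $nE\twoheadrightarrow A$. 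This realizes $A$ as a quotient of the torsion-free, finite-rank, $\pi$-spectral module $nE$, which is precisely the assertion of the lemma.

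I expect the difficulty here to be conceptual rather than computational: one must hit on the right torsion-free module. The natural first attempts — lifting the extension class of $0\to B\to A\to A/B\to 0$ through a torsion-free cover of $B$, or splitting off a finite quotient of $A$ in the spirit of Lemma~2.5 — all stall on ${\rm Ext}$-obstructions that there is no reason to expect to vanish. What makes the route above succeed is that passing from $E$ to $nE$ strips away the finite torsion of the pullback while, by the divisibility of $A$, the epimorphism onto $A$ is unharmed by multiplication by $n$; once this is seen, the proof encounters no real obstacle.
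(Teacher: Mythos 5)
Your proof is correct and takes essentially the same route as the paper's: the paper forms the same pullback module $M$ (your $E$), passes to a torsion-free $\mathbb ZG$-submodule $M_0$ of finite index with $M_0\cap B=0$, and uses divisibility of $A$ to conclude that $M_0$ still maps onto $A$. Your choice $nE$ is simply an explicit instance of such an $M_0$, and your check $\rho(nE)=n\rho(E)=nA=A$ is the same divisibility argument made concrete.
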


\begin{proof}  From the map $\theta: U\to A/B$, we can construct a $\mathbb ZG$-module extension $0\rightarrow B\rightarrow M\rightarrow U\rightarrow 0$ that fits into a commutative diagram of the form

\begin{displaymath} \begin{CD}
0 @>>> B@>>> M @>>> U @>>>0\\
&& @| @VVV @VV{\theta}V &&\\ 
0 @>>> B @>>> A@>>> A/B @>>>0.
\end{CD} \end{displaymath}
Since $M$ is virtually torsion-free, we can select a torsion-free submodule $M_0$ of $M$ with finite index such that $M_0\cap B=0$. The submodule $M_0$, then, is mapped surjectively to $A$, thereby witnessing the property desired. 
\end{proof}

In addition to the preceding three lemmas, the proof of Theorem B depends upon three homological properties. The first two are contained in Lemma 3.4 and Corollary 3.8, respectively; the third is articulated in the following special case of a theorem due to 
Lennox and Robinson (see \cite{robinson-lennox1} and [{\bf 5}, 10.3.6]).

\begin{theorem}{\rm (Lennox and Robinson)} Assume that $\omega$ is a set of primes. Let $N$ be a nilpotent group with finite rank and $A$ a $\mathbb ZN$-module whose underlying additive group is a \v{C}ernikov $\omega$-group. If $A^N$ is finite, then $H^n(N,A)$ is a finite $\omega$-group for $n\geq 0$. \nolinebreak \hfill\(\square\)
\end{theorem}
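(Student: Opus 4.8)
The plan is to reduce the theorem, in stages, to an explicit computation for abelian groups of torsion-free rank at most one, which is where the substance lies; the reduction is effected by primary decomposition, Pontryagin duality, and the Lyndon--Hochschild--Serre (LHS) spectral sequence.

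\emph{Reducing the coefficients.} Since $A$ is \v{C}ernikov it is the direct sum of its finitely many primary components, each of which is a $\mathbb ZN$-submodule with finite fixed-point subgroup, so we may assume $\omega=\{p\}$ and $A$ a \v{C}ernikov $p$-group. Let $D$ be the maximal divisible subgroup of $A$; it is a submodule of finite index, and $D^N\le A^N$ is finite. The cohomology exact sequence attached to $0\to D\to A\to A/D\to 0$ reduces the problem to the cases $A$ finite and $A=D$ divisible. When $A$ is finite, apply the LHS spectral sequence to $1\to C_N(A)\to N\to N/C_N(A)\to 1$, whose quotient is finite; as $C_N(A)$ acts trivially on $A$, the universal coefficient theorem writes $H^q(C_N(A),A)$ in terms of $\mathrm{Hom}$ and $\mathrm{Ext}^1$ of the integral homology groups of $C_N(A)$ into $A$, and since a nilpotent group of finite rank has integral homology of finite rank, these are finite; hence $H^n(N,A)$ is finite, and being annihilated by $|A|$ it is a finite $\omega$-group.

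\emph{Dualizing the divisible case.} Suppose now $A\cong\mathbb Z_{p^\infty}^{(k)}$ as an abelian group. Arguing as in the proof of Lemma~2.9, using Lemma~2.7 and Corollary~2.4(i), one obtains $H^n(N,A)\cong H_n(N,A^\ast)^\ast$ for all $n\ge 0$, where $A^\ast$ is a finitely generated $\hat{\mathbb Z}_pN$-module that is free of rank $k$ over $\hat{\mathbb Z}_p$; moreover $A^N$ is finite if and only if $H_0(N,A^\ast)=(A^\ast)_N$ is finite. So it suffices to prove: if $N$ is nilpotent of finite rank and $M$ is a $\hat{\mathbb Z}_pN$-module that is free of finite rank over $\hat{\mathbb Z}_p$ and has $M_N$ finite, then $H_n(N,M)$ is finite for all $n\ge 0$ (its Pontryagin dual is then a finite $p$-group, hence a finite $\omega$-group). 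Observe that $M_N$ being finite says precisely that $V_N=0$, where $V=M\otimes_{\hat{\mathbb Z}_p}\mathbb Q_p$. I would prove this last assertion by a double induction on the Hirsch rank $h(N)$ and the nilpotency class of $N$, the case $N=1$ being trivial. If $h(N)=0$ then $N$ is torsion, so $N/C_N(M)$ is a torsion subgroup of $\mathrm{GL}_k(\hat{\mathbb Z}_p)$ and hence finite (a suitable principal congruence subgroup of $\mathrm{GL}_k(\hat{\mathbb Z}_p)$ is torsion-free); one then argues as in the finite-coefficient step, using the automatic \v{C}ernikov property of $H_\ast(N,-)$ on \v{C}ernikov modules to pass from ``bounded of finite rank'' to ``finite''. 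If $h(N)\ge 1$ and $N$ is non-abelian, apply the LHS homology spectral sequence to $1\to\gamma_cN\to N\to N/\gamma_cN\to 1$ (with $c$ the class), control the $E^2$-page by the K\"unneth-type sequence exactly as in the proof of Proposition~3.7, and invoke the inductive hypothesis for the two smaller groups. If $N$ is abelian, choose a pure subgroup $N_1\le N$ with $N/N_1$ of torsion-free rank at most one and run the same spectral sequence. The base case is $N$ abelian of torsion-free rank one: there the eigenvalues of $N$ on $V$ are $p$-adic units lying in finite extensions of $\mathbb Q_p$, so a character of $N$ occurring in $V\otimes\overline{\mathbb Q_p}$ has finite Galois orbit and bounded order at each prime $q\ne p$, whence it is trivial as soon as it is trivial on a maximal cyclic subgroup; therefore $V_N=0$ forces every operator $g-1$ to be injective on $M$, which kills $H_n(N,M)$ for $n\ge 2$, gives $H_1(N,M)=M^N=0$, and leaves $H_0(N,M)=M_N$, finite by hypothesis.

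\emph{Main obstacle.} The crux is exactly this torsion-free-rank-one base case, and specifically ruling out the possibility that $V$ has no trivial \emph{quotient} yet does have a nonzero trivial \emph{submodule} (which would make $H_1$ infinite). The integrality of the eigenvalues is what excludes this: such a submodule would correspond to a character of the finite-rank abelian group $N$ that is trivial on a maximal cyclic subgroup but not on $N$, and no such character can appear in a finite-dimensional $\mathbb Q_p$-module, since its values would lie in $\mu_{q^\infty}$ for some prime $q$ and would generate an extension of $\mathbb Q_p$ of infinite degree. The remaining work is bookkeeping inside the two spectral sequences: verifying at each stage that the hypothesis ``$M_N$ finite'' descends to the coefficient modules that occur, and handling those subquotients (such as $M/(g-1)M$) that are no longer $\hat{\mathbb Z}_p$-free — they remain $\hat{\mathbb Z}_p$-modules of finite rank, and any residual $\mathbb Z_{q^\infty}$-action on such a module with $q\ne p$ is necessarily trivial (a divisible torsion subgroup of the relevant automorphism group must vanish), so the quotient group contributes nothing in positive homological degree.
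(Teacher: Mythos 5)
The paper does not prove this theorem: it is presented as a special case of a result of Lennox and Robinson, with citations to [{\bf 4}] and [{\bf 5}, 10.3.6], and the only argument supplied is the remark that follows the statement, explaining that the finite-rank hypothesis on $N$ upgrades the original Lennox--Robinson conclusion (that $H^n(N,A)$ is bounded as a $\mathbb Z$-module) to finiteness, via the observation at the start of the proof of [{\bf 3}, Theorem A]. So you are attempting to prove from scratch something the paper deliberately cites; the two should not be compared as alternative proofs of the same result so much as two different choices about where to put the burden.

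As a proof attempt, your sketch has the right general architecture --- restrict to one prime, split off the finite part, Pontryagin-dualize the divisible part to a finitely generated $\hat{\mathbb Z}_p$-free module, and induct on Hirsch rank and nilpotency class using LHS spectral sequences --- but it does not close. The decisive gap is in the inductive step. When you run the homology spectral sequence for $1\to\gamma_cN\to N\to N/\gamma_cN\to 1$ (or for $1\to N_1\to N\to N/N_1\to 1$ in the abelian reduction), the $E^2$-page has coefficient modules $H_q(\gamma_cN,M)$, and to apply the inductive hypothesis to $N/\gamma_cN$ you must know that $\bigl(H_q(\gamma_cN,M)\bigr)_{N/\gamma_cN}$ is finite for every $q$. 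You dismiss this as ``bookkeeping,'' but it is the entire substance of the theorem: the hypothesis $M_N$ finite gives this only for $q=0$, and for $q>0$ the coefficients are no longer $\hat{\mathbb Z}_p$-free and there is no evident mechanism in your sketch by which the finiteness hypothesis descends to them. Without that mechanism the induction simply does not run. Secondary issues: ``maximal cyclic subgroup'' has no meaning for $N\cong\mathbb Z[1/p]$, and the chain from ``eigenvalues are $p$-adic units in a bounded extension'' to ``$g-1$ is injective on $M$ for every $g\in N$'' is not a valid deduction (the latter is in fact false for some $g$ deep inside a non-finitely-generated rank-one $N$, though harmlessly so once one argues by direct limits rather than pointwise). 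Moreover the dichotomy you highlight as the ``main obstacle'' --- that $V_N=0$ might coexist with $V^N\ne0$ --- is ruled out by elementary linear algebra (joint generalized eigenspace decomposition for commuting operators on a finite-dimensional space over $\overline{\mathbb Q}_p$), so the Galois-theoretic detour there is a red herring that obscures where the real difficulty lies, namely the descent of finiteness through the spectral sequence.
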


In the original statement of Lennox and Robinson's theorem, $N$ is not assumed to have finite rank, which leads to the weaker conclusion that $H^n(N,A)$ is merely bounded as a $\mathbb Z$-module. However, as argued in the first paragraph in the proof of [{\bf 3}, Theorem A], $N$ and $A$ having finite rank means that, if $H^n(N,A)$ is bounded, then it must be finite.

Armed with the results above, we proceed with the proof of our second theorem.

\begin{proof} [{\bf Proof of Theorem B}] 
 
Assume that $N$ is the normal subgroup of $G$ described in the definition of $\mathcal{U}$ and write $Q=G/N$. 
Without any real loss of generality, we may assume that $A$ is divisible {\it qua} abelian group. In this paragraph, we treat the case where, for every subgroup $H$ of $G$ of finite index,   
each proper $\mathbb ZH$-submodule of $A$ is finite. Consider first the situation where $A^N\neq A$. Under this assumption, Lennox and Robinson's result yields that $H^n(N,A)$ is a finite $\omega$-group for $n\geq 0$. It follows, then, from the LHS spectral sequence that $H^n(G,A)$ is a finite $\omega$-group for $n\geq 0$.  Next we suppose $N<C_G(A)$.  Let $G_0$ be a normal subgroup of $G$ with finite index such that $N<G_0$ and $Q_0=G_0/N$ is a finitely generated abelian group.  
We will employ the LHS spectral sequence for the group extension $1\to N\to G_0\to Q_0\to 1$ to investigate $H^n(G_0, A)$. The universal coefficient theorem and Corollary 2.4 give rise to the following chain of isomorphisms for $p, \ q\geq 0$.
\begin{displaymath} H^p(Q_0, H^q(N, A)) \cong H^p(Q_0 , {\rm Hom}_{\mathbb Z}(H_qN, A))\cong {\rm Ext}^p_{\mathbb ZQ_0}(H_qN, A).\end{displaymath}
According to Lemmas 4.4 and 4.5, $A$ fails to have a quotient that is also a quotient of a $\mathbb ZQ_0$-module that is torsion-free of finite rank and $\pi$-spectral as an abelian group. Moreover, Corollary 3.8 implies that $H_qN$ belongs to $\mathfrak{C}(Q_0, \pi)$ for $q\geq 0$. Therefore, by Lemma 3.4, ${\rm Ext}^p_{\mathbb ZQ_0}(H_qN, A)$ is a \v{C}ernikov $\omega$-group for $p,\ q\geq 0$. It follows, then, that $H^n(G_0, A)$ is a \v{C}ernikov $\omega$-group for $n\geq 0$.  Finally, appeals to the LHS spectral sequence for the extension $1\rightarrow G_0\rightarrow G\rightarrow G/G_0\rightarrow 1$ and [{\bf 6}, Lemma 4.3] permit the deduction that $H^n(G,A)$ is a \v{C}ernikov $\omega$-group for $n\geq 0$. 

Now we tackle the general case, proceeding by induction on $c(A)$. As the case $c(A)=0$ is trivial, we suppose $c(A)>0$. Select an additive subgroup $B$ of $A$ with $c(B)$ as large as possible subject to the conditions that $c(B)<c(A)$ and $N_G(B)$ has finite index in $G$. Let $G_1$ be a normal subgroup of $G$ contained in $N_G(B)$ such that $[G:G_1]$ is finite. Notice that our choice of $B$ guarantees that, for any subgroup $H$ of $G_1$ with finite index, every proper $\mathbb ZH$-submodule of $A/B$ is finite. Furthermore, in view of Lemma 4.4 and the inductive hypothesis, $H^n(G_1,B)$ is a \v{C}ernikov $\omega$-group for $n\geq 0$. Hence the conclusion of the theorem will follow  if we manage to establish that $H^n(G_1,A/B)$ is a \v{C}ernikov $\omega$-group for $n\geq 0$. Set $N_1=G_1\cap N$. If $N_1$ fails to act trivially on $A/B$, then we deduce that $H^n(G_1,A/B)$ is a finite $\omega$-group from Lennox and Robinson's theorem.  Suppose that $N_1$ acts trivially on $A/B$. 
Our plan is to show that, in this case, $A/B$ cannot be a $\mathbb ZG_1$-module quotient of a $\mathbb ZG_1$-module whose additive group is torsion-free of finite rank and $\pi$-spectral; then it will follow from the case proved above that $H^n(G_1,A/B)$ is a \v{C}ernikov $\omega$-group for $n\geq 0$, thus yielding the desired conclusion. 

Suppose that $A/B$ is a quotient of a $\mathbb ZG_1$-module whose additive group is torsion-free of finite rank and $\pi$-spectral.
 Let $\Gamma=A\rtimes G_1$. Then the group $\Gamma/B$ plainly belongs to $\mathcal{U}$.  Consequently, by the inductive hypothesis, $H^n(\Gamma/B,B)$ is torsion for $n\geq 0$. That this holds for $n=2$ implies the existence of a subgroup $X$ of $\Gamma$ such that $B\cap X$ is finite and $[\Gamma:BX]<\infty$. Writing $G_2=BX$, we have that $A\cap X$ and $B\cap X$ are $\mathbb ZG_2$-submodules of $A$. Moreover, $A\cap X/B\cap X$ and $A/B$
are isomorphic as $\mathbb ZG_2$-modules. Thus Lemma 4.5 yields that $A\cap X$ is a $\mathbb ZG_2$-module quotient of a $\mathbb ZG_2$-module whose underlying additive group is torsion-free of finite rank and $\pi$-spectral.  In view of Lemma 4.4, this contradicts our hypothesis concerning $A$. Therefore, $A/B$ cannot be a $\mathbb ZG_1$-module quotient of a $\mathbb ZG_1$-module whose additive group is torsion-free of finite rank and $\pi$-spectral. This concludes the proof of the theorem.

\end{proof}

Theorem B has the following corollary.

\begin{corollary}  Let $G$ be a finitely generated solvable minimax solvable group with spectrum $\pi$. Assume that $A$ is a $\mathbb ZG$-module whose underlying abelian group is a \v{C}ernikov $\omega$-group for a set of primes $\omega$. Suppose further that $A$ fails to have an infinite $\mathbb ZG$-submodule that is a quotient of a $\mathbb ZG$-module that is torsion-free of finite rank and $\pi$-spectral as an abelian group. Then, for all $n\geq 0$, $H^n(G,A)$ is a \v{C}ernikov $\omega$-group.  \nolinebreak \hfill\(\square\)
\end{corollary}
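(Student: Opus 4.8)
The plan is to deduce the corollary directly from Theorem B, using Proposition 3.9 to verify the one structural hypothesis that Theorem B imposes on the group. First I would invoke Proposition 3.9, which asserts that every finitely generated solvable minimax group lies in the class $\mathcal{U}$; applied to the given group $G$, this places $G$ in $\mathcal{U}$, and the spectrum $\pi$ named in the corollary is by definition the same set of primes that occurs in the statement of Theorem B (the spectrum of $G$).

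Next I would observe that the hypotheses imposed on the module $A$ in the corollary --- that its underlying abelian group is a \v{C}ernikov $\omega$-group for a set of primes $\omega$, and that $A$ possesses no infinite $\mathbb ZG$-submodule realizable as a quotient of a $\mathbb ZG$-module which is torsion-free of finite rank and $\pi$-spectral as an abelian group --- coincide verbatim with the hypotheses on $A$ in Theorem B. Consequently Theorem B applies word for word and yields that $H^n(G,A)$ is a \v{C}ernikov $\omega$-group for all $n\geq 0$, which is precisely the asserted conclusion.

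Since the entire substance of the argument is carried by Theorem B and Proposition 3.9, both already proved, there is no genuine obstacle here: the corollary is a direct specialization, and the only point worth noting is that the membership $G\in\mathcal{U}$ supplied by Proposition 3.9 is automatically compatible with the spectrum $\pi$ appearing in the corollary, because in both statements $\pi$ is taken to be the spectrum of $G$.
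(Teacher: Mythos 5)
Your proposal is correct and is exactly the intended argument: the paper leaves the corollary proofless (the analogous Corollary 4.1 is introduced with ``Combining Theorem A with Proposition 3.9, we obtain\dots''), and the deduction is the same direct specialization of Theorem B via Proposition 3.9. Nothing further is needed.
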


In a second corollary below, we identify a common situation in which the condition on the submodules of $A$ in Theorem B applies, namely, where the action of the group on the module is transcendental.

\begin{corollary} Let $G$ be a group in the class $\mathcal{U}$ with spectrum $\pi$. Assume that $A$ is a $\mathbb ZG$-module whose additive group is a \v{C}ernikov $\omega$-group for a set of primes $\omega$, and whose proper submodules are all finite. Suppose further that $\phi: G\to {\rm Aut}(A)$ is the homomorphism arising from the action of $G$ on $A$. If, for some $g\in G$, $\phi(g)$ fails to satisfy any polynomial over $\mathbb Z$, then, for $n\geq 0$, $H^n(G,A)$ must be a \v{C}ernikov $\omega$-group.
\end{corollary}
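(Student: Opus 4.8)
The plan is to deduce the corollary directly from Theorem~B by showing that the transcendence assumption forces the hypothesis on the submodules of $A$ in that theorem. Since every proper $\mathbb ZG$-submodule of $A$ is finite, every \emph{infinite} $\mathbb ZG$-submodule of $A$ must coincide with $A$; moreover, as ${\rm Aut}(B)$ is finite for any finite abelian group $B$, the existence of $g\in G$ with $\phi(g)$ satisfying no nonzero integral polynomial forces $A$ to be infinite (and in any case, if $A$ were finite the condition in Theorem~B would be vacuous). Hence it suffices to prove that $A$ is not a $\mathbb ZG$-module quotient of any $\mathbb ZG$-module that is torsion-free of finite rank and $\pi$-spectral as an abelian group; Theorem~B then yields that $H^n(G,A)$ is a \v{C}ernikov $\omega$-group for all $n\geq 0$.

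To this end I would argue by contradiction. Suppose $\rho\colon M\to A$ is a $\mathbb ZG$-module epimorphism with $M$ torsion-free of finite rank (we shall not even need the hypothesis that $M$ is $\pi$-spectral). Because $M$ is torsion-free, the canonical map $M\to V:=M\otimes_{\mathbb Z}\mathbb Q$, $m\mapsto m\otimes 1$, is an injective homomorphism of $\mathbb ZG$-modules, and $V$ is a $\mathbb QG$-module with $\dim_{\mathbb Q}V=r_0(M)$ finite and nonzero (as $A\neq 0$). The element $g$ therefore acts on $V$ as an invertible $\mathbb Q$-linear transformation, so by the Cayley--Hamilton theorem it is annihilated by its characteristic polynomial $\chi\in\mathbb Q[x]$, which is monic of positive degree, hence nonzero. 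Thus $\chi(g)$ acts as the zero endomorphism on $V$, and a fortiori on the submodule $M$.

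Clearing denominators produces a nonzero polynomial $\tilde\chi\in\mathbb Z[x]$ with $\tilde\chi(g)m=0$ for every $m\in M$. Applying $\rho$ gives $\tilde\chi(g)a=\tilde\chi(g)\rho(m)=\rho\bigl(\tilde\chi(g)m\bigr)=0$ for every $a=\rho(m)\in A$, so that $\tilde\chi(\phi(g))$ is the zero element of ${\rm End}_{\mathbb Z}(A)$. This contradicts the assumption that $\phi(g)$ satisfies no nonzero polynomial over $\mathbb Z$. Hence no such $M$ exists, and the corollary follows from Theorem~B.

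There is no serious obstacle here: the argument is a routine reduction together with Cayley--Hamilton and a clearing-of-denominators step. The only points requiring a little care are the observation that finiteness of the proper submodules collapses the condition in Theorem~B to a statement about $A$ alone, and the verification that a $\mathbb Q$-linear polynomial relation on $M\otimes_{\mathbb Z}\mathbb Q$ descends to an integral polynomial relation on the quotient $A$; both are straightforward.
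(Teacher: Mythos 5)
Your proposal is correct and follows essentially the same route as the paper: reduce to the condition in Theorem~B using the fact that all proper submodules are finite, then argue that if $A$ were a quotient of a torsion-free finite-rank module $M$, every $\phi(g)$ would satisfy an integral polynomial. The paper states this last implication as ``plain''; you supply the Cayley--Hamilton/clear-denominators justification, which is a reasonable filling-in of the same idea rather than a different approach.
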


\begin{proof} If $A$ could be realized as a quotient of a $\mathbb ZG$-module whose additive group were torsion-free of finite rank, then $\phi(g)$ would plainly satisfy a polynomial over $\mathbb Z$ for every $g\in G$. Hence it must be impossible to express $A$ as such a quotient. Theorem A, therefore, yields that $H^n(G,A)$ is a \v{C}ernikov $\omega$-group for $n\geq 0$. 
\end{proof}

We conclude the paper with two examples illustrating that, in Theorem B, we cannot dispense with either the restriction on $G$ or the condition on the submodules of $A$.

\begin{example} {\rm Let $p$ and $q$ be distinct primes.  Take $Q$ to be an infinite cyclic group, and let $A$ be the $\mathbb ZQ$-module obtained by letting the generator of $Q$ act on $\mathbb Z_{p^\infty}$ by multiplication by $q$. Set $G=A\rtimes Q$. We may also regard $A$ as a $\mathbb ZG$-module via the epimorphism $G\to Q$. First we claim that $A$ fails to be a quotient of a torsion-free $\mathbb ZG$-module that has finite rank and is $\{p\}$-spectral. To show this, we let $B$ be the underlying additive group of the ring $\mathbb Z[1/q]$ and endow $B$ with a $\mathbb ZQ$-module structure by letting the generator of $Q$ act again by multiplication by $q$. By Proposition 2.3(i), 
$${\rm Ext}_{\mathbb ZQ}^1(A,B)\cong {\rm Ext}^1_{\mathbb Z}(A,B)^Q.$$
Moreover, a simple calculation reveals  ${\rm Ext}^1_{\mathbb Z}(A,B)\cong \hat{\mathbb Z}_p$. Since the diagonal action of $Q$ on ${\rm Ext}^1_{\mathbb Z}(A,B)$ is trivial, this means
${\rm Ext}_{\mathbb ZQ}^1(A,B)\cong \hat{\mathbb Z}_p$. However, according to Lemma 3.3, if $A$ were a quotient of a torsion-free $\{p\}$-minimax $\mathbb ZG$-module, then ${\rm Ext}_{\mathbb ZQ}^1(A,B)$ would have to be bounded. Hence $A$ cannot be a quotient of such a module. 

Now we apply the LHS spectral sequence $\{E^{ij}_r\}$ associated to the group extension $1\rightarrow A\rightarrow G\rightarrow Q\rightarrow 1$ to calculate $H^n(G,A)$ for $n=1,\ 2$.  Since $E^{10}_2=H^1(Q,A)=0$, we deduce $H^1(G,A)\cong {\rm Hom}_{\mathbb Z}(A,A)^Q$. With the diagonal action being trivial, this yields $H^1(G,A)\cong \hat{\mathbb Z}_p$. Next we observe 
$$E^{11}_2\cong H^1(Q, {\rm Hom}_{\mathbb Z}(A,A))\cong \hat{\mathbb Z}_p.$$ Furthermore, invoking the universal coefficient theorem, we obtain $$E^{02}_2\cong {\rm Hom}_{\mathbb Z}(A,A)^Q\cong \hat{\mathbb Z}_p.$$
Hence $H^2(G,A)\cong \hat{\mathbb Z}_p\oplus \hat{\mathbb Z}_p$. 
This example, then, fulfills all  the hypotheses of Theorem B except the condition that $G$ is a member of $\mathcal{U}$, and the cohomology is torsion-free and uncountable in dimensions one and two.

}

\end{example}

\begin{example}{\rm Let $p$ be a prime and $Q=\langle t\rangle$ an infinite cyclic group. Form a $\mathbb ZQ$-module $B_1$ by letting $t$ act on $\mathbb Z[1/p]$ by multiplication by $p$. Moreover, let $B_2$ be the $\mathbb ZQ$-module with the same underlying additive group but where $t$ acts instead by multiplication by $1/p$. Next set $G=(B_1\oplus B_2)\rtimes Q$. Then $G$ is a finitely generated torsion-free solvable minimax group. Take $A$ to be the trivial $\mathbb ZG$-module with underlying abelian group $\mathbb Z_{p^\infty}$. We will employ the LHS spectral sequence $\{E^{ij}_r\}$ associated to the extension  $1\rightarrow B_1\oplus B_2\rightarrow G\rightarrow Q\rightarrow 1$ to compute $H^2(G,A)$. For this purpose, we let $V={\rm Hom}_{\mathbb Z}(\mathbb Z[1/p], A)$, so that $V$ is a vector space over $\mathbb Q$ with dimension equal to the cardinality of the continuum. We thus have $H^1(B_1\oplus B_2,A)\cong V\oplus V$, where $Q$ acts on the first copy of $V$ via  multiplication by $p$ and on the second by multiplication by $1/p$. Under this action, every derivation from $Q$ to $V\oplus V$ is inner, rendering $E^{11}_2=0$. Also, the universal coefficient theorem and K\"unneth formula yield $H^2(B_1\oplus B_2,A)\cong V$, where the action of $Q$ on $V$ is trivial. This means $E^{02}_2\cong V$. Consequently, $H^2(G,A)\cong V$. This example, then, demonstrates the necessity of the condition in Theorem B  on the submodules of $A$. }

\end{example}

\begin{acknowledgement} {\rm The author is greatly indebted to Peter Kropholler for many stimulating and enlightening exchanges concerning solvable groups. In addition, the author would like to express his gratitude to the anonymous referee for a number of salutary suggestions.}
\end{acknowledgement}


\begin{thebibliography}{7}




\bibitem[{\bf 1}]{breen}{\sc L. Breen.} On the functorial homology of abelian groups. {\it J. Pure Appl. Algebra}~{\bf 142} (1999), 199-237. 

\bibitem[{\bf 2}]{hilton}{\sc P. J. Hilton} and {\sc U. Stammbach}. {\it A Course in Homological Algebra}, Second Edition (Springer, 1997). 

\bibitem[{\bf 3}]{kroploren}{\sc P. H. Kropholler} and {\sc K. Lorensen}. The cohomology of virtually torsion-free solvable groups of finite rank.  To appear in {\it Trans. Amer. Math. Soc.}. Final version available on arxiv.org as  arXiv:1303.5005v4.

\bibitem[{\bf 4}]{robinson-lennox1}{\sc J. C. Lennox} and {\sc D. J. S. Robinson}. Soluble products of nilpotent groups. {\it Rend. Sem. Mat. Univ. Padova}~{\bf 62} (1980), 261-80. 

\bibitem[{\bf 5}]{robinson-lennox2}{\sc J. C. Lennox} and {\sc D. J. S. Robinson}. {\it The Theory of Infinite Soluble Groups} (Oxford, 2004).


\bibitem[{\bf 6}]{robinson}{\sc D. J. S. Robinson.} On the cohomology of soluble groups of finite rank. {\it J. Pure Appl. Algebra}~{\bf 43} (1978), 281-287



\bibitem[{\bf 7}]{wilson}{\sc J. S. Wilson.} {\it Profinite Groups.} (Oxford, 2002). 



\end{thebibliography}
\end{document}